\newcommand{\E}{\mathbb{E}}
\newcommand{\R}{\mathbb{R}}
\newcommand{\N}{\mathbb{N}}
\newcommand{\X}{\mathfrak{X}}
\newcommand{\bx}{\mathbf{x}}
\newcommand{\by}{\mathbf{y}}
\newcommand{\bv}{\mathbf{v}}
\newcommand{\bz}{\mathbf{z}}
\newcommand{\dW}{\dot{W}}
\newcommand{\cM}{\mathcal{M}}
\newcommand{\1}{\mathbf{1}}
\newcommand{\bxi}{\pmb{\xi}}
\newcommand{\balpha}{\pmb{\alpha}}
\newcommand{\bbeta}{\pmb{\beta}}
\newcommand{\lgl}{\langle}
\newcommand{\rgl}{\rangle}
\newcommand{\cI}{\mathcal{I}}
\newcommand{\cF}{\mathcal{F}}
\newcommand{\hf}{\widehat{f}}
\newcommand{\hg}{\widehat{g}}
\newcommand{\kH}{\mathfrak{H}}
\newcommand{\bs}{\mathbf{s}}
\newcommand{\bt}{\mathbf{t}}
\newcommand{\br}{\mathbf{r}}
\newcommand{\dmd}{\diamond}
\newcommand{\dT}{\mathbb{T}}
\newcommand{\cH}{\mathcal{H}}
\newcommand{\cB}{\mathcal{B}}
\newcommand{\var}{\mathrm{Var}}
\newcommand{\kX}{\mathfrak{X}}
\newcommand{\cN}{\mathcal{N}}
\newcommand{\cA}{\mathcal{A}}
\newcommand{\bw}{\mathbf{w}}
\providecommand{\keywords}[1]
{
  \textbf{\textit{Keywords:\ }} #1
}
\newtheorem{theorem}{Theorem}[section]
\newtheorem{proposition}[theorem]{Proposition}
\newtheorem{lemma}[theorem]{Lemma}
\newtheorem{corollary}[theorem]{Corollary}
\newtheorem{remark}[theorem]{Remark}
\newenvironment{hyp}[1]
 {\innerhyp}
 {\endinnerhyp}
\begin{document}
\numberwithin{equation}{section}

\title{Quantitative central limit theorems for the parabolic Anderson model driven by colored  noises}

\author{David Nualart\footnote{D. Nualart: University of Kansas, USA; Email: nualart@ku.edu; supported by   NSF Grant DMS 2054735}, Panqiu Xia\footnote{P. Xia (corresponding author): University of Copenhagen, Denmark;  Email: px@math.ku.dk} \, and Guangqu Zheng\footnote{G. Zheng: The University of Edinburgh, UK; Email: zhengguangqu@gmail.com }}

\date{\small\today}

\maketitle
\vspace{-0.7cm}
\begin{abstract}

In this paper, we study the spatial averages of the solution  to the parabolic Anderson model driven by a   space-time Gaussian homogeneous noise that is colored in time and space.  We establish  quantitative central limit theorems (CLT) of this spatial statistics under some mild assumptions, by using the Malliavin-Stein approach.  The highlight of this paper is the obtention of rate of convergence in the colored-in-time  setting, where one can not use   It\^o's calculus due to  the lack of martingale structure. In particular, modulo highly technical computations, we apply a modified version of second-order Gaussian Poincar\'e inequality to overcome this lack of martingale structure and   our work improves the results by Nualart-Zheng (2020 \emph{Electron. J. Probab.}) and Nualart-Song-Zheng (2021 \emph{ALEA, Lat. Am. J. Probab. Math. Stat.}).

\end{abstract}

\keywords{\small Parabolic Anderson model, quantitative central limit theorem,  Stein method, Mallivain calculus, second-order Poincar\'{e} inequality, fractional Brownian motion, Skorohod integral, Dalang's condition.}

\normalsize

\tableofcontents

\section{Introduction}\label{sec_intro}

In the recent  years, the study of  spatial averages of the solution to certain stochastic partial differential equations (SPDEs) has received growing  attention. 
The paper  \cite{HNV20}, being the first of its kind,  investigated the  nonlinear stochastic heat equation on $ \R_+\times \R$
driven by a space-time white noise $\dot{W}$:
 Consider the stochastic heat equation on $\R_+ \times\R^d$ driven by a Gaussian noise $\dot{W}$:
\begin{align}  \label{SHE11}
 \begin{cases} 
 \frac {\partial u}{\partial t} = \frac{1}{2} \Delta u + \sigma(u) \dot{W} \\
  u(0,\bullet)  = 1 
  \end{cases},
\end{align}
where the nonlinearity is encoded into a deterministic Lipschitz continuous function $\sigma: \R\to\R$.
In Duhamel's formulation (mild formulation), this equation is equivalent to 
\begin{align} \label{int_form}
u(t,x)  =1 + \int_0^t \int_{\R} p_{t-s}(x-y) \sigma( u(s,y) ) W(ds,dy),
\end{align}
where the stochastic integral against $W(ds,dy)$ is an extension of the It\^o stochastic integral,
and
 $p_t(x) = (2\pi t)^{-1/2} e^{-  |x|^2 /(2t) }$ for $(t,x)\in\R_+\times\R$ denotes  the heat kernel.
    One of the main results in \cite{HNV20} provides the following estimate. Suppose 
also $\sigma(1)\neq 0$, which excludes the trivial case  $u(t,x) \equiv 1$. Let 
\[
F_R(t) =  \int_{-R}^R ( u(t,x) - 1 ) dx
\]
and let $d_{\rm TV}(X,Y)$ denote the total variation distance between two real random variables $X$ and $Y$ (see \eqref{def_tvd}). Then, it holds  that for any $t>0$, there is some constant $C_t$, that does not depend on $R$, such that the  following quantitative Central  Limit  Theorem (CLT) holds
  \begin{align} \label{SHE_rate}
 d_{\rm TV}\big(  F_R(t)/ \sigma_R(t)  , Z  \big) \leq  C_t  R^{-1/2},
 \end{align}
where $Z\sim \cN(0,1)$ is a standard normal  random variable and $\sigma_R(t) = \sqrt{ \text{Var}( F_R(t) )} > 0$ for each $t,R\in(0,\infty)$. The key ideas to obtain \eqref{SHE_rate} can be roughly summarized as follows:
\begin{itemize}
\item[(i)] By the mild formulation  \eqref{int_form} and applying stochastic Fubini's theorem, one can write
\[
F_R(t) = \int_{[0,t]\times\R} \Big( \int_{-R}^R p_{t-s}(x-y) dx \Big) \sigma( u(s,y) ) W(ds,dy) =: \delta( V_{t,R}  ),
\] 
where $\delta$ denotes the Skorohod integral (the adjoint of the Malliavin derivative operator; see Section \ref{sec_wcpam})  and  $V_{t,R}$ is the random kernel given by
 \[
V_{t,R}(s,y) =   \sigma ( u(s,y) ) \int_{-R}^R p_{t-s}(x-y) dx.
\]

\item[(ii)] Via standard computations, one can obtain $\sigma_R(t) \sim \text{constant}\times  R^{1/2}$ as $R\uparrow\infty$.

\item[(iii)]   The Malliavin-Stein bound (c.f. \cite[Proposition 2.2]{HNV20}), being the  most crucial ingredient, indicates that 
\begin{align}
 d_{\rm TV}\big(  \sigma_R(t)^{-1} F_R(t) , Z  \big) \leq  \frac{2}{\sigma_R(t)^2}   \sqrt{ {\rm Var} \big( \langle DF_R(t), V_{t,R} \rangle_{L^2(\R_+\times\R)}  \big)    }, \label{var_term}
\end{align}
where $DF_R(t)$ denotes the Malliavin derivative of $F_R(t)$, which is a random function and belongs to the space $L^2(\R_+\times\R)$ under the setting of \cite{HNV20}. Then the obtention of \eqref{SHE_rate} follows from a careful analysis of the inner product $\langle DF_R(t), V_{t,R} \rangle_{L^2(\R_+\times\R)} $.
\end{itemize}

We remark here that  for a general nonlinearity $\sigma$, the computations mentioned in points (ii) and  (iii) are made possible through applications of the \emph{Clark-Ocone formula} and \emph{Burkholder-Davis-Gundy inequality}, which are valid only in the white-in-time setting. The noise $\dot{W}$ that is white in time, naturally gives arise to a martingale structure so that It\^o calculus techniques come into the picture and enable the careful analysis of the variance term in \eqref{var_term}.

The above general strategy has also been exploited in several  other papers, see \cite{CKNP19-2,CKNP20,CKNP20-2,HNVZ19,KNP20,KY20} for results on stochastic heat equations and see \cite{BNZ20, DNZ20, NZ20} for results on stochastic wave equations, to name a few. The common feature of these papers is that they consider the case where the driving Gaussian noise is white in time so that the aforementioned strategy of \cite{HNV20} is working very  well. To the best of our knowledge, the colored-in-time setting has only been considered in  \cite{NSZ21,NZ19BM} for heat equations and in \cite{BNQSZ21} for  wave equations. 

\smallskip
In the present paper we are interested in the following parabolic Anderson model  (that means $\sigma(u)=u$)  on $\R_+\times\R^d$ driven by a Gaussian noise  $\dot{W}$, which is colored in time and space, with flat initial condition:
\begin{align}\label{pam}
\begin{cases}
\frac{\partial u}{\partial t}=\frac{1}{2}\Delta u+ u\dmd\dW,\\
u(0, \bullet)=1,
\end{cases}
\end{align}
where $\dmd$ denotes the Wick product (c.f. \cite[Section 6.6]{ws-16-hu}).  Because we allow the noise to be colored in time, we need to take $\sigma(u)=u$, otherwise it is not clear how to show the existence and uniqueness of a solution.

Let us now introduce some notation to better facilitate the discussion as well as  to state our main results.  Fix a positive integer $d$. Heuristically, $\dot{W}= \{ \dot{W}(t,x), (t,x) \in \R_+\times \R^d\}$ will be a centered Gaussian family of random variables with covariance structure given by
\[
\E[ \dot{W}(t,x)\dot{W}(s,y)] = \gamma_0(t-s) \gamma_1(x-y),
\]
where  $\gamma_0, \gamma_1$ are (generalized) functions that satisfy one of the following two conditions:
\begin{hyp}{1}\label{h1}
$(d\geq 1)$
$\gamma_0:\R\to[0,\infty]$ is a nonnegative-definite locally integrable function and $\gamma_1\geq 0$ is the Fourier transform of some nonnegative tempered measure $\mu$ on $\R^d$ {\rm(}called the spectral measure{\rm)}, satisfying Dalang's condition {\rm(}see {\rm \cite{Dalang99})},
\begin{align}\label{D_cond}
 \int_{\R^d} \frac{\mu(d\xi)}{1 + | \xi |^2} <+\infty.
\end{align}
\end{hyp}
\begin{hyp}{2}\label{h2}
 $(d=1)$
There are $H_0\in [\frac{1}{2},1)$ and $H_1\in (0,\frac{1}{2})$ with $H_0+H_1>\frac{3}{4}$, such that
\[
\gamma_0(t)=\begin{cases}
\delta(t),& H_0=\frac{1}{2},\\
|t|^{2H_0-2}, & \frac{1}{2} < H_0<1,
\end{cases}
\]
 where $\delta$ is the Dirac delta function at $0$ and $\gamma_1$ is the Fourier transform of $ \mu(d\xi) = c_{H_1}|\xi|^{1-2H_1}    d\xi$ with $c_{H_1} =  \pi^{-1}\int_{\R} (1- \cos x) |x|^{2H_1-2}dx$; see \eqref{c_a_22} for the choice of $c_{H_1}$.
 
 \end{hyp}
We will call the setting under the Hypothesis \ref{h1} the \textbf{regular case}, since the spatial correlation function $\gamma_1$ is a function as opposed to the setting under Hypothesis \ref{h2}. Oppositely, we call the setting under Hypothesis \ref{h2} the \textbf{rough case}, because the spatial correlation corresponds to fractional Brownian motion with Hurst index $H_1\in (0,1/2)$ (thus rougher than the standard Brownian motion or white noise).

In order to define rigorously the noise, we need some definitions.
Let $C_c^{\infty}(\R_+)$ and $C_c^{\infty}(\R^d)$ denote the set of real smooth functions with compact support  on $\R_+$ and $\R^d$, respectively.  Then, we define Hilbert spaces $\cH_0$ and $\cH_1$ to be the completion of $C_c^{\infty}(\R_+)$ and $C_c^{\infty}(\R^d)$ with respect to the inner products
\[
\lgl \phi_0, \psi_0\rgl_{\cH_0}  =\int_{\R_+^2}dsdt\gamma_0(t-s)\phi_0(t)\psi_0(s)
\]
and
\begin{align*}
\lgl \phi_1, \psi_1\rgl_{\cH_1}  &=\int_{\R^{2d}}dxdy\gamma_1(x-y)\phi_1(x)\psi_1(y)\\
&= \int_{\R^{d}} \mu(d\xi) \widehat{\phi}_1(\xi)\widehat{\psi}_1(-\xi),
\end{align*}
respectively, where $\widehat{\phi}_1(\xi)=\int_{\R^d}dxe^{-i\xi x}\phi_1(x)$  stands for the Fourier transform of $\phi_1$.

Set $\kH=\cH_0\otimes \cH_1$, equipped with the inner product
\begin{align}\label{def_1_real}
\lgl \phi, \psi\rgl_{\kH}  &= \int_{\R_+^2}dsdt \,\gamma_0(t-s)\int_{\R^{2d}}dxdy \gamma_1(x-y)\phi(t,x)\psi(s,y),
\end{align}
which can be also written using the Fourier transform as follows, 
\begin{align} \label{def_2_f}
\lgl \phi, \psi\rgl_{\kH}  &= \int_{\R_+^2}dsdt \,\gamma_0(t-s)\int_{\R^{d}} \mu(d\xi)  \widehat{ \phi}(t, \xi)\widehat{\psi}(s,-\xi),
\end{align}
where $\widehat{\phi}(t,\xi) $ and $\widehat{\psi}(t,\xi) $  stand for the Fourier transform in the space variable. 

We also introduce the following hypotheses that will be used to state our main results.
\begin{hyp}{3a}\label{h3a}
${\displaystyle \int_0^a \int_0^a \gamma_0(r-v)drdv > 0}$ for all $a>0$ and $0<\|\gamma_1\|_{L^1(\R^d)} < \infty$.
\end{hyp}
\begin{hyp}{3b}\label{h3b}
${\displaystyle \int_0^a \int_0^a \gamma_0(r-v)drdv > 0}$ for all $a>0$ and $\gamma_1(z) = |z|^{-\beta}$, $z\in\R^d$ for some $\beta\in (0, 2\wedge d)$.
\end{hyp}
We remark here that the restriction for $\beta$ in Hypothesis \ref{h3b} ensures Dalang's condition \eqref{D_cond}.

With these preliminaries, we consider a centered Gaussian family of random variables
 $W=\{W(h),h\in\kH\}$  with covariance structure 
\[
\E [W(\phi)W(\psi)]=\lgl \phi, \psi\rgl_{\kH}
\]
for all $\phi,\psi \in\kH$. 
The family $W$ is called an
  isonormal Gaussian  process over $\kH$.
 Heuristically, the noise $\dW (t,x_1, \dots ,x_d)=\frac{\partial^{d+1} W(t,x)}{\partial t\partial x_1 \cdots\partial x_d }$  is  the (formal) derivative of $W$ in time and space and the mild formulation of equation \eqref{pam} is  given by   
\begin{align}
u(t,x)  =1 + \int_0^t \int_{\R^d} p_{t-s}(x-y) \sigma( u(s,y) ) W(ds,dy),
\end{align}
where the stochastic integral against $W(ds,dy)$  is a Skorohod integral (see  \cite[Section 1.3.2]{Nualart06}).
  It has been proved  that under either Hypothesis \ref{h1} or \ref{h2}, the parabolic Anderson model \eqref{pam} admits a unique mild solution; see \cite{ ap-17-hu-huang-le-nualart-tindel,spde-17-huang-le-nualart, HHNT15,  SSX20}.

Due to the temporal correlation in time of the driving noise,  we do not have the playground to apply martingale techniques for obtaining central limit theorem for the spatial statistics
\begin{align}\label{def_FRT}
F_R(t) = \int_{B_R}  \big( u(t,x) - 1 \big) dx,
\end{align}
where $B_R=\{x\in \R^d,|x| \leq R\}$. Fortunately, because of the explicit chaos expansion (see \eqref{cos1} and \eqref{cos2}), one can express $  F_R(t)/\sigma_R(t)$, with $\sigma_R^2(t) = {\rm Var}\big(   F_R(t) \big)     $,  as a series of multiple stochastic  integrals.
This series falls into the framework of applying the so-called chaotic CLT. The chaotic CLT roughly means that once we have some control of the tail in the series, it would be enough to show the  convergence of each chaos, which can be further proved by using the fourth moment theorems; see \cite[Section 6.3]{blue} for more details. In fact, in the papers \cite{ NSZ21,NZ19BM}, the authors investigated the Gaussian fluctuations of $F_R(t)$ along this idea and proved the following results. 

\begin{theorem}\label{thm_quality}
 Assume Hypothesis \ref{h1} or \ref{h2}, and let $u$ be the solution to  the parabolic Anderson model \eqref{pam}. Recall the definition of $F_R(t)$ from \eqref{def_FRT} and let $\sigma_R(t) = \sqrt{  {\rm Var}(   F_R(t) )     }$.  Then the following results hold.

{\rm(1)} Assume Hypotheses \ref{h1} and \ref{h3a}. Then, for any fixed $t\in(0,\infty)$,  as $R\uparrow \infty$,
\begin{center}
$\sigma_R(t) \sim R^{d/2}$  and ${\displaystyle \frac{ F_R(t) }{\sigma_R(t) }   } $ converges in law to the standard normal law, 
\end{center}
where $a_R\sim b_R$ means $\displaystyle 0 < \liminf_{R\uparrow \infty} a_R/b_R\leq      \limsup_{R\uparrow \infty}a_R /b_R <+\infty$; see {\rm \cite[Theorem 1.6]{NZ19BM}}.

{\rm(2)}  Assume Hypotheses \ref{h1} and \ref{h3b}. Then, for any fixed $t\in(0,\infty)$,  as $R\uparrow \infty$,
\begin{center}
$\sigma_R(t) \sim R^{d- \frac{\beta}{2}}$  and ${\displaystyle \frac{ F_R(t) }{\sigma_R(t) }    } $ converges in law to the standard normal law;
\end{center}
 see {\rm \cite[Theorem 1.7]{NZ19BM}.}

{\rm(3)}  Under the hypothesis \ref{h2}, it holds for any fixed  $t\in(0,\infty)$ that  as $R\uparrow \infty$,
 \begin{center}
$\sigma_R(t) \sim R^{1/2}$  and ${\displaystyle \frac{ F_R(t) }{\sigma_R(t) }    } $ converges in law to the standard normal law;
\end{center}
see {\rm\cite[Theorem 1.1 and Proposition 1.2]{NSZ21}.}
\end{theorem}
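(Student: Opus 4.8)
Since Theorem~\ref{thm_quality} records results established in \cite{NZ19BM,NSZ21}, I will only outline the proof; the strategy is the \emph{chaotic central limit theorem}. Recall from the Wiener chaos expansion \eqref{cos1}--\eqref{cos2} that
\[
u(t,x)-1=\sum_{n\ge1} I_n\big(f_{t,x,n}\big),
\]
where $I_n$ is the $n$-th multiple Wiener--It\^o integral with respect to $W$ and $f_{t,x,n}$ is the symmetrization in its $n$ space-time arguments of
\[
(s_1,y_1,\dots,s_n,y_n)\longmapsto \mathbf 1_{\{0<s_1<\cdots<s_n<t\}}\, p_{t-s_n}(x-y_n)\,p_{s_n-s_{n-1}}(y_n-y_{n-1})\cdots p_{s_2-s_1}(y_2-y_1).
\]
By linearity and a stochastic Fubini argument, $F_R(t)=\sum_{n\ge1} I_n(g_{R,n})$ with $g_{R,n}:=\int_{B_R} f_{t,x,n}\,dx$, and by orthogonality of the chaoses,
\[
\sigma_R^2(t)=\sum_{n\ge1} n!\,\|g_{R,n}\|_{\kH^{\otimes n}}^2 .
\]
The plan has two parts: (i) pin down the exact order of $\sigma_R^2(t)$ in $R$, and (ii) prove asymptotic normality of $F_R(t)/\sigma_R(t)$.

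For part (i), the lower bound and the positivity $\sigma_R(t)>0$ will already come from the first chaos: here $g_{R,1}(s,y)=\mathbf 1_{[0,t]}(s)\int_{B_R}p_{t-s}(x-y)\,dx$, and evaluating $\|g_{R,1}\|_\kH^2$ through the spectral formula \eqref{def_2_f} together with the scaling $|\widehat{\mathbf 1_{B_R}}(\xi)|^2=R^{2d}\,|\widehat{\mathbf 1_{B_1}}(R\xi)|^2$ --- which localizes mass near $\xi=0$ --- one reads off the order $R^{d}$ under Hypothesis~\ref{h3a} (there $\widehat{\gamma_1}$ is bounded near the origin, since $\gamma_1\in L^1$), the order $R^{2d-\beta}$ under Hypothesis~\ref{h3b} (there the density of $\mu$ is comparable to $|\xi|^{\beta-d}$ near the origin), and the order $R$ in the rough case (where $d=1$ and $\mu(d\xi)=c_{H_1}|\xi|^{1-2H_1}\,d\xi$); the condition $\int_0^a\int_0^a\gamma_0(r-v)\,dr\,dv>0$ guarantees the limiting constant is strictly positive. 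For the matching upper bound $\sigma_R^2(t)\lesssim R^d$ (resp.\ $R^{2d-\beta}$, resp.\ $R$) I would establish a summable-in-$n$ moment estimate of the form $n!\,\|g_{R,n}\|_{\kH^{\otimes n}}^2\le C^n (n!)^{-\alpha}\times(\text{the }n{=}1\text{ order})$ for some $\alpha>0$; in the rough case this is where the constraint $H_0+H_1>\tfrac34$ enters, as it is precisely what makes the relevant space-time integrals converge after bounding each heat kernel by a Gaussian in the Fourier variable.

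For part (ii), write $F_R(t)/\sigma_R(t)=\sum_{n\ge1}I_n(h_{R,n})$ with $h_{R,n}=\sigma_R(t)^{-1}g_{R,n}$, so $\sum_n n!\,\|h_{R,n}\|_{\kH^{\otimes n}}^2=1$. By the chaotic CLT (see \cite[Section~6.3]{blue}) it is enough to check: (a) for each fixed $n$, $n!\,\|h_{R,n}\|_{\kH^{\otimes n}}^2\to c_n\ge0$ as $R\uparrow\infty$; (b) for each fixed $n\ge2$ and $r\in\{1,\dots,n-1\}$, the contraction $\|h_{R,n}\otimes_r h_{R,n}\|_{\kH^{\otimes(2n-2r)}}\to0$ (the fourth-moment criterion inside each fixed chaos); and (c) the tail is uniformly negligible, $\lim_{N\to\infty}\limsup_{R\uparrow\infty}\sum_{n>N}n!\,\|h_{R,n}\|_{\kH^{\otimes n}}^2=0$, which follows from the same moment estimate as in part (i). Granting (a)--(c), $F_R(t)/\sigma_R(t)$ converges in law to $\cN(0,\sum_n c_n)$, and the normalization forces $\sum_n c_n=1$, giving the standard normal limit.

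The main obstacle I expect is part (ii)(b)--(c): the analysis of the contraction norms together with the uniform chaos bound. Each $\|g_{R,n}\otimes_r g_{R,n}\|_{\kH^{\otimes(2n-2r)}}^2$ expands into an integral over $2n-2$ time variables and $2n-2$ space variables of a product of heat kernels paired against $\gamma_0$ and against $\gamma_1$ (equivalently $\mu$), and one must show this is of strictly smaller order in $R$ than $\big(\|g_{R,n}\|_{\kH^{\otimes n}}^2\big)^2$. This calls for careful Fourier-analytic bookkeeping: bounding the heat kernels by exponentials in the spectral variables, using Dalang's condition \eqref{D_cond} to integrate out those variables, and exploiting that the dilated indicator $\widehat{\mathbf 1_{B_R}}$ pins the ``free'' spectral variable near the origin while the contracted variables range over all of $\R^d$. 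In the rough case the same computations are substantially more delicate because $|\xi|^{1-2H_1}$ grows at infinity, and one must repeatedly invoke $H_0+H_1>\tfrac34$ to close the estimates; this technical analysis is the heart of \cite{NSZ21} (and, in the white-in-time special case, of \cite{NZ19BM}).
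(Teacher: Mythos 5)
Your outline is correct and follows essentially the same route as the paper, which does not reprove Theorem \ref{thm_quality} but cites \cite{NZ19BM,NSZ21}, where precisely this chaotic-CLT strategy (chaos decomposition of $F_R(t)$, variance asymptotics with the first chaos dominating in case (2), vanishing contractions in each fixed chaos, and uniform control of the chaos tail) is carried out. No discrepancy to report.
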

\begin{remark}
More precisely, additionally to Theorem \ref{thm_quality} (3), we know from {\rm\cite[Equation (1.5)]{NSZ21}} that 
\begin{equation}
  \lim_{R\uparrow \infty}R^{-1} {\rm Var}(F_R(t)) = 2 \int_\R dz \E\big[ \mathfrak{g}( \mathcal{I}_z  )\big],   \label{eq1}
\end{equation}
where $\mathfrak{g}(z) = e^z -z - 1$ is strictly positive   except for $z=0$. Then the above limit vanishes if and only if almost surely   $\mathcal{I}_z =0$ for almost every $z\in\R$. Taking into account the explicit expression of $\mathcal{I}_z = \mathcal{I}_{t,t}^{1,2}(z)$ and  equation (1.8) in {\rm\cite[Proposition 1.3]{NSZ21}}, we can conclude that  the limit in \eqref{eq1} is strictly positive and thus $\sigma_R(t) \sim R^{1/2}$. Indeed, by (1.8) and $L^2$-continuity, $\E[ u(t,x) u(t,0) ] =  \E\big[ \exp( \mathcal{I}_x) \big]  >1$ for $x$ near $0$ so that with positive probability $\mathcal{I}_x \neq 0$ for $x$ near zero. 
\end{remark}

Note that the above CLT results are of qualitative nature and there are also functional version of these results where the limiting objects are  centered Gaussian process with explicit covariance structures. Both CLT in (1) and (3) are chaotic,  meaning that each\footnote{To be more precise, in case (3), the first chaotic component has negligible contribution  in the limit while  each of the other chaotic components has a Gaussian limit.} chaos contributes to the Gaussian limit, while CLT in (2) is not chaotic. More precisely, in (2) the first chaotic component, which is Gaussian, dominates the asymptotic behavior as $R\uparrow \infty$; see the above references for more details.  Here we point out that the application of the chaotic CLT does not yield the rate of convergence, that is, the  error bound like \eqref{SHE_rate} is not accessible through this method.  Our paper is devoted to deriving quantitative versions of the above CLT results as stated in the following theorem.

\begin{theorem}\label{thm_QCLT}
Let the assumptions in Theorem \ref{thm_quality} hold and recall cases {\rm(1) - (3)} therein.  Then we have for any fixed $t,R\in(0,\infty)$,
\begin{align}\label{main_TV}
d_{\rm TV}\big( F_R(t)/\sigma_R(t),Z \big) \leq   C_t \times
\begin{cases}
R^{-d/2} & \text{for \rm(1)} \\
R^{-\beta/2} & \text{for \rm (2)} \\
R^{-1/2} & \text{for \rm (3)} 
\end{cases},
\end{align}
where the constant $C_t>0$ is independent of $R$ and $Z\sim \mathcal{N}(0, 1)$ denotes a standard normal random variable.
\end{theorem}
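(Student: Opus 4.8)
The plan is to run the Malliavin--Stein method, substituting for the Clark--Ocone/It\^o-calculus input of \cite{HNV20} — which is unavailable in the colored-in-time regime — a second-order Gaussian Poincar\'e inequality. Write $D$ for the Malliavin derivative, $\delta$ for its adjoint, and $L=-\delta D$ for the associated Ornstein--Uhlenbeck generator on the isonormal Gaussian space of $W$. Since $\E[F_R(t)]=0$, the general Malliavin--Stein bound gives
\begin{align*}
d_{\rm TV}\big( F_R(t)/\sigma_R(t), Z \big) \leq \frac{2}{\sigma_R(t)^2}\sqrt{ \var\big( \lgl DF_R(t), -DL^{-1}F_R(t)\rgl_\kH \big) }.
\end{align*}
The qualitative Theorem \ref{thm_quality} already furnishes the matching lower bounds $\sigma_R(t)^2\gtrsim R^{d}$, $\sigma_R(t)^2\gtrsim R^{2d-\beta}$, and $\sigma_R(t)^2\gtrsim R$ in cases (1), (2), (3) respectively; hence it suffices to prove that the variance on the right-hand side is bounded, by a constant depending on $t$ but not on $R$, by $R^{d}$, $R^{2d-\beta}$, and $R$ in the three cases. (The moment estimates below will in particular show $F_R(t)\in\D^{2,4}$, so all quantities are well defined.)

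The first step is to dominate this variance by a second-order Poincar\'e-type expression. What I would use is an inequality bounding $\var\big( \lgl DF_R(t), -DL^{-1}F_R(t)\rgl_\kH \big)$ by a multiple integral, over two additional pairs of space-time variables, of $\E\big[\|D^2 F_R(t)\|^2_\kH\,\|D^2 F_R(t)\|^2_\kH\big]^{1/2}$-type quantities, the integrations being weighted by the temporal kernel $\gamma_0$ and the spatial kernel $\gamma_1$ (equivalently, on the Fourier side, by the spectral measure $\mu$, via \eqref{def_2_f}). Relative to the white-noise case of \cite{HNV20}, the novelty is that these weights now carry extra integrations in time against $\gamma_0$; producing a usable inequality of this kind in the colored-in-time setting is one of the main points — this is the ``modified second-order Gaussian Poincar\'e inequality'' referred to in the abstract.

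The analytic core is a family of sharp $L^p(\Omega)$ bounds for the first two Malliavin derivatives of $u$. Differentiating the mild formulation of \eqref{pam} yields the linear integral equation
\begin{align*}
D_{s,y}u(t,x) = p_{t-s}(x-y)\,u(s,y) + \int_s^t\!\!\int_{\R^d} p_{t-r}(x-z)\,D_{s,y}u(r,z)\,W(dr,dz),
\end{align*}
and a companion equation for $D^2_{(s,y),(s',y')}u(t,x)$ whose source term is a product of two heat kernels times a single first-order derivative. Combining hypercontractivity on Wiener chaos with a Gr\"onwall--Minkowski iteration — whose convergence is precisely where Dalang's condition \eqref{D_cond}, the admissible range of $\beta$ in Hypothesis \ref{h3b}, and the constraint $H_0+H_1>3/4$ in Hypothesis \ref{h2} are invoked — one obtains, for every $p\geq 2$ and with constants depending on $t,p$ but not on $R$,
\begin{align*}
\|D_{s,y}u(t,x)\|_{L^p(\Omega)} \lesssim p_{t-s}(x-y), \qquad \|D^2_{(s,y),(s',y')}u(t,x)\|_{L^p(\Omega)} \lesssim p_{t-s}(x-y)\,p_{s-s'}(y-y'),
\end{align*}
the second bound with $(s,y)$ and $(s',y')$ interchanged when $s'>s$.

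In the final step one inserts these estimates into the second-order Poincar\'e bound and carries out the space-time integrations: after integrating the spatial arguments over $\R^d$ and the temporal arguments over $[0,t]$ (using local integrability of $\gamma_0$), the heat kernels contribute only $t$-dependent constants, and the sole surviving $R$-dependence is the factor $\int_{B_R}\int_{B_R}\gamma_1(x-x')\,dx\,dx'$, of order $R^{d}$ under Hypothesis \ref{h3a} and of order $R^{2d-\beta}$ under Hypothesis \ref{h3b}; in the rough case one instead estimates on the Fourier side against $c_{H_1}|\xi|^{1-2H_1}\,d\xi$ and the surviving factor is $R$. Dividing by $\sigma_R(t)^2$ then yields \eqref{main_TV}. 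I expect the principal obstacle to lie in this last step: the relevant objects are genuinely high-dimensional space-time integrals, and establishing the orders $O(R^{d})$, $O(R^{2d-\beta})$, $O(R)$ \emph{uniformly in} $R$ demands careful estimation; the rough case $H_1<1/2$ is the most delicate, since there $\gamma_1$ is only a distribution and the joint integrability of the time kernels is exactly what forces $H_0+H_1>3/4$.
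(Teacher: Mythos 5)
Your outline for the regular cases (1)–(2) does match the paper's strategy in broad terms: Malliavin--Stein plus a second-order Gaussian Poincar\'e inequality (Proposition \ref{propA1}), fed by pointwise $L^p$ bounds $\|D_{s,y}u(t,x)\|_p\lesssim p_{t-s}(x-y)$ and $\|D^2_{(r,z),(s,y)}u(t,x)\|_p\lesssim f_{t,x,2}(r,z,s,y)$, followed by space-time integration. Two points, however, need repair. First, your bookkeeping in case (2) is off: with $\sigma_R(t)^2\sim R^{2d-\beta}$, the rate $R^{-\beta/2}$ in \eqref{main_TV} only requires the Poincar\'e quantity to be $O(R^{4d-3\beta})$, and that is exactly what the paper proves (via a scaling argument giving $\mathcal{A}^\ast\lesssim R^{4d-3\beta}$); the stronger bound $O(R^{2d-\beta})$ you announce (which would yield the better rate $R^{-(d-\beta/2)}$) is not what the heat-kernel/Riesz-kernel integrals produce, and the surviving $R$-dependence is not simply $\int_{B_R^2}\gamma_1(x-x')\,dx\,dx'$. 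Second, obtaining the derivative bounds by differentiating the mild equation and running a Gr\"onwall--Minkowski iteration does not close in the colored-in-time Skorohod setting: the $L^p$ norm of a Skorohod integral is not controlled by the $L^p$ norm of its integrand alone (Meyer's inequalities bring in further Malliavin derivatives), which is precisely why the paper works instead with the explicit Wiener chaos expansion of the linear PAM solution and proves Theorem \ref{M_D_regular} chaos by chaos, using Lemma \ref{lem_redu} and Fourier estimates under Dalang's condition.

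The genuine gap is case (3). For $H_1<\tfrac12$ the spatial covariance is a distribution, so the second-order Poincar\'e inequality with pointwise weights $\gamma_1(z-z')$ is unavailable, and one cannot ``estimate on the Fourier side'' by inserting pointwise $L^4$ bounds of $Du$, $D^2u$ into \eqref{def_2_f}: the spectral form of $\langle\cdot,\cdot\rangle_\kH$ does not factor through absolute values of the integrand, so absolute-value/pointwise bounds give no handle on the variance term. The paper's resolution—and its main technical content—is to rewrite the $\cH_1$ inner product via the Gagliardo seminorm \eqref{def_gag}, derive a genuinely different second-order Poincar\'e inequality (Proposition \ref{propA2}) whose right-hand side involves $L^4$ norms of \emph{increments} $\|D_{r',z}F-D_{r',z'}F\|_4$ and of rectangular increments of $D^2F$, and then prove pointwise $L^p$ estimates for these increments of $Du$ and $D^2u$ (Proposition \ref{corodu}, Lemmas \ref{produ} and \ref{prod2u}), expressed through the kernels $\Delta_t$, $R_t$, $N_t$ and the operators $\Phi$, $\Lambda$. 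Your pointwise bounds $\|D_{s,y}u(t,x)\|_p\lesssim p_{t-s}(x-y)$ carry no information about such increments, so your plan cannot produce the $O(R)$ bound, hence not the $R^{-1/2}$ rate; moreover the ``modified'' Poincar\'e inequality you invoke addresses only the temporal coloring, whereas the modification the rough case actually requires is spatial.
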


In a  recent paper \cite{BNQSZ21}, the authors face the  problem of establishing a quantitative CLT for the hyperbolic Anderson model driven by a colored noise.
A basic ingredient in this paper   is the so-called second-order Gaussian Poincar\'e inequality (see Proposition \ref{propA1}).  With this inequality in mind, it is not difficult to see that, in order to obtain the desired rate of convergence, we need to equip ourselves with   fine  $L^p$-bounds of the Malliavin derivatives valued at (almost) \emph{every} space-time points.  This will be our approach in the \textbf{regular case},  and the majority of the effort will be allocated to show these $L^p$-bounds, with which we will  apply Proposition \ref{propA1} to get the quantitative CLT.

 However, in the  \textbf{rough case}  the spatial correlation $\gamma_1$  is a generalized function and  if one understands the inner product $\langle \bullet , \bullet \rangle_\kH $   using  the Fourier transform \eqref{def_2_f}, Proposition \ref{propA1} does not fit. This is a highly nontrivial difficulty the we overcome by taking advantage of an equivalent formulation of the inner product $\langle \bullet , \bullet \rangle_\kH $ based on fractional calculus (see Section \ref{sec_fss}).  
 Starting from such an equivalent expression, we derive in the \textbf{rough case} another version of the second-order Gaussian Poincar\'e inequality (see Proposition \ref{propA2}) that is better adapted to our purpose. We also refer the readers to Remark \ref{rmk_a} for a detailed discussion.

Let us complete this section with a few more remarks on (quantitative) CLT in   other settings.
\begin{itemize}
\item[1)]  The authors of  \cite{NSZ21}   study the situation where the noise $W$ is colored in space-time $\R_+\times\R$ with the spectral   density $\varphi$ satisfying a modified Dalang's condition and the concavity condition:
\[
\int_\R \frac{\varphi(x)^2}{1+ x^2}dx < +\infty\quad{\rm and}\quad \text{$\exists \kappa\in(0,\infty)$ such that  $\varphi(x+y) \leq \kappa( \varphi(x) + \varphi(y))$} 
\]
for all $x,y\in\R$. Using the chaotic CLT, they are able to establish the CLT results of qualitative nature. It is not clear to us how to derive the moment estimates for Malliavin derivatives in this setting.  A more intrinsic problem is that unlike in our \textbf{rough case}, we are not aware of any equivalent real-type expression for the inner product of the underlying Hilbert space $\kH$ and thus we do not see how to put the potential  moment estimates  in use.

\item[2)]    In the recent paper \cite{NZ20erg}, the spatial ergodicity for certain  nonlinear stochastic wave equations with spatial dimension not bigger than $3$ is established under some mild assumptions. The condition on the driving noise $W$ can be roughly summarized as follows: $W$ is white in time,  the spatial correlation satisfies  Dalang's condition and the spectral measure has no atom at  zero.  So far the CLT is open when the spatial dimension is three, while  there have been several works \cite{BNZ20, DNZ20, NZ20}  in dimensions $1$ or $2$.  When the spatial dimension is three, the wave kernel is a measure (not a function any more) and then the Malliavin derivative $Du(t,x)$ of the solution \emph{shall} be a random measure supported on some sphere, which can not be valued pointwisely.   
\end{itemize} 
 
 The rest of the paper is organized as follows: Section \ref{sec2} contains some preliminaries that will be used in this paper. We study the \textbf{regular case} in Section \ref{sec_regular} and leave the \textbf{rough case} to Section \ref{sec_rough}.

\section{Preliminaries and technical lemmas}\label{sec2}
In this section, we provide some preliminaries and useful lemmas.  

In the first step, we introduce some notation that will be used frequently in this paper. Let $n$ be a positive integer, we make use of notation $\bx_n=(x_1,\dots, x_n)\in \R^{nd}$ and $\bt_n=(t_1,\dots, t_n)\in \R_+^n$. Given $\bx_n\in \R^{nd}$, we write $\bx_{k:n}\in \R^{n-k+1}$ short for $(x_k,\dots, x_{n})$ with $k=1,\dots, n$, and $\bt_{k:n}\in \R_+^{n-k+1}$ is defined in the same way. Let $0\leq s<t<\infty$. We put $\dT_n^{s,t}=\{\bs_n\in\R_+^n: s<s_1<\dots< s_n<t \}$ and $\dT_n^{t}=\dT_n^{0,t}$. We use $c$, $c_1$, $c_2$ and $c_3$ for some positive constants which may vary from line to line. Finally, we write $A_1\lesssim A_2 $ if there exists a constant $c$ such that $A_1\le cA_2$.

\subsection{Wiener chaos and parabolic Anderson model}\label{sec_wcpam}
Let $\kH$ be a Hilbert space of (generalized) functions on $\R_+\times \R$, and let $W=\{W(h),h\in \kH\}$ be an isonormal Gaussian process over $\kH$. Denote by $\cF=\sigma(W)$ the smallest $\sigma$-algebra generated by $W$.
Then, any $\cF$-measurable and square integrable random variable $F$  can be unique expanded into a series of multiple It\^{o}-Wiener integrals (c.f. \cite[Theorem 1.1.2]{Nualart06}),
\begin{align}\label{def_cos}
F=\E(F)+\sum_{n=1}^{\infty}I_n(f_n),
\end{align}
where for $n=1,2,\dots$, $I_n(f_n)$ is the multiple It\^{o}-Wiener integral of $f_n$, which   is a  symmetric function on $(\R_+\times\R^{d})^n$, meaning
\begin{align}\label{def_hdn}
f_n\in \kH^{\odot n}=\big\{&f\in \kH^{\otimes n}, f(\bt_n,\bx_n)=f_n(t_{\sigma(1)},\dots, t_{\sigma(n)},x_{\sigma(1)},\dots,x_{\sigma(n)}),\nonumber\\
&  \text{for all permutation $\sigma$ on } \{1,\dots, n\}\big\}.
\end{align}
In the space-time white noise  case, $I_n(f_n)$ can be understood as the $n$-folder iterate It\^{o}-Walsh's integral (c.f. \cite{Walsh}),
\[
I_n(f_n)=n!\int_{0<t_1<\dots<t_n<\infty}\int_{\R^n}f_n(\bt_n,\bx_n)W(dt_1,dx_1)\cdots W(dt_n,dx_n).
\]
For any $n$, we denote by $\mathbf{H}_n$ the $n$-th Wiener chaos of $W$, that is the collection of random variables of the form $F=I_n(f_n)$, with $f_n\in \cH^{\odot n}$. In any fixed Wiener chaos $\mathbf{H}_n$, the following inequality of hypercontractivity (c.f. \cite[Corollary 2.8.14]{blue}) holds 
\begin{align}\label{hyper_ineq}
  \big\| F \big\|_p \leq (p-1)^{n/2} \big\| F \big\|_2,
 \end{align}
for $p\geq 2$ and for all $n=1,2,\dots$ and $F\in \mathbf{H}_n$.

Assume Hypothesis \ref{h1}. Set $\mathfrak{X}=\kH$  when  $\gamma_0=\delta$. Then, $\mathfrak{X}=L^2(\R_+ ; \cH_1)$, and we have the following lemmas that are very helpful in Section \ref{sec32_MD}.

\begin{lemma} \label{lem_redu}
For any nonnegative function $f\in\mathfrak{X}^{\otimes n}$ supported in $( [0,t]\times\R^d)^n$, we have 
\[
\big\| f\big\|^2_{\mathfrak{H}^{\otimes n}} \leq  \Gamma_t^n \big\| f\big\|^2_{\mathfrak{X}^{\otimes n}},
\]
where $\Gamma_t : = \int_{[-t,t]}\gamma_0(s)ds$.
\end{lemma}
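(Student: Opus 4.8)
The plan is to peel off the time integration from both Hilbert-space norms and reduce the estimate to a statement about the one-dimensional kernel $\gamma_0$. Since $\mathfrak{H}=\cH_0\otimes\cH_1$, the canonical isometry $\mathfrak{H}^{\otimes n}\cong\cH_0^{\otimes n}\otimes\cH_1^{\otimes n}$ (immediate for finite sums of elementary tensors, then extended by density) lets us view $f$ as an $\cH_1^{\otimes n}$-valued function of the time variables and gives
\[
\|f\|_{\mathfrak{H}^{\otimes n}}^2=\int_{\R_+^{2n}}\Big(\prod_{j=1}^n\gamma_0(t_j-s_j)\Big)\lgl f(\bt_n,\cdot),f(\bs_n,\cdot)\rgl_{\cH_1^{\otimes n}}\,d\bt_n\,d\bs_n ,
\]
whereas $\|f\|_{\mathfrak{X}^{\otimes n}}^2=\int_{\R_+^{n}}\|f(\bt_n,\cdot)\|_{\cH_1^{\otimes n}}^2\,d\bt_n$. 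Because $f\ge 0$ and $\gamma_1\ge 0$, the integrand in the first identity is nonnegative, so the expression is a well-defined element of $[0,\infty]$ and there are no cancellation issues. First I would invoke the support hypothesis: since $f$ vanishes off $([0,t]\times\R^d)^n$, that integrand vanishes unless $\bt_n,\bs_n\in[0,t]^n$, so the double integral may be taken over the compact box $[0,t]^{2n}$. This reduction to a bounded set is exactly what makes the mere local integrability of $\gamma_0$ sufficient.

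Next I would strip off the spatial part. Because $\gamma_0\ge 0$ under Hypothesis \ref{h1}, the weight $\prod_{j=1}^n\gamma_0(t_j-s_j)$ is nonnegative, so inserting the elementary bound $\lgl a,b\rgl\le\tfrac12\|a\|^2+\tfrac12\|b\|^2$ (valid in any real Hilbert space, here $\cH_1^{\otimes n}$, with $a=f(\bt_n,\cdot)$ and $b=f(\bs_n,\cdot)$) under the integral yields
\[
\|f\|_{\mathfrak{H}^{\otimes n}}^2\le\tfrac12\int_{[0,t]^{2n}}\Big(\prod_{j=1}^n\gamma_0(t_j-s_j)\Big)\Big(\|f(\bt_n,\cdot)\|_{\cH_1^{\otimes n}}^2+\|f(\bs_n,\cdot)\|_{\cH_1^{\otimes n}}^2\Big)\,d\bt_n\,d\bs_n .
\]
(Note that only $\gamma_0\ge 0$ is actually used here; nonnegativity of $f$ and $\gamma_1$ served only to make the left-hand side manifestly well defined.)

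Then I would evaluate the two resulting terms by Tonelli. The first equals
\[
\tfrac12\int_{[0,t]^n}\|f(\bt_n,\cdot)\|_{\cH_1^{\otimes n}}^2\prod_{j=1}^n\Big(\int_{[0,t]}\gamma_0(t_j-s_j)\,ds_j\Big)\,d\bt_n ,
\]
and for each $j$ and each $t_j\in[0,t]$ the change of variables $w=t_j-s_j$ gives $\int_{[0,t]}\gamma_0(t_j-s_j)\,ds_j=\int_{t_j-t}^{t_j}\gamma_0(w)\,dw\le\int_{-t}^{t}\gamma_0(w)\,dw=\Gamma_t<\infty$, using $\gamma_0\ge 0$ and its local integrability. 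Hence this term is at most $\tfrac12\Gamma_t^n\int_{\R_+^n}\|f(\bt_n,\cdot)\|_{\cH_1^{\otimes n}}^2\,d\bt_n=\tfrac12\Gamma_t^n\|f\|_{\mathfrak{X}^{\otimes n}}^2$, and the second term is bounded in exactly the same way after relabeling $\bt_n\leftrightarrow\bs_n$ (again using that the $\bs_n$-integration is over $[0,t]^n$). Adding the two halves gives $\|f\|_{\mathfrak{H}^{\otimes n}}^2\le\Gamma_t^n\|f\|_{\mathfrak{X}^{\otimes n}}^2$, as claimed.

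The statement is soft, so there is no serious obstacle; the one point that must be handled with care is the order of operations in the second step. Since $\gamma_0$ need not be integrable on all of $\R$ (for instance $\gamma_0(t)=|t|^{2H_0-2}$ with $\tfrac12<H_0<1$), one must first restrict the double integral to $[0,t]^{2n}$ using the support of $f$, and only afterwards apply the splitting $\lgl a,b\rgl\le\tfrac12\|a\|^2+\tfrac12\|b\|^2$; otherwise the surviving integral $\int_{\R_+}\gamma_0(t_j-s_j)\,ds_j$ would diverge. An equivalent bookkeeping is to set $g(\bt_n)=\|f(\bt_n,\cdot)\|_{\cH_1^{\otimes n}}$ and recognize $\int_{[0,t]^{2n}}\prod_j\gamma_0(t_j-s_j)\,g(\bt_n)g(\bs_n)\,d\bt_n\,d\bs_n=\lgl T_0^{\otimes n}g,g\rgl_{L^2([0,t]^n)}$, where $T_0$ is the integral operator on $L^2([0,t])$ with kernel $\gamma_0(\cdot-\cdot)$; Schur's test gives $\|T_0\|_{\mathrm{op}}\le\Gamma_t$, hence $\|T_0^{\otimes n}\|_{\mathrm{op}}\le\Gamma_t^n$, which yields the same conclusion.
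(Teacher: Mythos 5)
Your proof is correct. The paper states Lemma \ref{lem_redu} without giving a proof (it is treated as a standard estimate of the kind used in the cited references), so there is no in-text argument to compare against; your route — rewrite $\|f\|^2_{\mathfrak{H}^{\otimes n}}$ as a double time integral of $\langle f(\bt_n,\cdot),f(\bs_n,\cdot)\rangle_{\cH_1^{\otimes n}}$ weighted by $\prod_j\gamma_0(t_j-s_j)$, use nonnegativity of $f$ and $\gamma_1$ to justify Tonelli, restrict to $[0,t]^{2n}$ via the support assumption, and only then split with $\langle a,b\rangle\le\tfrac12\|a\|^2+\tfrac12\|b\|^2$ so that each $s_j$-integral of $\gamma_0$ is bounded by $\Gamma_t$ — is exactly the standard argument, and your remark that the restriction to the box must precede the splitting (since $\gamma_0$ is only locally integrable) identifies the one genuinely delicate point. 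The Schur-test reformulation you mention at the end is an equally valid bookkeeping of the same estimate.
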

\begin{lemma}[{\cite[Proposition 1.1.2]{Nualart06} and \cite[Section 2]{BNQSZ21}}]\label{lem_prod}
Let $m,n \geq 1$ be integers and let $f$ and $g$ in $\mathfrak{X}^{\odot n}$ and $\mathfrak{X}^{\odot m}$ respectively. Then,
\[
I_n^\X(f) I_m^\X(g) = \sum_{r=0}^{n \wedge m} r! \binom{m}{r} \binom{n}{r} I_{n+m - 2r}^\X\big( f\otimes_r g),
\]
where $I_n^\mathfrak{X}$ denotes the the multiple It\^{o}-Wiener integral with respect to an isonormal Gaussian process over $\kX$ and $f\otimes_r g$  denotes the $r$-th contraction between $f$ and $g$, namely, an element in $\mathfrak{X}^{n+m-2r}$ defined by 
\begin{align*}
\big(f\otimes_r g\big)( \bt_{n-r},  \bx_{n-r},  \bs_{m-r},  \by_{m-r}    ) =     \big\langle  f\big( \bt_{n-r},  \bx_{n-r},\bullet\big),   g\big( \bs_{m-r},  \by_{m-r}  , \bullet \big) \big\rangle_{L^2(\R_+^r; \cH_1^{\otimes r})}.
\end{align*}
In particular, if in addition $f, g$ have disjoint temporal support\footnote{This means $f = 0$ outside $(J\times\R^d)^n$ and    $g = 0$ outside $(J^c\times\R^d)^m$ for some $J\subset\R_+$. Note that for $f, g$ non-symmetric having disjoint temporal support, the equality \eqref{prod_eq} still holds true. }, then 
\begin{align} \label{prod_eq}
I_n^\X(f) I_m^\X(g) = I^\X_{n+m}(f\otimes g)
\end{align}
and $I_n^\X(f)$, $I_m^\X(g) $ are independent. 
\end{lemma}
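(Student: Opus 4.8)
The plan is to prove the general multiplication formula first and then specialize to disjoint temporal supports. Both sides of the claimed identity are continuous multilinear maps of $(f,g)$ for the $\X^{\otimes n}\times\X^{\otimes m}$ topology: on the left via the isometry $\|I_k^\X(\cdot)\|_2^2 = k!\,\|\cdot\|^2_{\X^{\otimes k}}$ together with boundedness of the contraction operators, and on the right via the same isometry. Hence, by density of symmetric elementary tensors, it suffices to verify the formula for $f = h^{\otimes n}$ and $g = k^{\otimes m}$ with $h,k\in\X$, and then pass to general $f\in\X^{\odot n}$, $g\in\X^{\odot m}$ by multilinearity and approximation.

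For such simple tensors I would argue through exponential vectors. Writing $H_n$ for the $n$-th probabilists' Hermite polynomial, one has $I_n^\X(h^{\otimes n}) = \|h\|^n_\X\, H_n\big(W(h)/\|h\|_\X\big)$, whence the generating identity
\[
\cE(h):=\exp\big(W(h)-\tfrac12\|h\|^2_\X\big) = \sum_{n\ge0}\frac{1}{n!}\,I_n^\X(h^{\otimes n}).
\]
The key algebraic fact is $\cE(sh)\,\cE(tk) = e^{st\lgl h,k\rgl_\X}\,\cE(sh+tk)$, valid for all scalars $s,t$, which follows from $W(sh)+W(tk)=W(sh+tk)$ and the bilinear expansion of $\|sh+tk\|^2_\X$. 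Expanding $\cE(sh+tk)=\sum_N \frac{1}{N!}I_N^\X\big((sh+tk)^{\otimes N}\big)$ with $(sh+tk)^{\otimes N}=\sum_a \binom{N}{a}s^a t^{N-a}\,h^{\otimes a}\tot k^{\otimes(N-a)}$, and $e^{st\lgl h,k\rgl_\X}=\sum_r \frac{(st)^r}{r!}\lgl h,k\rgl_\X^r$, I would match the coefficient of $s^n t^m$ on both sides. This forces $a=n-r$ and $N=n+m-2r$; after simplifying $\frac{n!\,m!}{r!\,(n+m-2r)!}\binom{n+m-2r}{n-r}=r!\binom{n}{r}\binom{m}{r}$ and recognizing $\lgl h,k\rgl_\X^r\, h^{\otimes(n-r)}\tot k^{\otimes(m-r)} = h^{\otimes n}\,\widetilde{\otimes}_r\, k^{\otimes m}$, one recovers exactly the stated formula, the tilde being dropped because $I_{n+m-2r}^\X$ symmetrizes its argument.

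For the disjoint-support case I would use that in the white-in-time regime $\X = L^2(\R_+;\cH_1)$, so the contraction $f\otimes_r g$ is obtained by integrating out $r$ shared time variables (while contracting $r$ copies of $\cH_1$). If $f$ is supported in $(J\times\R^d)^n$ and $g$ in $(J^c\times\R^d)^m$, then for each $r\ge1$ the integrand defining $f\otimes_r g$ would require a shared time coordinate to lie simultaneously in $J$ and in $J^c$, which is impossible; hence $f\otimes_r g\equiv 0$ for all $r\ge1$, and only the $r=0$ term $I_{n+m}^\X(f\otimes g)$ survives (with $f\otimes g$ understood via symmetrization, as in the footnote). For independence I would split $\X = \X_J\oplus\X_{J^c}$ with $\X_J=L^2(J;\cH_1)$ and $\X_{J^c}=L^2(J^c;\cH_1)$; these subspaces are orthogonal, so the jointly Gaussian families $\{W(\phi):\phi\in\X_J\}$ and $\{W(\psi):\psi\in\X_{J^c}\}$ are uncorrelated and therefore independent. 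Since $I_n^\X(f)$ is measurable with respect to $\sigma\big(W(\phi):\phi\in\X_J\big)$ and $I_m^\X(g)$ with respect to $\sigma\big(W(\psi):\psi\in\X_{J^c}\big)$, the two integrals are independent.

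The approximation step and the Hermite-to-multiple-integral dictionary are routine; the one point deserving care is the bookkeeping that collapses the product of binomial and factorial factors into the clean constant $r!\binom{n}{r}\binom{m}{r}$. The observation essential for the paper's applications is that the vanishing of \emph{all} contractions under disjoint temporal support is a direct consequence of the white-in-time structure $\X=L^2(\R_+;\cH_1)$, which renders the temporal part of each contraction a plain $L^2(\R_+^r)$ integral with no smoothing kernel able to couple $J$ and $J^c$.
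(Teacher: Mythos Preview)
Your proof is correct and complete. The paper does not supply its own proof of this lemma; it is stated with attribution to \cite[Proposition~1.1.2]{Nualart06} and \cite[Section~2]{BNQSZ21}, and the argument you give via exponential vectors and generating functions is essentially the standard one found in those references. Your treatment of the disjoint-support case is also correct: the key point is precisely that $\X=L^2(\R_+;\cH_1)$ makes the temporal part of each contraction a plain $L^2$ pairing, so no nonzero contribution can arise when the supports lie in complementary time sets, and orthogonality of $\X_J$ and $\X_{J^c}$ yields independence.
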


In the rest of this subsection, we provide the definition for the solution to the parabolic Anderson model \eqref{pam}. Let $u=\{u(t,x): (t,x)\in \R_+\times \R^d\}$ be a $\sigma(W)$-measurable random field such that $\E (u(t,x)^2)<\infty$ for all $(t,x)\in \R_+\times \R^d$. Then, due to \eqref{def_cos}, we can write
\[
u(t,x)=\E (u(t,x))+\sum_{n=1}^{\infty}I_n( f_n(\bullet, t,x))=\E (u(t,x))+\sum_{n=1}^{\infty}I_n( f_{ t,x,n}(\bullet)),
\] 
for all $(t,x)\in \R_+\times \R^d$, where $f_n:(\R_+\times \R^{d})^{n+1}\to \R$ and for any $(t,x)\in \R_+\times \R^{d}$, $f_{t,x,n}=f_n(\bullet, t,x)\in \cH^{\odot n}$ (see \eqref{def_hdn}). Then, $u$ is said to be Skorohod integrable with respect to $W$, if the following series is convergent in $L^2(\Omega)$,
\[
\delta(u)=\int_0^{\infty}\int_{\R^d}u(t,x)W(dt,dx)=\int_0^{\infty}\int_{\R^d}f_0(t,x)W(dt,dx)+\sum_{n=1}^{\infty}I_{n+1}(\widetilde{f}_{n}),
\]
where $\widetilde{f}_n$ denotes the symmetrization of $f_n$ in $(\R_+\times \R^d)^{n+1}$. Additionally, $u$ is said to be a (mild) solution to the parabolic Anderson model \eqref{pam}, if for every $(t,x)\in \R_+\times \R$, as a random field with parameters $(s,y)$, $p_{t-s}(x-y)u(s,y)\1_{[0,t]}(s)$ is Skorohod integrable and the following equation holds almost surely,
\[
u(t,x)=1+\int_0^t\int_{\R^d}p_{t-s}(x-y)u(s,y)W(ds,dy).
\]
 It has been proved (c.f. \cite[Section 4.1]{ptrf-09-hu-nualart}) that $u$ is a solution to \eqref{pam}, if and only if it has the following Wiener chaos expansion
\begin{align}\label{cos1}
u(t,x)= 1+ \sum_{n=1}^{\infty} I_n( f_{t,x,n}),
\end{align}
where the integral kernels   $f_{t,x,n}$ are given by
\begin{align}\label{cos2}
f_{t,x,n}(\bs_n,\bx_n)=\frac{1}{n!}p_{t-s_{\sigma(n)}}(x-x_{\sigma(n)})\cdots p_{s_{\sigma(2)}-s_{\sigma(1)}}(x_{\sigma(2)}-x_{\sigma(1)}),
\end{align}
with $\sigma$  the permutation of $\{1,\dots,n\}$ such that $0<s_{\sigma(1)}<\dots<s_{\sigma(n)}<t$ and $p_t(x)$ being the heat kernel in $\R^d$. The chaos expansion \eqref{cos1} and the expression  \eqref{cos2} will be used frequently in this paper.

\subsection{Fractional Sobolev spaces and an embedding theorem}\label{sec_fss}
In this subsection, we give a basic introduction to  fractional Sobolev spaces. They are closely related to $\cH_1$ under Hypothesis \ref{h2}. We also provide an embedding theorem for $\cH_0$. These will be used in Section \ref{sec_rough}. For a more detailed account on applications  of this topic to SPDEs, we refer  the readers to papers \cite{ap-17-hu-huang-le-nualart-tindel, abel-18-hu-huang-le-nualart-tindel,arxiv-19-hu-wang} and the references therein.

Following the notation in \cite{bsm-12-di-nezza-palatucci-valdinoci}, given parameters $s\in (0,1)$ and $p\geq 1$, the fractional Sobolev space $W^{s,p}(\R)$ is the completion of $C_c^{\infty}(\R)$ with the norm
\[
\|\phi\|_{W^{s,p}}=\big(\|\phi\|_{L^p}^p+[\phi]_{W^{s,p}}^p\big)^{\frac{1}{p}},
\]
where  $[\bullet]_{W^{s,p}}$ denotes the Gagliardo (semi)norm
\begin{align}\label{def_gag}
[\phi]_{W^{s,p}}=\Big(\int_{\R^2}dxdy\frac{|\phi(x)-\phi(y)|^p}{|x-y|^{1+sp}}\Big)^{\frac{1}{p}}.
\end{align}
 In particular, if $p=2$, $W^{s,2}$ turns out to be a Hilbert space. By using the Fourier transformation, one can write (c.f. \cite[Proposition 3.4]{bsm-12-di-nezza-palatucci-valdinoci})
\begin{align*}
[\phi]_{W^{s,2}}^2=C(s)\int_{\R}d\xi|\xi|^{2s}|\widehat{\phi}(\xi)|^2,
\end{align*}
with $C(s)=\pi^{-1}\int_{\R}\frac{1-\cos(\zeta)}{|\zeta|^{1+2s}} d\zeta$. Here the constant $C(s)$ is slightly different from that in \cite{bsm-12-di-nezza-palatucci-valdinoci}, because we use another version of the Fourier transformation. Therefore, assuming Hypothesis \ref{h2}, we see immediately that
\begin{align}
\|\phi\|_{\cH_1}^2=c_{H_1}\int_{\R}d\xi|\xi|^{1-2H_1}|\widehat{\phi}(\xi)|^2=[\phi]_{W^{\frac{1}{2}-H_1,2}}^2. \label{c_a_22}
\end{align}
In this paper, we will use both representations of the norm in $\cH_1$ via the Fourier transformation and the Gagliardo formulation.

 In the next part of this subsection, we introduce an embedding property for the Hilbert space $\cH_0$. Assume Hypothesis \ref{h2} with $H_0\in(\frac{1}{2}, 1)$, then there exist a continuous embedding $L^{1/H_0}(\R_+) \hookrightarrow \cH_0$ (c.f. M\'{e}min et al. \cite[Theorem 1.1]{spl-01-memin-mishura-valkeila}), namely, there exists a constant $c_{H_0}$ depending only on $H_0$ such that
\begin{align}\label{embd}
|\lgl f, g\rgl_{\cH_0}|\leq c_{H_0} \|f\|_{L^{1/H_0}}\|g\|_{L^{1/H_0}}=c_{H_0}\Big(\int_{\R_+^2}dsdt |f(t)g(s)|^{\frac{1}{H_0}}\Big)^{H_0},
\end{align}
for all $f,g\in \cH_0$. Combining this fact and Cauchy-Schwarz's inequality on the Hilbert space $\cH_1$, we can show that for all $H_0\in[ \frac{1}{2}, 1)$,
\begin{align*}
\big|\lgl \phi, \psi\rgl_{\kH}\big|  &\leq  \int_{\R_+^2}dsdt\, \gamma_0(t-s) \big\vert \langle \phi(t,\bullet),  \psi(s, \bullet)\rangle_{\cH_1} \big\vert\\
& \leq  \int_{\R_+^2}dsdt\, \gamma_0(t-s) \big\|    \phi(t,\bullet) \big\|_{\cH_1} \times  \big\|  \psi(s, \bullet) \big\|_{\cH_1} \\
& \leq  c_{H_0}\|\phi\|_{L^{1/H_0}(\R_+;\cH_1)}\|\psi\|_{L^{1/H_0}(\R_+;\cH_1)}.
\end{align*}
By iteration, we can write
\begin{align}\label{emdn}
\big|\lgl \phi, \psi\rgl_{\kH^{\otimes n}}\big|\leq c_{H_0}^n\|\phi\|_{L^{1/H_0}(\R_+^n;\cH_1^{\otimes n})}\|\psi\|_{L^{1/H_0}(\R_+^n;\cH_1^{\otimes n})}
\end{align}
for all $\phi, \psi\in \kH^{\otimes n}$ and    $n=1,2,3,\dots$.

\subsection{Second-order Gaussian Poincar\'{e} inequalities}
In this subsection, we provide two versions of the second-order Gaussian Poincar\'{e} inequality (see \cite[Theorem 1.1]{jfa-09-nourdin-peccati-reniert} for the   first version). As stated in Section \ref{sec_intro}, they will be used in estimating the total variance distance in \textbf{regular} and \textbf{rough} cases respectively. 

Denote by $D$ the Malliavin differential operator (c.f. \cite[Section 1.2]{Nualart06}). Let $\mathbb{D}^{2,4}$ stand for the set of twice Malliavin differentiable  random variables $F$ with 
\begin{align*}
\|F\|_{2,4}^4=&\|F\|_4^4+\big\| \|DF\|_{\kH}\big\|_4^4 + \big\| \| D^2F \|_{\kH\otimes\kH} \big\|_4^4\\
=&\E[ F^4] + \E\big[ \|DF\|^4_{\kH} \big] + \E\big[ \| D^2F \|^4_{\kH\otimes\kH} \big] <\infty.
\end{align*}
We denote by $\mathbb{D}_*^{2,4}$  the set of  random variables $F\in \mathbb{D}^{2,4}$ such that we can find versions of the derivatives $DF$  and $D^2F$, which are   measurable functions on $\R_+\times \R^d$ and $(\R_+\times \R^d)^2$, respectively  such that  $|DF| \in  \kH$ and $|D^2F| \in \kH^{\otimes 2}$.

Let $F$ and $G$ be random variables. The total variance distance between $F$ and $G$ is defined by
\begin{align}\label{def_tvd}
d_{\rm TV}(F,G) = \sup\{ | \mu(A) - \nu(A) |, A\subset \R\ \mathrm{is\ Borel\ measurable}\},
\end{align}
where $\mu, \nu$ are  the probability laws of $F$ and $G$ respectively. 
 
 
 
 

The next proposition cited from \cite[Proposition 1.8]{BNQSZ21} will be used in the \textbf{regular case}.
\begin{proposition}\label{propA1} 
Assume  Hypothesis \ref{h1}. Let $F\in\mathbb{D}^{2,4}_*$  be a random variable with mean zero and standard deviation $\sigma\in(0,\infty)$. Then
\[
d_{\rm TV} \big( F/\sigma, Z \big) \leq \frac{4}{\sigma^2} \sqrt{\mathcal{A} },
\]
where $Z\sim\mathcal{N}(0,1)$ and 
\begin{align*}
\mathcal{A} &:= \int_{\R_+^6 \times\R^{6d}} drdr' dsds' d\theta d\theta' dzdz' dwdw' dydy' \gamma_0(\theta - \theta') \gamma_0(s-s') \gamma_0(r-r') \\
 &\qquad \times \gamma_1(z-z') \gamma_1(w-w') \gamma_1(y-y') \big\| D_{r,z}D_{\theta,w} F \big\|_4  \big\| D_{s,y}D_{\theta',w'} F \big\|_4  \big\| D_{r',z' } F \big\|_4\big\| D_{s',y' } F \big\|_4.
\end{align*}
\end{proposition}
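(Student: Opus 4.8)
\textbf{Proof proposal for Proposition \ref{propA1}.}

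The plan is to apply the abstract second-order Gaussian Poincar\'e inequality of Nourdin-Peccati-Reinert (\cite[Theorem 1.1]{jfa-09-nourdin-peccati-reniert}) on the isonormal Gaussian process $W$ over $\kH$, and then dominate the resulting abstract Hilbert-space quantities by the concrete iterated integrals against the kernels $\gamma_0,\gamma_0,\gamma_0$ and $\gamma_1,\gamma_1,\gamma_1$ displayed in the definition of $\mathcal{A}$. Recall that the abstract inequality states, for $F\in\mathbb{D}^{2,4}$ with $\E[F]=0$ and $\var(F)=\sigma^2$, that
\[
d_{\rm TV}(F/\sigma,Z)\le \frac{C}{\sigma^2}\Big(\E\big[\langle DF\otimes DF,\, D^2F\otimes_1 D^2F\rangle_{\kH^{\otimes 2}}\big]\Big)^{1/2}
\]
(up to the precise constant and the precise pairing, which I would quote verbatim from the cited source); here one also uses the hypercontractive/Cauchy-Schwarz bookkeeping to replace the expectation of a product of four $\kH$- or $\kH^{\otimes2}$-valued contractions by a product of four $L^4(\Omega)$ norms. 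First I would write out $\langle\cdot,\cdot\rangle_{\kH}$ and $\langle\cdot,\cdot\rangle_{\kH^{\otimes2}}$ using the explicit real kernel representation \eqref{def_1_real}, so that each inner product in the space variables contributes a factor $\gamma_1(z-z')$ and each inner product in the time variables contributes a factor $\gamma_0(\theta-\theta')$. Since $F\in\mathbb{D}^{2,4}_*$, the derivatives $DF$ and $D^2F$ admit genuine measurable pointwise versions with $|DF|\in\kH$, $|D^2F|\in\kH^{\otimes2}$, so Fubini is legitimate and the six space-time integrations in $\mathcal{A}$ are exactly the six pairings (three from the $D^2F\otimes_1 D^2F$ block in both time and space, carrying variables $\theta,\theta',w,w'$ for the contracted slot and $r,s,z,y$ etc. for the free slots, plus the pairing of the two copies of $DF$) that appear.

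The key steps, in order: (1) cite the abstract bound and record the precise form of the right-hand side; (2) expand all $\kH$-inner products into the real double integrals against $\gamma_0$ and $\gamma_1$, turning the bound into an integral over $\R_+^6\times\R^{6d}$ of $\gamma_0(\theta-\theta')\gamma_0(s-s')\gamma_0(r-r')\gamma_1(z-z')\gamma_1(w-w')\gamma_1(y-y')$ times an expectation of a product of four pointwise-evaluated Malliavin derivatives; (3) bring the expectation inside the deterministic integral (Fubini, justified by the $\mathbb{D}^{2,4}_*$ hypothesis and nonnegativity of $\gamma_0,\gamma_1$ — note under Hypothesis \ref{h1} both kernels are nonnegative, so there is no absolute-value obstruction) and bound $\E\big[|D_{r,z}D_{\theta,w}F|\,|D_{s,y}D_{\theta',w'}F|\,|D_{r',z'}F|\,|D_{s',y'}F|\big]$ by the product $\|D_{r,z}D_{\theta,w}F\|_4\|D_{s,y}D_{\theta',w'}F\|_4\|D_{r',z'}F\|_4\|D_{s',y'}F\|_4$ via the generalized H\"older inequality; (4) identify the resulting expression with $\mathcal{A}$ and read off the constant $4$. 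The only genuinely delicate point is step (1)--(2): one must be careful that the abstract NPR functional, which is phrased with $\otimes_1$ contractions in $\kH$, really does unfold into precisely this symmetric six-kernel integrand with the derivatives evaluated at the stated arguments; this is a bookkeeping matter of matching which variable is the contracted (dummy) slot of $D^2F$ and which are free, and of checking that the nonnegativity of $\gamma_0,\gamma_1$ lets us pass from $\langle|D^2F|,|D^2F|\rangle$-type bounds to honest pointwise products.

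I expect the main obstacle to be not any single inequality but the verification that Proposition \ref{propA1} is genuinely a specialization of \cite[Theorem 1.1]{jfa-09-nourdin-peccati-reniert} together with the pointwise-version hypothesis — i.e. confirming that $F\in\mathbb{D}^{2,4}_*$ suffices for all the Fubini/pointwise manipulations, and that no extra cross terms (e.g. from $\|D^2F\otimes_1 D^2F\|$ versus $\|D^2F\|\|D^2F\|$ type estimates) are lost. Once the unfolding is set up correctly, steps (3)--(4) are routine applications of H\"older's inequality and nonnegativity of the covariance kernels. In the write-up I would state step (1) as a direct quotation, devote one displayed computation to step (2), one line to step (3), and close with the identification in step (4).
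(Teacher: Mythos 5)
There is a genuine gap at your step (1), and it is not a bookkeeping matter. The theorem you propose to quote, \cite[Theorem 1.1]{jfa-09-nourdin-peccati-reniert}, bounds $d_{\rm TV}(F/\sigma,Z)$ by a \emph{product of two separate global moments}, of the form $\frac{C}{\sigma^2}\big(\E\|D^2F\otimes_1 D^2F\|^2_{\kH^{\otimes 2}}\big)^{1/4}\big(\E\|DF\|^4_{\kH}\big)^{1/4}$; it does not contain the mixed quantity $\E\big[\langle DF\otimes DF,\,D^2F\otimes_1 D^2F\rangle_{\kH^{\otimes 2}}\big]$ that you display, and no amount of unfolding of the $\gamma_0,\gamma_1$ kernels and H\"older inside the integrals will turn the product of two separate integrals into the single coupled six-fold integral $\mathcal{A}$, in which the second-derivative factors at $(r,z,\theta,w)$ and $(s,y,\theta',w')$ share the kernels $\gamma_0(r-r')\gamma_1(z-z')$ and $\gamma_0(s-s')\gamma_1(y-y')$ with the first-derivative factors. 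The implication in fact runs the wrong way: the bound with the $L^4$ norms \emph{inside} the integral is the improved, sharper version (this is precisely Vidotto's point in \cite{jtp-20-vidotto}), so it cannot be deduced from the original NPR statement; it has to be re-derived. The intermediate inequality you wrote down is essentially what needs proving, and it is obtained not by citation but by starting from the Malliavin--Stein bound $d_{\rm TV}(F/\sigma,Z)\le \frac{2}{\sigma^2}\sqrt{{\rm Var}\big(\langle DF,-DL^{-1}F\rangle_{\kH}\big)}$, writing $-DL^{-1}F=\int_0^\infty e^{-t}P_t DF\,dt$ via Mehler's formula, integrating by parts to bound the variance by the two terms $\E\int_0^\infty e^{-t}\langle D^2F\otimes_1 D^2F,\,P_t DF\otimes P_t DF\rangle_{\kH^{\otimes2}}dt$ and $\E\int_0^\infty e^{-2t}\langle P_t(D^2F)\otimes_1 P_t(D^2F),\,DF\otimes DF\rangle_{\kH^{\otimes2}}dt$, and only then applying H\"older pointwise under the six-fold integral together with the $L^4$-contractivity of $P_t$; this is also where the constant $4$ (rather than NPR's constant) comes from.

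For context: the paper does not prove Proposition \ref{propA1} at all — it is quoted from \cite[Proposition 1.8]{BNQSZ21} — but the paper's own proof of the rough-case analogue, Proposition \ref{propA2}, carries out exactly the semigroup argument sketched above, and your steps (2)--(4) (expanding $\langle\cdot,\cdot\rangle_{\kH}$ via \eqref{def_1_real}, Fubini justified by $F\in\mathbb{D}^{2,4}_*$ and the nonnegativity of $\gamma_0,\gamma_1$, then generalized H\"older) are the correct way to finish once that intermediate bound is in hand. So the fix is concrete: replace the citation of \cite[Theorem 1.1]{jfa-09-nourdin-peccati-reniert} by the Malliavin--Stein plus Ornstein--Uhlenbeck semigroup derivation (as in the proof of Proposition \ref{propA2} or \cite[Appendix 2]{BNQSZ21}), and then your unfolding goes through verbatim in the regular case.
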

Inspired by \cite[Theorem 2.1]{jtp-20-vidotto}, we also have the following proposition, that will be used in the \textbf{rough case}.
 \begin{proposition} \label{propA2}
Assume Hypothesis \ref{h2}.  Let $F\in\mathbb{D}^{2,4}_*$ be a random variable with  mean zero and standard deviation $\sigma\in(0,\infty)$. Then
\[
d_{\rm TV} \big( F/\sigma, Z \big) \leq \frac{2\sqrt{3}}{\sigma^2} \sqrt{\mathcal{A} },
\]
where $Z\sim\mathcal{N}(0,1)$ and  
\begin{align}\label{def_ca2}
\mathcal{A} &:= \int_{\R_+^6} dsds'd\theta d\theta' drdr' \gamma_0(s-s') \gamma_0(\theta -\theta')\gamma_0(r-r') \int_{\R^6} dydy'dzdz'dwdw' |y-y'|^{2H_1-2} \nonumber\\
&\qquad\times  |z-z'|^{2H_1-2} |w-w'|^{2H_1-2} \| D_{r',z}F -  D_{r',z'} F\|_4 \| D_{\theta',w}F -  D_{\theta',w'} F\|_4 \nonumber\\
&\qquad\times  \big\| D_{r,z}D_{s,y}F - D_{r,z}D_{s,y'}F - D_{r,z'}D_{s,y}F + D_{r,z'}D_{s,y'}F  \big\|_4 \nonumber\\
&\qquad\times  \big\| D_{\theta,w} D_{s', y}F - D_{\theta,w} D_{s', y'}F - D_{\theta,w'} D_{s', y}F + D_{\theta,w'} D_{s', y'}F \big\|_4.
\end{align}
\end{proposition}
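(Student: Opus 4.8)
The plan is to mimic the proof of Proposition \ref{propA1}, but to replace every pointwise bound on the Malliavin derivative (which is unavailable because $\gamma_1$ is a distribution in the rough case) by a bound on the \emph{increments} of the Malliavin derivative. The starting point is the first-order Gaussian Poincaré-type estimate behind Stein's method: for $F\in\mathbb{D}^{2,4}$ with mean zero and variance $\sigma^2$,
\[
d_{\rm TV}(F/\sigma,Z)\leq \frac{2}{\sigma^2}\sqrt{\var\big(\lgl DF,-DL^{-1}F\rgl_\kH\big)}\leq \frac{c}{\sigma^2}\big\| \lgl DF\otimes DF, D^2F\otimes_? D^2 F\rgl \big\|,
\]
where $L^{-1}$ is the pseudo-inverse of the Ornstein-Uhlenbeck generator; the variance of the inner product is then controlled, via the Poincaré inequality applied once more and Cauchy-Schwarz, by an $L^2(\Omega)$ (hence, by hypercontractivity, $L^4(\Omega)$) norm of a quadruple-integral expression built from two copies of $D^2F$ and two copies of $DF$. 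This is exactly the structure in Proposition \ref{propA1}. The constant $2\sqrt{3}$ (rather than $4$) will come from tracking the numerical constants more carefully in the rough setting, following \cite[Theorem 2.1]{jtp-20-vidotto}.

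The key new ingredient is to express all inner products on $\kH=\cH_0\otimes\cH_1$ in the \emph{Gagliardo} form rather than the Fourier form. By \eqref{c_a_22}, for $\phi,\psi\in\cH_1$ one has, up to the constant $c_{H_1}$,
\[
\lgl\phi,\psi\rgl_{\cH_1}=c_{H_1}'\int_{\R^2}dz\,dz'\,\frac{\big(\phi(z)-\phi(z')\big)\big(\psi(z)-\psi(z')\big)}{|z-z'|^{2-2H_1}},
\]
so $|z-z'|^{2H_1-2}$ plays the role that $\gamma_1(z-z')$ plays in the regular case, but it must be paired with the increment $\phi(z)-\phi(z')$ instead of $\phi(z)$ itself. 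Tensorising over $\cH_1^{\otimes 3}$ (three spatial coordinates $y,z,w$) and over $\cH_0^{\otimes 3}$ (three temporal coordinates, where $\gamma_0$ stays as is, because $\cH_0$ is handled by its native kernel) produces precisely the six-fold spatial integral against $|y-y'|^{2H_1-2}|z-z'|^{2H_1-2}|w-w'|^{2H_1-2}$ in \eqref{def_ca2}. The four $L^4(\Omega)$-norm factors then involve: a first-order increment $D_{r',z}F-D_{r',z'}F$ in the coordinate $z$ shared with a $D^2F$-factor; a first-order increment in $w$; and two second-order mixed increments, each a double difference of $D^2 F$ in one spatial coordinate paired with the corresponding $\cH_1$-coordinate — one for the $(s,y)$ pair and one for the $(s',y)$ pair. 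Writing $-DL^{-1}F = \sum_n n^{-1} J_n(F)$ in chaos and distributing the difference operators chaos-by-chaos, one checks that the increment structure is preserved (the difference operator commutes with $D$ and with the chaos projections), so the bound is legitimate for $F\in\mathbb{D}^{2,4}_*$; the hypothesis $F\in\mathbb{D}^{2,4}_*$ is what guarantees we may choose measurable versions of $DF, D^2F$ so that all these pointwise increments make sense and the integrals converge.

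Concretely I would proceed as follows. First, recall the Malliavin-Stein bound $d_{\rm TV}(F/\sigma,Z)\leq \sigma^{-2}\sqrt{\var(\lgl DF,-DL^{-1}F\rgl_\kH)}$ and expand $-DL^{-1}F=\sum_{n\geq1}\tfrac1n J_nDF$. Second, apply the ordinary Gaussian Poincaré inequality $\var(X)\leq\E\|DX\|_\kH^2$ to $X=\lgl DF,-DL^{-1}F\rgl_\kH$, producing a term of the form $\E\big[\int \cdots \lgl D^2F, \text{stuff}\rgl\lgl \cdots\rgl\big]$; use the product rule for $D$ and Cauchy-Schwarz over $\kH$ to bound it by the $L^2(\Omega)$-norm of a product of two $\kH^{\otimes 3}$-valued quantities, one built from $D^2F$ the other from $DF$ (with an extra $D$ hitting it). Third, write the $\kH=\cH_0\otimes\cH_1$ inner products explicitly: keep $\gamma_0$ for the three time variables, and use \eqref{c_a_22} to turn each of the three $\cH_1$ inner products into a Gagliardo double integral with kernel $|\cdot|^{2H_1-2}$, so that the $DF$- and $D^2F$-factors get replaced by their spatial increments. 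Fourth, apply Minkowski's integral inequality to pull the $L^4(\Omega)$-norm (having upgraded $L^2(\Omega)$ to $L^4(\Omega)$ via hypercontractivity \eqref{hyper_ineq}, which is where the constant $\sqrt3$ ultimately enters) inside the six spatial and six temporal integrals, arriving at $\mathcal{A}$ as defined in \eqref{def_ca2}; collecting the numerical constants gives $2\sqrt3/\sigma^2$. The main obstacle is the careful bookkeeping in the third and fourth steps: one must verify that after introducing the Gagliardo representation the increments land on the correct factors (the "shared-coordinate" structure — that $z$ appears both as an increment variable for a $DF$-factor and inside a second-order increment of a $D^2F$-factor — must come out exactly as in \eqref{def_ca2}), and that all the resulting integrals are absolutely convergent so that Fubini and Minkowski are justified; the condition $H_0+H_1>3/4$ and $F\in\mathbb{D}^{2,4}_*$ are precisely what make this work, and matching the constant $2\sqrt3$ requires following the chaos-decomposition argument of \cite{jtp-20-vidotto} rather than the cruder bound that would give $4$.
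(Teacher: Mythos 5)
Your overall skeleton (Malliavin--Stein bound, rewriting all $\cH_1$-inner products in Gagliardo form so that increments of $DF$ and double increments of $D^2F$ appear, then H\"older/Minkowski to produce the four $L^4(\Omega)$ factors of \eqref{def_ca2}) matches the paper's strategy, but the middle step as you propose it has a genuine gap. The paper does \emph{not} apply the plain Gaussian Poincar\'e inequality to $X=\lgl DF,-DL^{-1}F\rgl_\kH$ and then upgrade by hypercontractivity; it follows the Vidotto/BNQSZ21 route, representing the $L^{-1}$-modified derivatives through Mehler's formula, $-DL^{-1}F=\int_0^\infty e^{-t}P_t(DF)\,dt$ and $-D^2L^{-1}F=\int_0^\infty e^{-2t}P_t(D^2F)\,dt$, so that the variance is bounded by two terms involving $P_t(DF)$ and $P_t(D^2F)$, and then uses the \emph{contractivity of $P_t$ on $L^4(\Omega)$} to replace $\|P_t(D_{r,z}F-D_{r,z'}F)\|_4$ by $\|D_{r,z}F-D_{r,z'}F\|_4$, etc. This is exactly what lets one pass from increments of $-DL^{-1}F$ (and $-D^2L^{-1}F$) to increments of $DF$ and $D^2F$ with constants independent of the chaos structure of $F$; your alternative of expanding $-DL^{-1}F=\sum_n n^{-1}J_n F$ and invoking hypercontractivity \eqref{hyper_ineq} cannot do this, because the $L^2\to L^4$ hypercontractivity constant is $3^{n/2}$, growing with the chaos order, so for a general $F\in\mathbb{D}^{2,4}_*$ you get no uniform constant at all (let alone $2\sqrt3$).

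Relatedly, your accounting of the constant is wrong: $\sqrt3$ does not come from hypercontractivity. In the paper it comes from the two semigroup terms, $2\int_0^\infty e^{-t}\,dt=2$ and $2\int_0^\infty e^{-2t}\,dt=1$, each bounded by $\mathcal A$ after the Gagliardo rewriting and H\"older, giving $\var\big(\lgl DF,-DL^{-1}F\rgl_\kH\big)\le 3\mathcal A$ and hence $d_{\rm TV}\le \frac{2}{\sigma^2}\sqrt{3\mathcal A}=\frac{2\sqrt3}{\sigma^2}\sqrt{\mathcal A}$. Your Gagliardo bookkeeping (the shared-coordinate structure of the increments) and the role of $\mathbb{D}^{2,4}_*$ are fine, so the fix is local: replace the "Poincar\'e applied to $X$ plus hypercontractivity" step by the Mehler/Ornstein--Uhlenbeck semigroup decomposition of \cite[Appendix 2]{BNQSZ21} (following \cite{jtp-20-vidotto}) together with the $L^4$-contraction property of $P_t$, and the stated bound with constant $2\sqrt3$ follows.
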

\begin{proof}
We begin with the Malliavin-Stein bound\footnote{see \cite[Theorem 5.1.3]{blue} and also equation (1.26) and footnote 4 in \cite{BNQSZ21}, where the latter reference points out that we do not need to assume the existence of density for $F$.}
\[
d_{\rm TV}\big( F/\sigma, Z \big) \leq \frac{2}{\sigma^2} \sqrt{  {\rm Var}\big(    \langle DF, - DL^{-1}F \rangle_{\kH} \big)     }
\]
with $L^{-1}$ denoting the pseudo-inverse of the Ornstein-Uhlenbeck operator. Denote by $P_t$ the Ornstein-Uhlenbeck  semigroup. Then, following the arguments verbatim  in \cite[Appendix 2]{BNQSZ21}, we  have 
\begin{align}
 {\rm Var}\big(    \langle DF, - DL^{-1}F \rangle_{\kH} \big)   \leq & 2 \E \int_0^\infty dt e^{-t} \big\langle D^2F\otimes_1 D^2F, P_t(DF) \otimes P_t(DF) \big\rangle_{\kH^{\otimes 2}}  \label{1st_term_in}\\
 & \quad+ 2  \E \int_0^\infty dt e^{-2t} \big\langle P_t(D^2F)\otimes_1 P_t(D^2F),  DF \otimes DF \big\rangle_{\kH^{\otimes 2}}, \notag
 \end{align}
where the two terms on the right-hand side can be dealt with in the same manner. In what follows, we only estimate the second term. Put 
\begin{align*}
  g(r,z,\theta, w) :&= P_t(D_{r,z} D_{\bullet}F)\otimes_1 P_t(D_{\theta, w} D_{\bullet}F) \\
& = \int_{\R_+^2}dsds' \gamma_0(s-s') \int_{\R^2 }dydy' |y - y'|^{2H_1-2}  \\
&\qquad \times P_t\big(D_{r,z} D_{s,y}F -D_{r,z} D_{s,y'}F\big) P_t\big(D_{\theta,w} D_{s',y}F -D_{\theta,w} D_{s',y'}F\big), 
\end{align*}
where we have used the expression  \eqref{def_gag}  for the inner product in $\kH_1$.
With this notation  we can write
\begin{align*}
&\quad \big\langle P_t(D^2F)\otimes_1 P_t(D^2F),  DF \otimes DF \big\rangle_{\kH^{\otimes 2}} = (g\otimes_1 DF) \otimes_1 (DF)     \\
&= \int_{\R_+^4} d\theta d\theta' drdr' \gamma_0(\theta -\theta')\gamma_0(r-r') \int_{\R^4} dzdz'dwdw' |z-z'|^{2H_1-2} |w-w'|^{2H_1-2} ( D_{r',z}F -  D_{r',z'} F)  \\
&\quad\times( D_{\theta',w}F -  D_{\theta',w'} F)\big[  g(r,z,\theta,w)-g(r,z', \theta,w)-g(r,z,\theta,w')+g(r,z',\theta,w')  \big] \\
&=  \int_{\R_+^6} dsds'd\theta d\theta' drdr' \gamma_0(s-s') \gamma_0(\theta -\theta')\gamma_0(r-r') \int_{\R^6} dydy'dzdz'dwdw' |y-y'|^{2H_1-2} \\
&\qquad\times  |z-z'|^{2H_1-2} |w-w'|^{2H_1-2}( D_{r',z}F -  D_{r',z'} F)( D_{\theta',w}F -  D_{\theta',w'} F) \\
&\qquad\times P_t\big( D_{r,z}D_{s,y}F - D_{r,z}D_{s,y'}F - D_{r,z'}D_{s,y}F + D_{r,z'}D_{s,y'}F  \big) \\
&\qquad\times P_t\big( D_{\theta,w} D_{s', y}F - D_{\theta,w} D_{s', y'}F - D_{\theta,w'} D_{s', y}F + D_{\theta,w'} D_{s', y'}F \big).
\end{align*}
 Therefore, by using H\"older inequality and the contraction property of $P_t$ on $L^4(\Omega)$, we get 
 \begin{align*}
& \quad 2  \E \int_0^\infty dt e^{-2t} \big\langle P_t(D^2F)\otimes_1 P_t(D^2F),  DF \otimes DF \big\rangle_{\kH^{\otimes 2}} \\
 &\leq  \int_{\R_+^6} dsds'd\theta d\theta' drdr' \gamma_0(s-s') \gamma_0(\theta -\theta')\gamma_0(r-r') \int_{\R^6} dydy'dzdz'dwdw' |y-y'|^{2H_1-2} \\
&\qquad\times  |z-z'|^{2H_1-2} |w-w'|^{2H_1-2} \| D_{r',z}F -  D_{r',z'} F\|_4 \| D_{\theta',w}F -  D_{\theta',w'} F\|_4 \\
&\qquad\times  \big\| D_{r,z}D_{s,y}F - D_{r,z}D_{s,y'}F - D_{r,z'}D_{s,y}F + D_{r,z'}D_{s,y'}F  \big\|_4 \\
&\qquad\times  \big\| D_{\theta,w} D_{s', y}F - D_{\theta,w} D_{s', y'}F - D_{\theta,w'} D_{s', y}F + D_{\theta,w'} D_{s', y'}F \big\|_4.
 \end{align*}
 We have the same bound for the first term \eqref{1st_term_in} except for the multiplicative constant $2$, due to $2   \int_0^\infty dt e^{-t} =2$. Hence, the proof of Proposition \ref{propA2} is completed.
 \end{proof}

\begin{remark}\label{rmk_a} 
\begin{enumerate}
\item[\rm(i)]  Compared to the \textbf{regular case}, the expression of $\mathcal{A}$ is much more complicated in the \textbf{rough case}, where we need  not only to control   $ \| D_{r,z} u(t,x) \|_p$, but we also  have to estimate  the more notorious differences $\| D_{r',z}u(t,x) -  D_{r',z'} u(t,x)\|_p$ and  $  \| D_{r,z}D_{s,y} u(t,x) - D_{r,z}D_{s,y'} u(t,x)  - D_{r,z'}D_{s,y} u(t,x)  + D_{r,z'}D_{s,y'} u(t,x)    \|_p$ for the proof of part (3) in Theorem \ref{thm_QCLT}. This is the current paper's highlight in regard of the technicality.

\item[\rm(ii)] When $\gamma_0$ is the Dirac delta function at zero, the expression of $\mathcal{A}$ reduces to 
\begin{align*}
\mathcal{A} &= \int_{\R_+^3} ds d\theta   dr   \int_{\R^6} dydy'dzdz'dwdw' |y-y'|^{2H_1-2}  |z-z'|^{2H_1-2} |w-w'|^{2H_1-2}\\
&\qquad\times  \| D_{r,z}F -  D_{r,z'} F\|_4 \| D_{\theta,w}F -  D_{\theta,w'} F\|_4 \\
&\qquad\times  \big\| D_{r,z}D_{s,y}F - D_{r,z}D_{s,y'}F - D_{r,z'}D_{s,y}F + D_{r,z'}D_{s,y'}F  \big\|_4 \\
&\qquad\times  \big\| D_{\theta,w} D_{s, y}F - D_{\theta,w} D_{s, y'}F - D_{\theta,w'} D_{s', y}F + D_{\theta,w'} D_{s, y'}F \big\|_4.
\end{align*}
This case  corresponds to the white-in-time setting where the driving noise $W$ behaves like Brownian motion in time, so  as an alternative to using Proposition \ref{propA2}, one may adapt the general strategy based on the Clark-Ocone formula $($see {\rm\cite{HNV20})} to establish the quantitative CLT for $F_R(t)$; however, the roughness in space will anyway force one to  use the Gagliardo formulation $($see {\rm\eqref{def_gag})} of the inner product on $\kH$ when estimating the variance of $\langle DF_R(t), V_{t,R} \rangle_\kH$. This will lead to almost the same level of difficulty as in bounding the expression $\mathcal{A}$, while our computations will be done for a broader range of temporal correlation structures that include the Dirac delta function $($white-in-time case$)$. 
\end{enumerate}

\end{remark}

\subsection{Technical lemmas}
In this subsection, we provide some useful results related to the heat kernel and gamma functions. All of them will be used in Section \ref{sec_rough}. Let us first introduce a few more notation.  Set
 \begin{align}\label{delta}
\Delta_t(x,x')=p_t(x+x')-p_t(x),
\end{align}
\begin{align}\label{r}
R_t(x,x',x'')=p_t(x+x'-x'')-p_t(x+x')-p_t(x-x'')+p_t(x)
\end{align}
and
\begin{align}\label{n}
N_t(x)=t^{\frac{1}{8} - \frac{1}{2}H_0}|x|^{H_0-\frac{1}{4}}\1_{\{|x|\leq \sqrt{t}\}}+\1_{\{|x|>\sqrt{t}\}},
\end{align}
for all $t\in\R_+$ and $x,x',x''\in \R$. The next lemma provides further estimates for $\Delta_t$ and  $R_t$. This lemma, as well as operator $\Lambda$ (see \eqref{def_lmd} below), will be used in Proposition \ref{corodu} combined with the simplified formulas  in Lemmas \ref{produ} and \ref{prod2u}.

\begin{lemma}\label{tl1}
Let $\Delta_t$, $R_t$ and $N_t$ be given as in \eqref{delta} - \eqref{n}. Then, the following results hold:
\begin{align}\label{ineq_nn}
\int_{\R}  \big[ N_t(x) \big]^2 |x|^{2H_1-2} dx = 4 \frac{1-H_1}{1-2H_1} t^{H_1-\frac{1}{2}},
\end{align}
\begin{align}\label{delta0}
|\Delta_t(x,x')|\leq c_{\beta} \big( \Phi_{t,x'}^{\beta}p_{4t} \big)(x)
\end{align}
and
\begin{align}\label{r0}
|R_t(x,x',x'')|\leq c_{\beta}  (\Phi_{t,x'}^{\beta}\Phi_{t,-x''}^{\beta}p_{4t})(x)
\end{align}
for any $\beta\in [0,1]$, $t\in \R_+$, $x,x',x''\in \R$ with some constant $c_{\beta}$ depending only on $\beta$, where $\Phi_{t,x'}^{\beta}$ is the operator acting on $\cM(\R)$, the real measurable functions on $\R$, given by
\[
(\Phi_{t,x'}^{\beta}g)(x)=\theta_{x'}g(x)\1_{\{|x'|>\sqrt{t}\}}+g(x)\Big[\1_{\{|x'|>\sqrt{t}\}}+t^{-\frac{\beta}{2}}|x'|^{\beta}\1_{\{|x'|\leq \sqrt{t}\}}\Big],
\]
with $\theta$ denoting the shift function, that is, $(\theta_{x'}g)(x)=g(x+x')$.
\end{lemma}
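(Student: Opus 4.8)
The plan is to establish the three claims separately, each by a direct computation exploiting explicit Gaussian identities. For \eqref{ineq_nn}, I would simply substitute the definition \eqref{n} of $N_t$: the cross term vanishes because the two indicators have disjoint support, so
\[
\int_\R [N_t(x)]^2 |x|^{2H_1-2}dx = t^{\frac14 - H_0}\int_{|x|\le\sqrt t} |x|^{2H_0 - \frac12 + 2H_1 - 2}dx + \int_{|x|>\sqrt t}|x|^{2H_1-2}dx.
\]
Both integrals are elementary power integrals (convergent near $0$ because $H_0\ge\frac12$ forces $2H_0-\frac32+2H_1 > 2H_1 - \frac12 > -1$ using $H_1>0$, and convergent at $\infty$ because $2H_1-2<-1$). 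Computing each and adding will produce $4\frac{1-H_1}{1-2H_1}t^{H_1-\frac12}$ after collecting the powers of $t$; this is just careful bookkeeping.

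For \eqref{delta0} and \eqref{r0}, the key tool is the elementary pointwise bound comparing a shifted Gaussian density with an unshifted one of larger variance: there is $c_\beta$ such that for all $t>0$ and $a\in\R$,
\[
|p_t(x+a) - p_t(x)| \le c_\beta\, \big(t^{-\beta/2}|a|^\beta\,\1_{\{|a|\le\sqrt t\}} + \1_{\{|a|>\sqrt t\}}\big)\, p_{4t}(x) + \1_{\{|a|>\sqrt t\}}\, p_{4t}(x+a),
\]
which is exactly what the operator $\Phi^\beta_{t,a}$ encodes. One proves this by two regimes: when $|a|\le\sqrt t$, use $|p_t(x+a)-p_t(x)| \le |a|\sup_{|\theta|\le|a|}|\partial_x p_t(x+\theta)|$, bound $|\partial_x p_t(y)| \lesssim t^{-1/2}p_{2t}(y)$, and absorb the shift $|\theta|\le\sqrt t$ into a further Gaussian enlargement $p_{2t}(x+\theta)\lesssim p_{4t}(x)$; this yields the factor $t^{-1/2}|a|$, and interpolating with the trivial bound $|p_t(x+a)-p_t(x)|\le p_t(x+a)+p_t(x)\lesssim p_{4t}(x) + p_{4t}(x+a)$ (when $|a|\le\sqrt t$ even the shifted term is $\lesssim p_{4t}(x)$) gives the exponent $\beta\in[0,1]$; when $|a|>\sqrt t$, just use the triangle inequality directly. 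Then \eqref{r0} follows from \eqref{delta0} by writing $R_t(x,x',x'') = \Delta_t(x,x',x'') $ as a second difference, i.e. applying the shift-by-$x'$ operation to the function $x\mapsto \Delta_t(x,-x'')$ and using that $\Phi^\beta_{t,x'}$ commutes appropriately with $\Phi^\beta_{t,-x''}$ and with the Gaussian bound, so that iterating \eqref{delta0} costs one factor of each operator and replaces $p_{4t}$ by $p_{4t}$ again (one checks $\Phi^\beta_{t,x'}p_{8t}\lesssim \Phi^\beta_{t,x'}p_{4t}$ up to adjusting $c_\beta$, or simply runs the two-regime argument once more in the new variable).

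The main obstacle is getting the regime analysis for the shifted Gaussian clean enough that the enlargement from $p_t$ to $p_{4t}$ genuinely suffices in every case, in particular that $\theta_{x'}p_{4t}$ rather than $p_{4t}$ itself is needed only on $\{|x'|>\sqrt t\}$ — this is precisely why the definition of $\Phi^\beta_{t,x'}$ has the shift term $\theta_{x'}g(x)\1_{\{|x'|>\sqrt t\}}$ separated out. Once the one-shift bound \eqref{delta0} is pinned down with the correct variance constant, the second-difference bound \eqref{r0} is formally a routine iteration, and \eqref{ineq_nn} is independent elementary calculus.
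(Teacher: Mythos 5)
Your treatment of \eqref{ineq_nn} and \eqref{delta0} is essentially the paper's own argument (mean value theorem, the bound $|\partial_x p_t(y)|\lesssim t^{-1/2}p_{2t}(y)$, absorption of a shift of size at most $\sqrt t$ into $p_{4t}$, and then $u\le u^{\beta}$ for $u\in[0,1]$ to produce the exponent $\beta$), so those parts are fine; one small slip: integrability of $|x|^{2H_0+2H_1-\frac52}$ near the origin requires $H_0+H_1>\frac34$, i.e.\ exactly Hypothesis \ref{h2}, and does not follow from $H_0\ge\frac12$, $H_1>0$ alone as you claim.

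The genuine gap is in \eqref{r0}, in the regime $|x'|\le\sqrt t$ and $|x''|\le\sqrt t$. There the claimed bound is $c_\beta\, t^{-\beta}|x'|^{\beta}|x''|^{\beta}p_{4t}(x)$, a \emph{product} of two small factors, and this cannot be obtained by ``iterating'' \eqref{delta0}: writing $R_t(x,x',x'')$ as a difference of two $\Delta_t$'s and applying \eqref{delta0} to each term yields only $t^{-\beta/2}|x'|^{\beta}$ (or, by symmetry, $t^{-\beta/2}|x''|^{\beta}$, or geometric means with halved exponents), never the full product; a pointwise bound on $\Delta_t(\cdot,-x'')$ gives no control on its increments, so the ``shift-by-$x'$ applied to $\Delta_t(\cdot,-x'')$'' step needs an estimate on $\partial_x\Delta_t(\cdot,-x'')=p_t'(\cdot-x'')-p_t'(\cdot)$, i.e.\ a first-difference bound for $p_t'$ coming from $p_t''$ --- a genuinely new second-order estimate not contained in \eqref{delta0}. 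This is precisely what the paper's Case 4 supplies: it writes $R_t(x,x',x'')=\int_0^{-x''}[p_t'(x+x'+y)-p_t'(x+y)]\,dy$, applies the mean value theorem to get $p_t''$, and controls $p_t''$ by $t^{-1}p_{4t}$ after absorbing shifts of size at most $2\sqrt t$. (The product structure matters downstream: the $\Lambda$-bounds and \eqref{ineq_LAM} rely on obtaining $N(y')N(z')$.) Your fallback ``one checks $\Phi^{\beta}_{t,x'}p_{8t}\lesssim\Phi^{\beta}_{t,x'}p_{4t}$'' is also not available: $p_{8t}(x)/p_{4t}(x)=2^{-1/2}e^{x^2/(16t)}$ is unbounded, so a Gaussian of larger variance is not pointwise dominated by one of smaller variance, and an iteration that enlarges the variance twice would not give the stated $p_{4t}$ without redoing the exponential absorption carefully, as the paper does. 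The mixed cases ($|x'|\le\sqrt t<|x''|$ and vice versa) and the case where both increments exceed $\sqrt t$ do follow from your decomposition plus \eqref{delta0}, matching the paper's Cases 1--3; it is only the doubly-small case that needs the second-derivative argument you did not supply.
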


\begin{remark}\label{rem_0_1}  For any $t>0$ and $z\in\R$, operator $\Phi^\beta_{t, z}$ can be expressed as 
\[
\Phi^\beta_{t,z} g  = \1_{\{ | z| > \sqrt{t} \}  } \big( \theta_z + \mathbf{I} \big)g +  \1_{\{ | z| \leq \sqrt{t} \}  }  \big( |z| t^{-1/2} \big)^\beta \mathbf{I}g, \,\,\,\, g\in\cM(\R)
\]
with  $ \mathbf{I}  g = g$. It is easy to check that the following commutativity property holds
 $
\Phi^\beta_{t,z}    \Phi^\beta_{s,y}   = \Phi^\beta_{s,y}    \Phi^\beta_{t,z} 
$ on $\cM(\R)$. Furthermore, it is also clear that $\big(\Phi_{t,z}\1_\R\big)(0) \leq 2 N_t(z)$.
\end{remark}

\begin{proof}[Proof of Lemma \ref{tl1}]  Equality \eqref{ineq_nn} follows from direct computations, which we omit here.  In what follows, we first derive the estimate \eqref{delta0} for  $\Delta_t$. If $|x'|>\sqrt{t}$,  it follows immediately that 
\begin{align}\label{deltag}
 \big\vert \Delta_t(x,x') \big\vert \leq p_t(x+x')+p_t(x) \leq 2 \big[ p_{4t}(x+x')+p_{4t}(x)  \big].
\end{align}
On the other hand, suppose now that $|x'|\leq \sqrt{t}$. Notice firstly that
\[
|\Delta_t(x,x')| =|p_t(x+x')-p_t(x)| < \max\{ p_t(x), p_t(x+x') \}   \leq \frac{1}{\sqrt{2\pi}}t^{-\frac{1}{2}}.
\]
By the mean value theorem, there exists a point $z$ between $0$ and $x'$ such that,
\[
|\Delta_t(x,x')|=|p_t(x+x')-p_t(x)|  =|x'|\frac{|x+z|}{(2\pi)^{1/2}t^{3/2} }e^{-\frac{(x+z)^2}{2t}}.
\]
In view of the fact that for all $\alpha>0$,
\begin{align}\label{exp}
\sup_{x>0}x^{\alpha}e^{-x}=\alpha^{\alpha}e^{-\alpha},
\end{align}
we know that $\frac{|x+z|}{\sqrt{t}}e^{-\frac{(x+z)^2}{4t}}$ is uniformly bounded, from which it follows that
\[
|\Delta_{t}(x,x')|\leq   c_1 \frac{|x'|}{t}e^{-\frac{(x+z)^2}{4t}}.
\]
Since $\frac{(x+z)^2}{4t}\geq   \frac{x^2}{8t} - \frac{z^2}{4t}$ and $|z|\leq |x'|\leq \sqrt{t}$, we get 
\[
e^{-\frac{(x+z)^2}{4t}}\leq e^{\frac{z^2}{4t}}e^{-\frac{x^2}{8t}}\leq e^{\frac{1}{4}}e^{-\frac{x^2}{8t}},
\]
and thus
\[
|\Delta_t(x,x')|\leq   c\frac{|x'|}{\sqrt{t}}p_{4t}(x).
\]
for some universal constant $c>0$. As a consequence, if $|x'|\leq \sqrt{t}$, for any $\beta \in [0,1]$, we have
\begin{align}\label{deltal}
|\Delta_t(x,x')|\leq c_{\beta} t^{-\frac{\beta}{2}}|x'|^{\beta}p_{4t}(x),
\end{align}
where $c_{\beta}$ depends only on $\beta$. Putting together the estimates \eqref{deltag} and \eqref{deltal} yields  \eqref{delta0}.

\medskip

Next, we prove the inequality \eqref{r0} by considering  the following four cases.

\medskip

\textit{Case 1.} If $|x'|>\sqrt{t}$ and $|x''|>\sqrt{t}$,  we use the estimate
\begin{align}\label{r1}
|R_t(x,x',x'')|  \leq p_t(x+x'-x'')+p_t(x+x')+p_t(x-x'')+p_t(x).
\end{align}

\textit{Case 2.} If $|x'|\leq \sqrt{t}$ and $|x''|> \sqrt{t}$,    we deduce from  \eqref{deltal} that 
\begin{align}
|R_t(x,x',x'')|  \leq &|p_t(x+x'-x'')-p_t(x-x'')|+|p_t(x+x')+p_t(x)|\nonumber\\
=& |\Delta_t(x-x'',x')|+|\Delta_t(x,x')|    \nonumber\\
\leq & c_{\beta} t^{-\frac{\beta}{2}}|x'|^{\beta}\big(p_{4t}(x)+p_{4t}(x-x'')\big).
\end{align}

\textit{Case 3.} If  $|x'|> \sqrt{t}$ and $|x''|\leq \sqrt{t}$,  then  the same argument from  Case 2 leads to 
\begin{align}
|R_t(x,x',x'')|  \leq c_{\beta} t^{-\frac{\beta}{2}}|x''|^{\beta}\big(p_{4t}(x)+p_{4t}(x+x')\big).
\end{align}

\textit{Case 4.} If $|x'|\leq \sqrt{t}$ and $|x''|\leq \sqrt{t}$, we write 
\[
R_t(x,x',x'') = \int_{0}^{-x''} \big[ p'_t(x+x'+y) - p'_t(x+y) \big]dy
\]
with the convention $\int_0^{-a} f(t) dt = -\int_0^a f(-t)dt$ for $a > 0$.   By mean value theorem, we can write 
\begin{align*}
p'_t(x+x'+y) - p'_t(x+y) & = p_t''( x+ y + z') x' \\
 &= \frac{x'}{\sqrt{2\pi}}  e^{-\frac{(x+y+z')^2}{2t} }  \big( t^{-5/2} (x+y+z')^2 - t^{-3/2} \big),
\end{align*}
where $z'$ is some number between $0$ and $y$. Using  \eqref{exp} and $ (x+y+z')^2  \geq \frac{1}{2}x^2 - (y+z')^2$, we have
\begin{align*}
e^{-\frac{(x+y+z')^2}{2t} }    t^{-5/2} (x+y+z')^2  \leq c \, e^{-\frac{(x+y+z')^2}{4t} } t^{-3/2} \leq c\,  p_{4t}(x) t^{-1} e^{ \frac{(y+z')^2}{4t} }.
\end{align*}
Since $|y+z'| \leq 2|x''| \leq 2\sqrt{t}$, we get 
\[
e^{-\frac{(x+y+z')^2}{2t} }    t^{-5/2} (x+y+z')^2  \leq c\, p_{4t}(x) t^{-1} 
\]
and
\[
e^{-\frac{(x+y+z')^2}{2t} } t^{-3/2}  \leq c\, e^{ \frac{(y+z')^2}{2t} } \frac{p_{2t}(x)}{t} \leq c\,   \frac{p_{4t}(x)}{t}. 
\]
It follows that 
\begin{align}\label{r4}
|R_t(x,x',x'')|  \leq c_{\beta} t^{-\beta}|x'|^{\beta}|x''|^{\beta}p_{4t}(x),
\end{align}
provided   $|x'|\leq \sqrt{t}$ and $|x''|\leq \sqrt{t}$. Therefore, inequality \eqref{r0} follows from \eqref{r1} - \eqref{r4}. The proof of this lemma is completed.
\end{proof}
We also introduce the operator $\Lambda:\cM(\R)\times \cM(\R)\to \cM(\R^2)$ as follows. This operator  will be used in Section \ref{sec_rough}. Let $0<r<s$ and let $z',y'\in \R$. Then,
\begin{align}\label{def_lmd}
\Lambda_{r,z',s,y'}&(g_1,g_2)(x,y)=g_1(x)(\Phi_{s-r,y'}g_2)(y)N_r(z')+g_1(x)(\Phi_{s-r,y'}\Phi_{s-r,-z'}g_2)(y)\nonumber\\
&+(\Phi_{t-s,-y'}g_1)(x)(\theta_{y'}g_2)(y)N_r(z')+(\Phi_{t-s,-y'}g_1)(x)(\theta_{y'}\Phi_{s-r,-z'}g_2)(y),
\end{align}
for any $(g_1,g_2, x, y)\in \cM(\R)\times \cM(\R)\times\R^2$.  

\begin{remark}\label{rem_Lambda} It is not difficult to see that if $\| g_1\|_{L^1(\R)} =1$, then
\[
\int_\R \big( \Phi_{t,x'}g_1\big)(x)dx =  \big( \Phi_{t,x'} \1_\R\big)(x)
\]
with $\1_\R(x) =1$ for all $x\in\R$. As a result, we have 
\[
\int_{\R} dx \Lambda_{r,z',s,y'}(g_1,g_2)(x,y) =  \Lambda_{r,z',s,y'}(\1_\R,g_2)(0,y)
\]
provided $\| g_1\|_{L^1(\R)}=1$. 
\end{remark}

We complete this subsection by the following results about the gamma functions.
\begin{lemma}\label{lmm_gmm}
Let  
\[
\Gamma(x)=\int_{0}^{\infty} y^{x-1}e^{-y}dy, \quad  x>0
\]
be the usual  gamma function\footnote{This shall not be confusing with $\Gamma_t$ defined as in Lemma \ref{lem_redu}.}, then we have the following bounds. 
\begin{enumerate}[(i)]
\item[\rm(i)] {\rm(Stirling's formula; c.f. \cite[Theorem 1]{mg-15-jameson})} For all $x>0$, the following inequality holds,
\[
\sqrt{2\pi}x^{x-\frac{1}{2}}e^{-x}\leq \Gamma(x)\leq \sqrt{2\pi}x^{x-\frac{1}{2}}e^{-x+\frac{1}{12x}}.
\]
\item[\rm (ii)] {\rm(Asymptotic bound of the Mittag-Leffler function; c.f. \cite[Formula (1.8.10)]{elsevier-06-kilbas-strivastava-trujillo})}
\begin{align*}
E_{\alpha}(z)=\sum_{n=0}^{\infty}\frac{z^n}{\Gamma(\alpha n+1)}\leq c_1 \exp\big(c_2z^{\frac{1}{\alpha}}\big)
\end{align*}
for all $z\in \R_+$ and $\alpha\in (0,2)$, where $c_1,c_2>0$ depend only on $\alpha$. 
  \end{enumerate}
\end{lemma}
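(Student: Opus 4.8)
The plan is to handle the two parts separately. Part (i) is the classical two-sided Stirling bound, for which nothing new is needed; part (ii) will be deduced from (i) by a term-by-term comparison of the Mittag-Leffler series with the Taylor series of an exponential.

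\textbf{Part (i).} The cleanest route is Binet's first integral formula: writing $\mu(x) = \log\Gamma(x) - (x-\tfrac12)\log x + x - \tfrac12\log(2\pi)$, one has the representation
\[
\mu(x) = \int_0^\infty \Big( \tfrac12 - \tfrac1t + \tfrac1{e^t-1}\Big) \tfrac{e^{-xt}}{t}\,dt ,
\]
whose integrand is nonnegative and satisfies $0 < \tfrac12 - \tfrac1t + \tfrac1{e^t-1} < \tfrac{t}{12}$ for all $t>0$; integrating against $\tfrac{e^{-xt}}{t}\,dt$ yields $0 < \mu(x) < \tfrac1{12x}$, which is exactly the claimed inequality after exponentiation. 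Alternatively one may simply invoke the elementary proof in \cite{mg-15-jameson}.

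\textbf{Part (ii).} Fix $\alpha\in(0,2)$ and put $t = z^{1/\alpha}$. Since $E_\alpha$ is continuous and positive, the estimate is trivial for $z$ in a bounded interval, so we may assume $t\ge 1$. Using the lower bound from part (i) together with $(\alpha n+1)^{\alpha n+\frac12}\ge (\alpha n)^{\alpha n}\sqrt{\alpha n}$, we get for $n\ge 1$
\[
\Gamma(\alpha n+1) \ge \sqrt{2\pi}\,(\alpha n+1)^{\alpha n+\frac12} e^{-(\alpha n+1)} \ge c_0\sqrt{n}\,\Big(\frac{\alpha n}{e}\Big)^{\alpha n}, \qquad c_0 := e^{-1}\sqrt{2\pi\alpha},
\]
so that the $n$-th term of $E_\alpha(z)$ is at most $\tfrac{1}{c_0\sqrt n}\big(\tfrac{te}{\alpha n}\big)^{\alpha n}$ (here $z^n = t^{\alpha n}$ was used). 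Split the sum at $n_0 := \lceil 2et/\alpha\rceil$. For $n\ge n_0$ one has $\tfrac{te}{\alpha n}\le \tfrac12$, hence $\big(\tfrac{te}{\alpha n}\big)^{\alpha n}\le 2^{-\alpha n}$ and $\sum_{n\ge n_0}\tfrac{1}{c_0\sqrt n}2^{-\alpha n}\le c_0^{-1}(1-2^{-\alpha})^{-1}=:c_3$. For $1\le n<n_0$, writing $g(x)=\alpha x\big(1+\log t-\log(\alpha x)\big)=\log\big((te/(\alpha x))^{\alpha x}\big)$, a one-line computation shows $g$ is maximized over $x>0$ at $\alpha x=t$ with value $g=t$, so each of these at most $n_0$ terms is $\le c_0^{-1}e^t$, whence $\sum_{1\le n<n_0}\tfrac{1}{c_0\sqrt n}\big(\tfrac{te}{\alpha n}\big)^{\alpha n}\le c_4(1+t)e^t$ for a constant $c_4=c_4(\alpha)$. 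Adding the $n=0$ term, equal to $1$, and using $1+t\le e^t$, we obtain $E_\alpha(z)\le 1+c_3+c_4 e^{2t}\le c_1 e^{2 z^{1/\alpha}}$, which is the asserted bound with $c_2=2$ and $c_1=1+c_3+c_4$ depending only on $\alpha$.

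The only genuine difficulty is the index mismatch in part (ii): the exponent $\alpha n$ runs over a non-integer arithmetic progression, whereas the Taylor coefficients of $e^{c_2 z^{1/\alpha}}$ are indexed by integers, so a direct matching of terms is not available. The device above — bounding the ``bulk'' of the series by its continuous maximum $e^{t}$ over a window of width $O(t)$ and controlling the remaining tail by a geometric series — circumvents this at the cost of the harmless factor $(1+t)$, which is reabsorbed into the exponential.
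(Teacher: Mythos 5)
Your proof is correct, but it differs from the paper in that the paper offers no proof at all for this lemma: both parts are simply cited from the literature, (i) from Jameson's elementary proof of Stirling's two-sided bound and (ii) from the standard asymptotics of the Mittag--Leffler function in Kilbas--Srivastava--Trujillo (formula (1.8.10), which in fact gives the sharper statement $E_\alpha(z)\sim \alpha^{-1}e^{z^{1/\alpha}}$ as $z\to+\infty$, so that $c_2$ can be taken arbitrarily close to $1$). What you do instead is derive (ii) self-containedly from (i): the Stirling lower bound $\Gamma(\alpha n+1)\gtrsim \sqrt{n}\,(\alpha n/e)^{\alpha n}$, the split of the series at $n_0\asymp t/\alpha$ with $t=z^{1/\alpha}$, the observation that $g(x)=\alpha x(1+\log t-\log(\alpha x))$ is maximized at $\alpha x=t$ with value $t$ (so each bulk term is at most $c_0^{-1}e^{t}$, and the $O(t)$ window contributes $\lesssim (1+t)e^{t}$), and a geometric tail bound for $n\ge n_0$; all steps check out, including the absorption of $(1+t)$ into $e^{t}$. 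The price is the weaker constant $c_2=2$, which is harmless here since the paper only needs $c_1,c_2$ depending on $\alpha$ (they are swallowed into $C_1(t)=c_1\exp(c_2t^{(2H_0+H_1-1)/(2H_1)})$); the gain is that the lemma no longer rests on an external asymptotic expansion but only on the Stirling bound already quoted in part (i). Your sketch of (i) via Binet's first formula with $0<\tfrac12-\tfrac1t+\tfrac1{e^t-1}<\tfrac{t}{12}$ is also the standard argument and is fine, though citing Jameson, as the paper does, suffices.
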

As a result of Lemma \ref{lmm_gmm} (i), we can deduce the following corollary.
\begin{corollary}\label{coro_gmm}
Let $\alpha_1,\alpha_2,\beta_1,\beta_2\in [0,\infty)$, and let $\gamma_1,\gamma_2\in \R$. Then, the following inequality holds for $n$ large enough
\begin{align}\label{coro_gmm1}
\Gamma(\alpha_1 n+\gamma_1)^{\beta_1}\Gamma(\alpha_2n+\gamma_2)^{\beta_2}\leq c_1c_2^n\Gamma[(\alpha_1\beta_1+\alpha_2\beta_2)n+1].
\end{align}
Moreover, if $\alpha_1\beta_1<\alpha_2\beta_2$, then, for $n$ large enough,
\begin{align}\label{coro_gmm2}
\frac{\Gamma(\alpha_1 n+\gamma_1)^{\beta_1}}{\Gamma(\alpha_2n+\gamma_2)^{\beta_2}}\leq \frac{c_1c_2^n}{\Gamma[(\alpha_2\beta_2-\alpha_1\beta_1)n+1]}.
\end{align}
In both inequalities \eqref{coro_gmm1} and \eqref{coro_gmm2},  $c_1,c_2>0$ are independent of $n$.
\end{corollary}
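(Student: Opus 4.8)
The plan is to take logarithms and reduce both inequalities to an elementary asymptotic comparison, which is then settled by Stirling's formula (Lemma~\ref{lmm_gmm}(i)). Observe first that if $\alpha_i=0$ with $\beta_i>0$ the corresponding factor is the constant $\Gamma(\gamma_i)^{\beta_i}$ (which forces $\gamma_i>0$, as it must for the statement to be meaningful) and is absorbed into $c_1$, while if $\beta_i=0$ the factor equals $1$; hence we may assume each relevant $\alpha_i>0$, so that for $n$ large all the gamma arguments appearing below are positive. For \eqref{coro_gmm1} it then suffices to produce constants $c_1,c_2>0$, independent of $n$, with
\[
\beta_1\log\Gamma(\alpha_1 n+\gamma_1)+\beta_2\log\Gamma(\alpha_2 n+\gamma_2)-\log\Gamma\big((\alpha_1\beta_1+\alpha_2\beta_2)n+1\big)\le \log c_1+n\log c_2
\]
for all large $n$.

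First I would substitute Stirling's formula in the form $\log\Gamma(x)=(x-\tfrac12)\log x-x+\tfrac12\log(2\pi)+\rho(x)$, with $0\le\rho(x)\le\tfrac{1}{12x}$, into each of the three terms, and expand $\log(\alpha_i n+\gamma_i)=\log n+\log\alpha_i+O(1/n)$ and $\log\big((\alpha_1\beta_1+\alpha_2\beta_2)n+1\big)=\log n+\log(\alpha_1\beta_1+\alpha_2\beta_2)+O(1/n)$. The key point is that the $n\log n$ contributions equal $(\alpha_1\beta_1+\alpha_2\beta_2)\,n\log n$ on both sides and hence cancel exactly, and similarly the linear terms $-\alpha_i\beta_i n$ on the left cancel $-(\alpha_1\beta_1+\alpha_2\beta_2)n$ on the right. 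What remains of the left-hand side after these cancellations is an expression of the form $an+b\log n+O(1)$ with explicit constants $a,b$ (coming from $n\log\alpha_i$, $\gamma_i\log n$, the $-\tfrac12\log(\cdot)$ terms, the constants $\tfrac12\log(2\pi)$, and the $\rho$-remainders), and any such expression is $\le\log c_1+n\log c_2$ once, say, $c_2>e^{a}$ and $c_1$ is taken large. This proves \eqref{coro_gmm1}.

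For \eqref{coro_gmm2} I would apply \eqref{coro_gmm1} with the triple $(\alpha_2,\beta_2,\gamma_2)$ replaced by $(\alpha_2\beta_2-\alpha_1\beta_1,\,1,\,1)$ --- legitimate because the hypothesis $\alpha_1\beta_1<\alpha_2\beta_2$ makes this new exponent base positive --- to obtain
\[
\Gamma(\alpha_1 n+\gamma_1)^{\beta_1}\,\Gamma\big((\alpha_2\beta_2-\alpha_1\beta_1)n+1\big)\le c_1 c_2^n\,\Gamma\big(\alpha_2\beta_2 n+1\big).
\]
It then remains to compare $\Gamma(\alpha_2\beta_2 n+1)$ with $\Gamma(\alpha_2 n+\gamma_2)^{\beta_2}$; a further pass through Stirling gives $\Gamma(\alpha_2\beta_2 n+1)\le c_1'(c_2')^n\,\Gamma(\alpha_2 n+\gamma_2)^{\beta_2}$ for $n$ large, since the $n\log n$ coefficients again agree (both equal $\alpha_2\beta_2$), the linear terms $-\alpha_2\beta_2 n$ agree, and the only genuine discrepancy is an $O(n)$ term of size $\alpha_2\beta_2 n\log\beta_2$, absorbed into $(c_2')^n$. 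Dividing and relabeling the constants yields \eqref{coro_gmm2}; alternatively one may rerun the Stirling bookkeeping directly on the ratio.

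The argument is entirely routine, and I do not expect a substantive obstacle. The only two points needing attention are: (a) that the arguments of every gamma function occurring are positive for $n$ large --- which is exactly where $\alpha_i\ge 0$ (with $\gamma_i>0$ in the degenerate case $\alpha_i=0$) and, for \eqref{coro_gmm2}, $\alpha_1\beta_1<\alpha_2\beta_2$ enter; and (b) that the $n\log n$ and $n$ coefficients on the two sides coincide, this exact cancellation being precisely what makes the residual error absorbable into a factor of the form $c_1 c_2^n$. Perhaps the cleanest organization is to first record the two-sided estimate $c_1^{-1}c_2^{-n}\,\Gamma(\alpha\beta n+1)\le\Gamma(\alpha n+\gamma)^{\beta}\le c_1 c_2^n\,\Gamma(\alpha\beta n+1)$ for a single triple $(\alpha,\beta,\gamma)$, and then deduce both displayed inequalities by multiplying such estimates together.
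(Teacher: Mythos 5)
Your proposal is correct and follows essentially the same route as the paper: both arguments rest on the two-sided Stirling bounds of Lemma \ref{lmm_gmm}(i), match the dominant $n\log n$ and linear-in-$n$ contributions on the two sides, and absorb the remaining geometric-in-$n$ discrepancy into $c_1c_2^n$ (the paper works multiplicatively on \eqref{coro_gmm1} and declares \eqref{coro_gmm2} analogous, whereas you work with logarithms and deduce \eqref{coro_gmm2} from \eqref{coro_gmm1} plus one more Stirling comparison). No gap; the extra care you take with the degenerate cases $\alpha_i=0$ or $\beta_i=0$ is a minor refinement, not a different method.
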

\begin{proof}
We only show inequality \eqref{coro_gmm1}. Inequality \eqref{coro_gmm2} can be proved in a similar way. If $\beta_1=\beta_2=0$, then \eqref{coro_gmm1} is trivial. Assume that $\beta_1+\beta_2>0$. Using Lemma \ref{lmm_gmm} (i), we have
\begin{align*}
{\rm LHS}:=\Gamma(\alpha_1 n+\gamma_1)^{\beta_1}\Gamma(\alpha_2n+\gamma_2)^{\beta_2}\leq &\big(\sqrt{2\pi}(\alpha_1 n+\gamma_1)^{\alpha_1 n+\gamma_1-\frac{1}{2}}e^{-\alpha_1 n+\gamma_1+\frac{1}{12(\alpha_1 n+\gamma_1)}}\big)^{\beta_1}\\
&\times \big(\sqrt{2\pi}(\alpha_2 n+\gamma_2)^{\alpha_2 n+\gamma_2-\frac{1}{2}}e^{-\alpha_2 n+\gamma_2+\frac{1}{12(\alpha_2 n+\gamma_2)}}\big)^{\beta_2}.
\end{align*}
Choose $n\geq \max\{|\frac{\gamma_1}{\alpha_1}|+1,|\frac{\gamma_2}{\alpha_2}|+1\}$, then we have $1\leq \alpha_1 n+\gamma_1\leq 2\alpha_1 n$ and $\alpha_2 n+\gamma_2\leq 2\alpha_2 n$. This implies that
\begin{align}\label{coro_gmm11}
{\rm LHS}\leq &(2\pi)^{\frac{1}{2}(\beta_1+\beta_2)}e^{(\gamma_1+1)\beta_1+(\gamma_2+1)\beta_2} (2\alpha_1)^{(\alpha_1 n+\gamma_1-\frac{1}{2})\beta_1}(2\alpha_2)^{(\alpha_2 n+\gamma_2-\frac{1}{2})\beta_2}\nonumber\\
&\times n^{(\alpha_1\beta_1+\alpha_2\beta_2)n+(\gamma_1-\frac{1}{2})\beta_1+(\gamma_2-\frac{1}{2})\beta_2}e^{-(\alpha_1\beta_1+\alpha_2\beta_2)n}\nonumber\\
\leq &c_1c_2^n n^{(\gamma_1-\frac{1}{2})\beta_1+(\gamma_2-\frac{1}{2})\beta_2+\frac{1}{2}} [(\alpha_1\beta_1+\alpha_2\beta_2)n]^{(\alpha_1\beta_1+\alpha_2\beta_2)n-\frac{1}{2}}e^{-(\alpha_1\beta_1+\alpha_2\beta_2)n}.
\end{align}
As $\lim_{n\uparrow \infty}\frac{n^{\alpha}}{2^n}=0$ for all $\alpha\in \R$, inequality \eqref{coro_gmm1} is  then consequence of \eqref{coro_gmm11} and Lemma \ref{lmm_gmm} (i) for $n$ large enough. The proof of this corollary is completed.
\end{proof}

\section{Regular cases under Hypothesis \ref{h1}} \label{sec_regular}

In this section, we prove the first two error bounds in \eqref{main_TV}. As already mentioned in the introduction, the majority of this section will devoted to proving the following $L^p$ estimates of Malliavin derivatives.

\begin{theorem} \label{M_D_regular}
{\rm Assume that Hypothesis \ref{h1} holds true. Given $(t,x)\in (0,\infty)\times\R^d$ and $(p,m)\in [2,\infty)\times\N^\ast$,  then for almost every $(\bs_m, \by_m) \in ([0,t]\times\R^d)^m$, we have
\begin{align}\label{M_D_reg_p}
 \big\| D^m_{\bs_m,\by_m}u(t,x) \big\|_p \leq C(t) f_{t,x,m}\big(\bs_m,\by_m\big),
\end{align}
where $f_{t,x,m}$ is the chaos coefficient defined as in \eqref{cos2} and the constant $C(t)$ depends on $(t,p,m, \gamma_0,\gamma_1)$ and is increasing in $t$.
}
\end{theorem}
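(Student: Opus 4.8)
We plan to prove Theorem~\ref{M_D_regular} directly from the Wiener chaos expansion of $u$, thereby bypassing any martingale or Burkholder--Davis--Gundy argument (unavailable here). Differentiating \eqref{cos1}--\eqref{cos2} term by term and using that each $f_{t,x,n}$ is symmetric, one has
\[
D^m_{\bs_m,\by_m}u(t,x)=\sum_{n\ge m}\frac{n!}{(n-m)!}\,I_{n-m}\big(f_{t,x,n}(\bullet,\bs_m,\by_m)\big),
\]
where $f_{t,x,n}(\bullet,\bs_m,\by_m)$ is the symmetric kernel in $n-m$ variables obtained by freezing the last $m$ pairs of arguments of $f_{t,x,n}$ at $(\bs_m,\by_m)$. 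Since $D^m$ returns a symmetric function of its $m$ arguments and $f_{t,x,m}$ is symmetric, it suffices to prove \eqref{M_D_reg_p} on the simplex $0<s_1<\cdots<s_m<t$, on which
\[
m!\,f_{t,x,m}(\bs_m,\by_m)=p_{t-s_m}(x-y_m)\prod_{j=2}^{m}p_{s_j-s_{j-1}}(y_j-y_{j-1}).
\]
(Alternatively, one could induct on $m$ via the linear Skorohod equation $D_{s,y}u(t,x)=p_{t-s}(x-y)u(s,y)+\int_s^t\int_{\R^d}p_{t-r}(x-z)D_{s,y}u(r,z)W(dr,dz)$; the chaos route is cleaner to make quantitative in the colored-time setting.)

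Next, Minkowski's inequality, the hypercontractivity estimate \eqref{hyper_ineq}, and $\|I_k(g)\|_2\le\sqrt{k!}\,\|g\|_{\kH^{\otimes k}}$ (symmetrization only decreases the $\kH^{\otimes k}$-norm) reduce the statement to a deterministic kernel estimate:
\[
\big\|D^m_{\bs_m,\by_m}u(t,x)\big\|_p\le\sum_{n\ge m}\frac{n!}{(n-m)!}(p-1)^{\frac{n-m}{2}}\sqrt{(n-m)!}\;\big\|f_{t,x,n}(\bullet,\bs_m,\by_m)\big\|_{\kH^{\otimes(n-m)}},
\]
so it is enough to show, uniformly in $(\bs_m,\by_m)$ on the simplex,
\[
\big\|f_{t,x,n}(\bullet,\bs_m,\by_m)\big\|_{\kH^{\otimes(n-m)}}\le C(t)^{\,n}\binom{n}{m}^{-1}b_{n-m}(t)\,f_{t,x,m}(\bs_m,\by_m),
\]
with weights $b_k(t)$ small enough that $\sum_{k\ge0}(p-1)^{k/2}\sqrt{k!}\,C(t)^{k}b_k(t)<\infty$. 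Granting this, and using $\tfrac{n!}{(n-m)!}\binom{n}{m}^{-1}=m!$, the series sums to $C(t)\,f_{t,x,m}(\bs_m,\by_m)$ with the $m$-dependent constants absorbed into $C(t)$; monotonicity of $C(t)$ in $t$ is immediate because enlarging $t$ only enlarges the domains of integration.

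The core of the argument is the displayed kernel bound. Using the explicit simplex expression for $f_{t,x,n}$, decompose $f_{t,x,n}(\bullet,\bs_m,\by_m)$, as a function of the $n-m$ free space-time variables, according to how their time coordinates interleave with $s_1<\cdots<s_m$; on each such piece the product of heat kernels factorizes as a tensor product of ``bridges'', each bridge linking consecutive frozen points $(s_j,y_j)$ and $(s_{j+1},y_{j+1})$ through the free variables in the corresponding time window (with $(s_0,y_0)$ absent and $(s_{m+1},y_{m+1})=(t,x)$). Since $\gamma_0,\gamma_1\ge0$, the norm $\|\cdot\|_{\kH^{\otimes(n-m)}}$ is monotone and multiplicative on such tensor products, so it suffices to estimate each bridge separately. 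This is done with the heat-semigroup identity $\int_{\R^d}p_{t-r}(x-z)p_{r-s}(z-y)\,dz=p_{t-s}(x-y)$, which lets one recover a quantity comparable to the Gaussian factor $f_{t,x,m}(\bs_m,\by_m)$ in front, together with the standard single-point bounds on $\|f_{\tau,z,k}\|_{\kH^{\otimes k}}$---the ones that make $\E[u(\tau,z)^2]$ finite under Dalang's condition \eqref{D_cond}, and Lemma~\ref{lem_redu} in the case $\gamma_0=\delta$---which supply the summable tail $b_k(t)$.

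I expect the interleaving-and-bridge estimate to be the main obstacle, and it is precisely where the colored-in-time structure is felt: the temporal correlation $\gamma_0$ couples the frozen times $s_1,\dots,s_m$ with the free times across all windows, and one must absorb these couplings into the geometric factor $C(t)^n$ and the summable weights $b_k(t)$ \emph{uniformly} in $(\bs_m,\by_m)$, while simultaneously keeping the Gaussian spatial localization that produces the $f_{t,x,m}$ prefactor. The remaining ingredients---the differentiated chaos expansion, Minkowski's inequality, and \eqref{hyper_ineq}---are routine; it is the bookkeeping of these frozen symmetric kernels together with the $\kH^{\otimes k}$-norm estimates that forms the ``highly technical computations'' referred to in the introduction.
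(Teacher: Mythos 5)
Your skeleton coincides with the paper's own proof: chaos expansion of $D^m_{\bs_m,\by_m}u(t,x)$, reduction by Minkowski's inequality and \eqref{hyper_ineq}, decomposition according to how the free times interleave with the frozen $s_1<\dots<s_m$, tensor factorization into bridge pieces pinned at consecutive frozen points plus an unpinned tail piece $f_{s_1,y_1,n-i_m}$. The genuine gap is that the decisive step --- the bound of each \emph{pinned} bridge kernel by $p_{s_{j+1}-s_j}(y_{j+1}-y_j)$ times constants summable in the chaos order --- is only asserted, and the tool you invoke for it cannot deliver it. In the $\kH^{\otimes k}$-norm (or the $\kX^{\otimes k}$-norm after Lemma \ref{lem_redu}) the intermediate space variables are paired through $\gamma_1$, equivalently through $\mu(d\xi)$, not integrated against Lebesgue measure, so the Chapman--Kolmogorov identity $\int_{\R^d}p_{t-r}(x-z)p_{r-s}(z-y)\,dz=p_{t-s}(x-y)$ cannot be used to collapse a chain of heat kernels pinned at both ends. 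What is actually needed is the tilting identity \eqref{tilt}, which rewrites the pinned chain as $p_{s_{j+1}-s_j}(y_{j+1}-y_j)$ times a product of bridge transition densities; only then is the prefactor $f_{t,x,m}(\bs_m,\by_m)$ extracted exactly, and the remaining factor still has to be controlled by the explicit Fourier computation \eqref{ine_fcdt} together with the maximal principle, yielding \eqref{bdd_fkk22} and \eqref{bdd_fkk1}. The ``single-point'' bounds you cite (the ones behind $\E[u(t,x)^2]<\infty$, i.e.\ \eqref{lem33_HHNT}) apply to the unpinned tail only, not to the bridges, so they do not close this step.

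A second, related gap is the summability mechanism. You require weights $b_k(t)$ with $\sum_k (p-1)^{k/2}\sqrt{k!}\,C(t)^k b_k(t)<\infty$, i.e.\ super-geometric decay of the kernel bounds, but under Hypothesis \ref{h1} the bridge estimates do not in general produce such decay: the paper's bound \eqref{Q_H121} leads (when $C_N>0$ for all $N$) to $\mathcal{Q}^{\X}_{n,m}\le c_1c_2^n C_N^{n-m}e^{D_Nt/C_N}f_{t,x,m}^2$, and convergence of the chaos series is forced by Dalang's condition through the freedom to choose $N$ so large that $c_2\sqrt{(p-1)\Gamma_t}\,C_N<1$, with $C_N,D_N$ as in \eqref{CD_N}; factorial decay is available only in the degenerate case $C_N=0$. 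Without this $C_N/D_N$ bookkeeping (or an equivalent smallness argument uniform in $(\bs_m,\by_m)$), the convergence of your series --- which, together with the bridge estimate, is the whole content of the theorem beyond routine manipulations --- remains unproved. In short, the plan has the right architecture, identical to the paper's, but it leaves the paper's main technical work (tilted-bridge Fourier estimates and the $N$-dependent summation) as an unfilled hole, and the identity proposed in its place would fail.
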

The proof of Theorem \ref{M_D_regular} is deferred to Section \ref{sec32_MD}. In Section \ref{sec31_reg}, we prove the  first two error bounds in \eqref{main_TV} by using Theorem \ref{M_D_regular}.

\subsection{Proof of quantitative CLTs in regular cases} \label{sec31_reg}
 Assume Hypothesis \ref{h1}. With 
\[
F_R(t) = \int_{B_R} [ u(t,x) - 1 ] dx \quad\text{and}\quad  \sigma_R(t) = \sqrt{ \text{Var}\big( F_R(t) \big)},
\]
we have the following facts from \cite{NZ19BM}.
\begin{enumerate}[(i)]
\item Under Hypothesis \ref{h3a},   $\sigma_R(t) \sim R^{d/2}$.
\item Under Hypothesis \ref{h3b},  one has  $\sigma_R(t) \sim R^{d- \frac{\beta}{2} }$.  
\end{enumerate}

\subsubsection{Proof of \texorpdfstring{\eqref{main_TV}}{} under Hypotheses \texorpdfstring{\ref{h1} and \ref{h3a}}{} } \label{sec_311} Using   Minkowski's inequality, we have
\[
\| D_{r,z}D_{s,y}F_R(t) \| _4 = \left\| \int_{B_R}  D_{r,z}D_{s,y} u(t,x) dx \right\|_4 \leq  \int_{B_R}  \big\| D_{r,z}D_{s,y} u(t,x) \big\|_4  dx.  
\]
Then it follows from   \eqref{M_D_reg_p} that
\begin{align}\label{USE1}
\| D_{r,z}D_{s,y}F_R(t) \| _4 \lesssim  \int_{B_R}  f_{t,x,2}(r,z, s,y) dx,
 \end{align}
 with
 \[
f_{t,x,2}(r,z,s,y) = \frac{1}{2} \left[ p_{t-r}(x-z) p_{r-s}(z-w)\mathbf{1}_{\{ r > s\}} + p_{t-s}(x-y) p_{s-r}(z-y)\mathbf{1}_{\{ r < s\}}\right].
 \]
In the same way, we have
\begin{align}\label{USE2}
\| D_{s,y}F_R(t) \| _4 \lesssim  \int_{B_R}  p_{t-s}(x-y)dx,
 \end{align}
 where the implicit constants in \eqref{USE1} and \eqref{USE2} do not depend on $(R, r,z,s,y)$ and are increasing in $t$.

Apply Proposition \ref{propA1} and plugging \eqref{USE1} and \eqref{USE2} into the expression of $\mathcal{A}$,  we get
 \begin{align*}
\mathcal{A}&\lesssim  \int_{[0,t]^6\times\R^{6d}} drdr'dsds' d\theta d\theta' dzdz' dydy' dwdw' \gamma_0(r-r') \gamma_0(s-s') \gamma_0(\theta - \theta')\gamma_1(z-z')     \\
&  \quad\times \int_{B_R^4}d\bx_4 \gamma_1(w-w')  \gamma_1(y-y') f_{t,x_1,2}(r,z, \theta, w)  f_{t,x_2,2}(s,y, \theta', w') p_{t-r'}(x_3- z')\\
&\qquad\times  p_{t-s'}(x_4 - y') .
  \end{align*}
 Taking the expression of $ f_{t,x,2}$ into account, we need to consider four terms depending on $r>\theta$ or not, and depending on $ s>\theta'$ or not. Since the computations are similar, it suffices to provide the estimate for case $r>\theta$ and $s>\theta'$. In other words, we need to show that 
  \begin{align*}
\mathcal{A}^\ast:&=  \int_{[0,t]^6\times\R^{6d}} drdr'dsds' d\theta d\theta' dzdz' dydy' dwdw' \gamma_0(r-r') \gamma_0(s-s') \gamma_0(\theta - \theta') \\
& \quad   \times \int_{B_R^4} d\bx_4 p_{t-r}(x_1-z) p_{r-\theta}(z-w)  p_{t-s}(x_2-y) p_{s-\theta'}(y-w')  p_{t-r'}(x_3- z') \\
&  \qquad \times     p_{t-s'}(x_4 - y') \gamma_1(w-w')  \gamma_1(y-y')  \gamma_1(z-z')  \\
&   \lesssim R^d.
  \end{align*}
 In fact, the above estimate follows from integrating with respect to $dx_1$, $dx_2$, $dx_4$, $dy'$, $dy$, $dw'$, $dw$, $dz$, $dz'$, $dx_3$ one by one and  using the local integrability of $\gamma_0$. The desired bound follows immediately. \hfill  $\square$

\subsubsection{Proof of \texorpdfstring{\eqref{main_TV}}{} under Hypotheses \texorpdfstring{\ref{h1} and \ref{h3b}}{} }  Similarly as in   Section \ref{sec_311}, we need to show $\mathcal{A}^\ast\lesssim R^{4d-3\beta}$. Making the change of variables
 \[
 (\bx_4, z,z', y, y', w, w') \to  R(\bx_4, z,z', y, y', w, w') 
 \]
and using the scaling properties of the Riesz and heat kernels\footnote{$p_t(Rz) = R^{-d} p_{tR^{-2}}(z)$ for $z\in\R^d$.} yields
  \begin{align*}
\mathcal{A}^\ast&=R^{4d-3\beta}   \int_{[0,t]^6} drdr'dsds' d\theta d\theta'  \gamma_0(r-r') \gamma_0(s-s') \gamma_0(\theta - \theta')\mathbf{S}_R,
  \end{align*}
  with
  \begin{align*}
  \mathbf{S}_R:&= \int_{B_1^4\times\R^{6d}} d\bx_4 dzdz' dydy' dwdw' | w-w'|^{-\beta}  | y-y'|^{-\beta}  |z-z'|^{-\beta}  \\
   &\quad \times p_{\frac{t-r}{R^2}}(x_1-z) p_{\frac{r-\theta}{R^2}}(z-w)  p_{\frac{t-s}{R^2}}(x_2-y) p_{\frac{s-\theta'}{R^2}}(y-w')  p_{\frac{t-r'}{R^2}}(x_3- z') p_{\frac{t-s'}{R^2}}(x_4 - y').
\end{align*} 
Making the following change of variables 
\[
\pmb{\eta}_6 = (z-x_1, z-w, y-x_2, y-w', z'-x_3, y'-x_4)
\]
(so $w = \eta_1 - \eta_2 + x_1, w' = \eta_3-\eta_4+x_2$) yields
\begin{align*}
  \mathbf{S}_R &= \int_{B_1^4\times\R^{6d}} d\bx_4 d\pmb{\eta}_6  p_{\frac{t-r}{R^2}}(\eta_1) p_{\frac{r-\theta}{R^2}}(\eta_2)  p_{\frac{t-s}{R^2}}(\eta_3) p_{\frac{s-\theta'}{R^2}}(\eta_4)  p_{\frac{t-r'}{R^2}}(\eta_5) p_{\frac{t-s'}{R^2}}(\eta_6) \\
  &\quad \times |  \eta_1 - \eta_2 -\eta_3 +\eta_4 + x_1-x_2   |^{-\beta}  | \eta_3- \eta_6 +x_2-x_4 |^{-\beta}  | \eta_1 - \eta_5 + x_1 - x_3|^{-\beta}  \\
  &=  \int_{B_1^4} d\bx_4 \E\bigg[  \Big|  \frac{\sqrt{t-r}}{R} Z_1 - \frac{\sqrt{r-\theta}}{R}  Z_2 - \frac{\sqrt{t-s}}{R}    Z_3 +  \frac{\sqrt{s-\theta'}}{R}Z_4 + x_1-x_2   \Big|^{-\beta} \\
  &\qquad \times  \Big| \frac{\sqrt{t-s}}{R}    Z_3-  \frac{\sqrt{t-s'}}{R} Z_6 +x_2-x_4 \Big|^{-\beta}  \Big|  \frac{\sqrt{t-r}}{R} Z_1 - \frac{\sqrt{t-r'}}{R}Z_5 + x_1 - x_3  \Big|^{-\beta}             \bigg],
    \end{align*}
where $Z_1, \dots ,Z_6$ are i.i.d. standard Gaussian vectors on $\R^d$. Notice that
\begin{align*}
\mathcal{K}:= &\sup_{z\in\R^d} \int_{B_1} |y +z|^{-\beta} dy \leq \mathbf{1}_{\{ |z| \leq 2\}}  \int_{B_3} |y|^{-\beta} dy +  \mathbf{1}_{\{ |z| > 2\}}  \int_{B_1} \big(  |z| -|y|\big) ^{-\beta} dy \\
\leq &\int_{B_3} |y|^{-\beta} dy +  \int_{B_1} 1dy<\infty.
\end{align*}
Therefore, we deduce that
  \begin{align*}
  \mathbf{S}_R &\leq \mathcal{K}^3  \int_{B_1} 1~ dx_1 =  \mathcal{K}^3 \text{Vol}(B_1).
    \end{align*}
 So $\mathcal{A}^\ast\leq  \mathcal{K}^3   \text{Vol}(B_1)  (t\Gamma_t)^3 R^{4d-3\beta}$, with $\Gamma_t  = \int_{-t}^t \gamma_0(s)ds$.
Hence applying Proposition \ref{propA1} yields the desired conclusion. \hfill  $\square$

\subsection{Proof of Theorem \ref{M_D_regular}} \label{sec32_MD}

Recall the Wiener chaos expansion  \eqref{cos1} and \eqref{cos2} for $u(t,x)$. Then, for any positive integer $m$, the $m$-th Malliavin derivative valued at $(s_1, y_1, \dots , s_m, y_m)$ is given by 
  \[
  D^m_{\bs_m, \by_m } u(t,x)  = D_{s_1, y_1} D_{s_2, y_2} \cdots D_{s_m, y_m} u(t,x)  = \sum_{n= m}^{\infty} \frac{n!}{(n-m)!} I_{n-m}\big(  f_{t,x,n}(\bs_m, \by_m,\bullet ) \big),
  \]
 whenever the series converges in $L^2$. By definition, it is easy to check that $f_{t,x,n}(\bs_m, \by_m,\bullet )\in \kH^{\odot (n-m)}$ and by symmetry again, we can assume $t> s_m > s_{m-1}> \dots > s_1 > 0$. 
 
 \medskip
 
For any $n\geq m$, we make use of the notation, 
\[
[n]_{<}=\{\mathbf{i}_m=(i_1,\dots, i_m), 1\leq i_1< \dots <  i_m \leq n\}.
\]
We also define the function $f^{(\pmb{i}_m)}_{t,x,n}(\bs_m, \by_m; \bullet):\R_+^{n-m}\times (\R^d)^{n-m} $  by
\begin{align}\label{decomp-fjj}
f^{(\pmb{i}_m)}_{t,x,n}(\bs_m, \by_m; \bullet) =&    f^{(i_1)}_{t,x,i_1}(s_m, y_m; \bullet) \otimes  f^{(i_2-i_1)}_{s_m,y_m,i_2-i_1}(s_{m-1}, y_{m-1}; \bullet) \notag\\
 &\otimes \cdots \otimes  f^{(i_m-i_{m-1})}_{s_{2},y_{2}, i_m- i_{m-1}}(s_1, y_1; \bullet) \otimes   f_{s_1, y_1, n-i_m},  
\end{align}
where $f_{t,x,1}^{(1)}(r,z; \bullet) = p_{t-r}(x-z)$, and for all $k\geq 2$,
\begin{align*} 
f_{t,x,k}^{(k)}(r,z;  \bs_{k-1}, \by_{k-1}) :=&\frac{1}{k!} p_{t-s_{\sigma(k-1)}}(x-y_{\sigma(k-1)})p_{s_{\sigma(k-1)}-s_{\sigma(k-2)}}(y_{\sigma(k-1)}- y_{\sigma(k-2)})\\
&\times \cdots \times p_{s_{\sigma((1)} - r}(y_{\sigma(1)} -z), \notag
\end{align*}
and $\sigma$ denotes the permutation of $\{1,\dots,k-1\}$ such that $r<s_{\sigma(1)}<\dots<s_{\sigma(k-1)}<t$. Let $h^{(\pmb{i}_m)}_{t,x,n}(\bs_m, \by_m; \bullet)$ be the symmetrization of $f^{(\pmb{i}_m)}_{t,x,n}(\bs_m, \by_m; \bullet)$. Then, for any  $p\in[2,\infty)$, we deduce from Minkowski's inequality and    \eqref{hyper_ineq} that 
  \begin{align*}
\big\|   D^m_{\bs_m, \by_m } u(t,x)  \big\|_p &\leq  \sum_{n= m}^{\infty} (p-1)^{\frac{n-m}{2}}   \Big\|  I_{n-m}\Big( \frac{n!}{(n-m)!} f_{t,x,n}( \bs_m, \by_m,\bullet  ) \Big) \Big\|_2 
  \end{align*}
and
\begin{align}
 \frac{n!}{(n-m)!}   f_{t,x,n}(\bs_m, \by_m, \bullet) = \sum_{\pmb{i}_m\in [n]_{<}} h^{(\pmb{i}_m)}_{t,x,n}(\bs_m, \by_m; \bullet).  \notag 
\end{align}
It follows that
  \begin{equation}  \label{EQ1}
   \Big\|  I_{n-m}\Big( \frac{n!}{(n-m)!} f_{t,x,n}( \bs_m, \by_m,\bullet  ) \Big) \Big\|_2^2 \leq \binom{n}{m} \sum_{\pmb{i}_m\in [n]_{<}} \big\| I_{n-m}\big( f^{(\pmb{i}_m)}_{t,x,n}(\bs_m, \by_m; \bullet)    \big) \big\|_2^2.
   \end{equation}
Additionally, due to Lemma \ref{lem_redu}, we  deduce that 
  \begin{equation}  \label{EQ2}
\| I_{n-m}( f^{(\pmb{i}_m)}_{t,x,n}(\bs_m, \by_m; \bullet)    ) \|_2^2\leq \Gamma_t^{n-m} \| I_{n-m}^\X( f^{(\pmb{i}_m)}_{t,x,n}(\bs_m, \by_m; \bullet)    ) \|_2^2.
   \end{equation}
   The inequalities  \eqref{EQ1} and \eqref{EQ2} together with the product formula  given in Lemma \eqref{lem_prod} and the decomposition  \eqref{decomp-fjj},  imply
   \begin{align} \label{Q_H12}
\big\|   D^m_{\bs_m, \by_m } u(t,x)  \big\|_p &\leq  \sum_{n= m}^{\infty} \big[ (p-1) \Gamma_t\big]^{\frac{n-m}{2}}  \sqrt{\mathcal{Q}^\X_{n,m}} ,   
   \end{align}
   where
   \begin{align*}
\mathcal{Q}^\X_{n,m}  & =     \binom{n}{m}   \sum_{\pmb{i}_m\in [n]_{<}}  \big\| I_{n-m}  \big( f^{(\pmb{i}_m)}_{t,x,n}(\bs_m, \by_m; \bullet)    \big) \big\|_2^2.
\end{align*}
Using the  independence among the random variables inside the expectation, see Lemma \ref{lem_prod},  and the notation $(i_0, s_{m+1}, y_{m+1})=(0, t,x)$, we can write
\begin{align*}
\big\| I_{n-m} \big( f^{(\pmb{i}_m)}_{t,x,n}(\bs_m, \by_m; \bullet)    \big) \big\|_2^2 &=
 \big \| I^{\mathfrak{X}} \big( f_{s_1, y_1, n-i_m} \big)\big\|^2_2  \\
 &  \times  \prod_{j=1}^m\Big\| I^{\mathfrak{X}}_{i_j - i_{j-1}-1} \Big( f^{ (i_j- i_{j-1})  }_{s_{m-j+2}, y_{m-j+2}, i_j-i_{j-1}}(s_{m-j+1}, y_{m-j+1};\bullet) \Big) \Big\|^2_2.
     \end{align*}
 Thanks to the isometry property between the space $\kH^{\odot n}$ (see \eqref{def_hdn}), equipped with the modified norm $\sqrt{n!}\|\bullet\|_{\kH^{\otimes n}}$, and the $n$-th Wiener chaos $\mathbf{H}_n$, we can write
  \begin{align} \label{so_far}
  \mathcal{Q}^\X_{n,m} = & \binom{n}{m}   \sum_{\pmb{i}_m\in [n]_{<}}   (n-i_m)! \big\|  f_{s_1, y_1, n-i_m}  \big\|^2_{\kX^{\otimes (n-i_m)}   }   \notag \\
   & \times   \prod_{j=1}^m  (i_j-i_{j-1}-1)!\big\|   f^{ (i_j- i_{j-1})  }_{s_{m-j+2}, y_{m-j+2}, i_j-i_{j-1}}(s_{m-j+1}, y_{m-j+1};\bullet)  \big\|^2_{\kX^{\otimes (i_j - i_{j-1})}}. 
  \end{align}

  We first estimate  $ \big\| f_{s,y,k} \big\|_{\kX^{\otimes k}}$ and begin with
   \begin{align}
    \big\| f_{s,y,k} \big\|_{\kX^{\otimes k}}^2 & = n!\int_{\dT_k^s}d \br_k \big\|  f_{s,y,k}(\br_k, \bullet) \big\|_{\cH_1^{\otimes k}}^2 \notag \\
    & = \frac{1}{n!}\int_{\dT_k^s} d \br_k \int_{\R^{dk}}\mu^{\otimes k}(d\pmb{\xi}_k) \prod_{j=1}^{k} \big\vert \widehat{p}_{ r_{j+1} - r_j} (\xi_j +\dots +\xi_k )\big\vert^2,      \label{TAP_0}
   \end{align}
   with $r_{k+1}=s$ and $\widehat{p}_{t}(\xi)  = e^{-t |\xi|^2/2}$.     In the current \textbf{regular case}, we can deduce from the maximal principle (c.f. \cite[Lemma 4.1]{NZ19BM}) that 
  \[
  \sup_{z\in\R^d} \int_{\R^d}\mu(dy) e^{-s |y +z|^2}   =  \int_{\R^d}\mu(d y) e^{-s |y |^2}  <\infty.
  \]
 Then, preforming change of variables $w_{j}=r_{j+1}-r_j$, and using Lemma 3.3 in \cite{HHNT15}, we have
  \begin{align}\label{lem33_HHNT}
   \big\| f_{s,y,k} \big\|_{\kX^{\otimes k}}^2   \leq  &\frac{1}{k!} \int_{\mathcal{T}_k(s)}   d\bw_k   \int_{\R^{dk}} \mu(d\pmb{\xi}_k) \prod_{j=1}^k e^{-w_j  |\xi_j|^2} \leq \frac{1}{k!}\sum_{\ell =0}^k\binom{k}{\ell}\frac{s^{\ell}}{\ell !}D_N^{\ell}(2C_N)^{k-\ell}\nonumber\\
   \leq & \frac{2^k}{k!}\sum_{\ell =0}^k\frac{s^{\ell}}{\ell !}D_N^{\ell}(2C_N)^{k-\ell},
  \end{align}
  where $\mathcal{T}_k(s): = \{ \bw_k\in\R_+^k: w_1 + \dots + w_k \leq s\}$ and
  \begin{align}\label{CD_N}
  C_N : = \int_{\R^d} \frac{\mu(d\xi)}{ | \xi|^2 } \1_{\{ | \xi | \geq N \}} \quad{\rm and}\quad D_N : = \mu\big( \{ \xi\in\R^d:  | \xi | \leq N  \} \big),
  \end{align}
 are finite quantities under Dalang's condition \eqref{D_cond}. 

  Finally, in what follows, we estimate  $  \big\|  f_{s, y, k}^{(k)}(r,z;\bullet)  \big\|_{ \kX^{\otimes (k-1)}  }  $. It is trivial that for $k=1$,  
\begin{align}
 \big\|  f_{s, y, 1}^{(1)}(r,z;\bullet)  \big\|_{ \kX^{\otimes 0}  }^2 = p_{s-r}(y-z)^2. \label{quad_1}
  \end{align}
For $k=2$, we can write
\begin{align*}
 \big\|  f_{s, y, 2}^{(2)}(r,z;\bullet)  \big\|_{ \kX  }^2  &= \frac{1}{4}\int_r^s dv   \big\| p_{s-v}(y-\bullet) p_{v-r}(\bullet -z)  \big\|_{\cH_1}^2    \\
 &= \frac{1}{4}p_{s-r}(y-z)^2 \int_r^s dv  \big\| \frac{p_{s-v}(y-\bullet) p_{v-r}(\bullet -z)}{ p_{s-r}(y-z)  }  \big\|_{\cH_1}^2. 
 \end{align*}
 Using the fact (c.f. \cite[Formula (1.4)]{CKNP20-2}) that
\begin{align}\label{tilt}
\frac{p_t(a) p_s(b)}{p_{t+s}(a+b)} = p_{st/(t+s)}\Big(b - \frac{s}{s+t} (a+b) \Big),
\end{align}
we get 
\[
f(x):=\frac{p_{s-v}(y-x) p_{v-r}(x -z)}{ p_{s-r}(y-z)  } = p_{(v-r)(s-v)/(s-r)  } \Big(x-z - \frac{v-r}{s-r} (y-z) \Big).
\]
The Fourier transform  of $f$ is given by 
\[
\widehat{f}(\xi)=\exp\Big(-i  \Big (z + \frac{v-r}{s-r} (y-z) \Big)  \xi\Big) \exp\left(- \frac{(v-r)(s-v)}{2(s-r) } |\xi|^2  \right).
\]
This implies that 
\begin{align*}
\Big\| \frac{p_{s-v}(y-\bullet) p_{v-r}(\bullet -z)}{ p_{s-r}(y-z)  }  \Big\|_{\cH_1}^2 =c_{H_1} \int_{\R^d}\mu(d\xi) \exp\left(   -\frac{(v-r)(s-v)}{s-r}  |\xi|^2   \right),
\end{align*}
 and thus 
 \begin{align}
 \big\|  f_{s, y, 2}^{(2)}(r,z;\bullet)  \big\|_{ \kX  }^2  &=  c_{H_1}\frac{1}{4} p^2_{s-r}(y-z) \int_r^s dv   \int_{\R^d}\mu(d\xi) \exp\left(   -\frac{(v-r)(s-v)}{s-r}  |\xi|^2   \right) \notag  \\
 & \leq \frac{1}{4} c_{H_1}  p_{s-r}(y-z)^2 \big[ (s-r)D_N +    4C_N \big], \label{bdd_fkk22}
  \end{align}
  for any $N>0$.       Indeed, 
\begin{align*}
 & \int_r^s dv   \int_{\R^d}\mu(d\xi) \exp\left(   -\frac{(v-r)(s-v)}{s-r}  |\xi|^2   \right)  \\
 &\leq (s-r)D_N +    \int_{| \xi | \geq N}\mu(d\xi)   \int_r^s dv \exp\left(   -\frac{(v-r)(s-v)}{s-r}  |\xi|^2   \right)
 \end{align*}
and 
\begin{align*}
 &\quad  \int_r^s dv \exp\left(   -\frac{(v-r)(s-v)}{s-r}  |\xi|^2   \right) =  \int_0^{s-r} dv \exp\left(   -\frac{v(s-r-v)}{s-r}  |\xi|^2   \right) \\
 &=(s-r) \int_0^{1} dv e^{   -v(1-v) (s-r)  |\xi|^2 } = 2(s-r) \int_0^{1/2} dv e^{   -v(1-v) (s-r)  |\xi|^2 } \\
 &\leq 2(s-r) \int_0^{1/2} dv e^{   -v  \frac{ (s-r)  |\xi|^2}{2} } = \frac{4}{|\xi|^2} \left(1 - e^{ -(s-r) |\xi|^2/4 } \right) \leq  \frac{4}{|\xi|^2},
 \end{align*}
 so that 
 \begin{align*}
 &\quad \int_r^s dv   \int_{\R^d}\mu(d\xi) \exp\left(   -\frac{(v-r)(s-v)}{s-r}  |\xi|^2   \right) \\
 &= (s-r) \int_0^1 dv \int_{\R^d}\mu(d\xi) \exp\left(   -(s-r)v(1-v) |\xi|^2   \right) \leq (s-r)D_N +    4C_N, 
   \end{align*}
 for all $N>0$.

For $k\geq 3$, we obtain
\begin{align*}
  \big\|  f_{s, y, k}^{(k)}(r,z;\bullet)  \big\|_{ \kX^{\otimes (k-1)}}^2  =n! \int_{\dT_{k-1}^{r,s}} d\br_{k-1} \big\| f_{s, y, k}^{(k)}(r,z; \br_{k-1} , \bullet )  \big\|^2_{\cH_1^{\otimes k-1} } 
\end{align*}
In order to estimate $\| f_{s, y, k}^{(k)}(r,z; \br_{k-1} , \bullet )  \||_{\cH_1^{\otimes k-1} }$, we apply formula \eqref{tilt} several times and get another expression for $ f_{s, y, k}^{(k)}(r,z; \br_{k-1} , \bz_{k-1} )$ as follows,
\begin{align}\label{ine_cdt}
f_{s, y, k}^{(k)}&(r,z; \br_{k-1} , \pmb{z}_{k-1})=\frac{1}{k!}p_{s-r}(y-z)\frac{p_{s-r_{k-1}}(y-z_{k-1})p_{r_{k-1}-r_{k-2}}(z_{k-1}-z_{k-2})}{p_{s-r_{k-2}}(y-z_{k-2})}\nonumber\\
&\times\frac{p_{s-r_{k-2}}(y-z_{k-2})p_{r_{k-2}-r_{k-3}}(z_{k-2}-z_{k-3})}{p_{s-r_{k-3}}(y-z_{k-3})}\times\cdots\times \frac{p_{s-r_{k+1}}(y-z_{1})p_{r_{1}-r}(z_{1}-z)}{p_{s-r}(y-z)}\nonumber\\
=&p_{s-r}(y-z)\prod_{i=1}^{k-1}p_{\frac{(s-r_{i})(r_{i}-r_{i-1})}{s-r_{i-1}}}\Big(z_{i} - \frac{s-r_{i}}{s-r_{i-1}}z_{i-1} - \frac{r_{i}-r_{i-1}}{s-r_{i-1}}y\Big),
\end{align}
where by convention, $r_0=r$ and $z_0=z$. In the next step, we compute the Fourier transform of $f_{s, y, k}^{(k)}(r,z; \br_{k-1} , \bullet ) $. Using the representation \eqref{ine_cdt}, we integrate the following expression
$f_{s, y, k}^{(k)}(r,z; \br_{k-1} , \bz_{k-1} )e^{i\lgl \pmb{\xi}_{k-1}, \bz_{k-1}\rgl }$ subsequently in $\xi_{k-1},\xi_{k-2},\dots, \xi_1$ and get
\begin{align}\label{ine_fcdt}
\hf_{s, y, k}^{(k)}(r,z&; \br_{k-1}, \bxi_{k-1})=\frac{1}{k!}p_{s-r}(y-z)\exp\Big(-iz\frac{s-r_{1}}{s-r}\sum_{k_1=1}^{k-1}\Big(\xi_{k_1}\prod_{k_2=2}^{k_1}\frac{s-r_{k_2}}{s-r_{k_2-1}}\Big)\Big)\nonumber\\
&\times \exp\bigg[-iy\sum_{j=1}^{k-1}\frac{r_{j}-r_{j-1}}{s-r_{j-1}}\sum_{k_1=j}^{k-1}\Big(\xi_{k_1}\prod_{k_2=j+1}^{k_1}\frac{s-r_{k_2}}{s-r_{k_2-1}}\Big)\bigg]\nonumber\\
&\times \prod_{j=1}^{k-1}\exp\bigg[-\frac{(s-r_j)(r_j-r_{j-1})}{2(s-r_{j-1})}\Big[\sum_{k_1=j}^{k-1}\Big(\xi_{k_1}\prod_{k_2=j+1}^{k_1}\frac{s-r_{k_2}}{s-r_{k_2-1}}\Big)\Big]^2\bigg].
\end{align}
Notice that
\[
\sum_{k_1=j}^{k-1}\Big(\xi_{k_1}\prod_{k_2=j+1}^{k_1}\frac{s-r_{k_2}}{s-r_{k_2-1}}\Big)=\xi_j+\sum_{k_1=j+1}^{k-1}\Big(\xi_{k_1}\prod_{k_2=j+1}^{k_1}\frac{s-r_{k_2}}{s-r_{k_2-1}}\Big).
\]
 Thus, by the  the maximal principle (\cite[Lemma 4.1]{NZ19BM}) again, we have 
 \begin{align*}
 \big\| f_{s, y, k}^{(k)}(r,z;  \bullet )  \big\|^2_{\kX^{\otimes (k-1)}}=&k!\int_{\dT_{k-1}^{r,s}}d\br_{k-1}\int_{\R^{k-1}}\mu^{\otimes (k-1)}(d\bxi_{k-1})\big|\hf_{s, y, k}^{(k)}(r,z; \br_{k-1}, \bxi_{k-1})\big|^2\\
  \leq & \frac{1}{k!} p^2_{s-r}(y-z)    \mathcal{I},
    \end{align*}
  where, by the  change of variables $v_{i}=\frac{s-r_{k-i}}{s-r}$ for all $i=1,\dots, k$, 
  \begin{align*}
  \mathcal{I}=    &\int_{\dT_{k-1}^{r,s}}  d\br_{k-1}  \prod_{j=1}^{k-1}\int_{\R^{d} } \mu(d\xi_j) \exp\Big(-\frac{(s-r_j)(r_j-r_{j-1})}{(s-r_{j-1})}|\xi_{j}|^2\Big)\\
  =&(s-r)^{k-1}\int_{\dT_{k-1}^{1}}  d\bv_{k-1}  \int_{\R^{(k-1)d} } \mu^{\otimes (k-1)}(d\pmb{\xi}_{k-1}) \prod_{j=1}^{k-1}\exp\Big(-\frac{(s-r)v_j(v_{j+1}-r_{j})}{v_{j}}|\xi_{j}|^2\Big).
  \end{align*}
  We  estimate the term $\cI$ as follows.  First we make the decomposition
  \begin{align*}
     \mathcal{I} =&\sum_{J\subset\{1, \dots, k-1\}} \int_{\dT_{k-1}^1}  d\bv_{k-1}  \int_{\R^{(k-1)d} }\mu^{\otimes (k-1)}(d\pmb{\xi}_{k-1}) \\
   &  \times
  \Big( \prod_{j\in J} \mathbf{1}_{\{ | \xi_j|   \leq  N\} } \Big) \Big( \prod_{\ell \in J^c} \mathbf{1}_{\{ | \xi_\ell|   > N\} }\Big) \prod_{j=1}^{k-1}\exp\Big( -(s-r )  \frac{(v_{j+1} - v_j)v_j }{v_{j+1}}    | \xi_j    |^2 \Big)  \\
   \leq & \sum_{J\subset\{1, \dots, k-1\}}    D_N^{ |J|}  \int_{\dT_{k-1}^1}  d\bv_{k-1}  \int_{\R^{|J^c|d  } }\Big(\prod_{\ell \in J^c} \mu(d \xi_\ell)\Big)\\
  &\times \prod_{\ell \in J^c}  \mathbf{1}_{\{ | \xi_\ell|   > N\}  } \exp  \Big(-(s-r )  \frac{(v_{\ell+1} - v_\ell)v_\ell }{v_{\ell+1}}    | \xi_\ell    |^2 \Big),
  \end{align*}
  where $D_N$ and $C_N$ appearing below  are introduced as in \eqref{CD_N} and $J^c=\{1,\dots,k-1\}\setminus J$.

Suppose  $J^c = \{ \ell_1, \dots , \ell_j\}$ for some $j=0,\dots,k-1$ with $\ell_1 < \dots < \ell_j$. Then, performing the integral with respect to $v_{\ell_1}$, yields
  \begin{align*}
 & \int_{0} ^{v_{\ell_1+1}}dv_{\ell_1}  \exp\Big(  -(s-r )  \frac{(v_{\ell_1+1} - v_{\ell_1})v_{\ell_1} }{v_{\ell_1+1}}    | \xi_{\ell_1}    |^2 \Big)   \notag  \\
  = &v_{\ell_1+1}  \int_0^1 du  \exp\Big(  -(s-r )  [u(1-u) ]   v_{\ell_1+1}   | \xi_{\ell_1}    |^2 \Big)   \\
  = &2v_{\ell_1+1}  \int_0^{1/2}  du  \exp\Big(  -(s-r )  [u(1-u) ]  v_{\ell_1+1}    | \xi_{\ell_1}   |^2 \Big)  \notag  \\
  \leq &2v_{\ell_1+1}  \int_0^{1/2} du  \exp\Big(  -\frac{1}{2}u (s-r )    v_{\ell_1+1}   | \xi_{\ell_1}    |^2 \Big)   =   \frac {4 } { (s-r) | \xi_{\ell_1}    |^2}.  \notag  
  \end{align*}
Applying   this procedure  for the integrals with respect to the variables $v_{\ell_2}$,\dots,$v_{\ell_j}$ successively,    we  obtain
  \[
   \int_{\dT_{k-1}^1}  d\bv_{k-1}  \int_{\R^{jd } }
  \prod_{\ell \in J^c} \mu(d \xi_\ell) \mathbf{1}_{\{ | \xi_\ell|   > N\}  }\exp\Big(  -(s-r )  \frac{(v_{\ell+1} - v_\ell)v_\ell }{v_{\ell+1}}    | \xi_\ell    |^2 \Big) \leq   \Big( \frac{4C_N}{s-r} \Big)^{j}  \frac{1}{  |k-j-1| !}.
  \]
It follows that
  \[
   \mathcal{I} \leq   \sum_{j=0} ^{k-1} \binom{k-1}{j}   \frac{[4(s-r)^{-1}  C_N]^{k-j-1}}{j !}    D_N^{\ell}\leq  2^{k-1}\sum_{j=0} ^{k-1}    \frac{[4(s-r)^{-1}  C_N]^{k-j-1}}{j !}  ,   
  \]
  which implies
  \begin{align} \label{bdd_fkk1}
   \big\| f_{s, y, k}^{(k)}(r,z; \br_{k-1} , \bullet )  \big\|^2_{\kX^{\otimes (k-1)}} 
     &
   \leq \frac{2^{k-1}}{k!} p_{s-r}(y-z)^2  \sum_{\ell=0} ^{k-1}  \frac{[4  C_N]^{k-\ell-1}}{\ell !}    [(s-r)D_N]^{\ell}.
   \end{align}
Combining \eqref{so_far}, \eqref{lem33_HHNT}, \eqref{quad_1}, \eqref{bdd_fkk22} and \eqref{bdd_fkk1},
we can write
  \begin{align}\label{Q_H121}
  \mathcal{Q}^\X_{n,m} \leq & c_1 c_2^n   \sum_{\pmb{i}_m\in [n]_{<}}  \sum_{\ell =0}^{n-i_m}\frac{s_1^{\ell}}{\ell !}D_N^{\ell}C_N^{n-i_m-\ell}  \nonumber\\
   & \times      \prod_{j=1}^m\sum_{\ell=0} ^{i_j-i_{j-1}-1}   \frac{ C_N^{i_j-i_{j-1} - \ell-1}}{\ell !}   [(s_{m-j+2}-s_{m-j+1})D_N]^{\ell}f_{s,y,m}(\bs_m,\by_m)^2. 
  \end{align}
If $C_N=0$ for some $N>0$, then, we have
  \begin{align*}
  \mathcal{Q}^\X_{n,m} \leq & c_1 c_2^n   \sum_{\pmb{i}_m\in [n]_{<}}  \frac{s_1^{n-i_m}}{(n-i_m) !}D_N^{n-i_m}  \\
   & \times      \prod_{j=1}^m   \frac{ 1}{(i_j-i_{j-1}-1) !}   [(s_{m-j+2}-s_{m-j+1})D_N]^{i_j-i_{j-1}-1}f_{s,y,m}(\bs_m,\by_m)^2. \notag 
  \end{align*}
Thus
\begin{align*}
\sum_{n= m}^{\infty} \big[ &(p-1) \Gamma_t\big]^{\frac{n-m}{2}}  \sqrt{\mathcal{Q}^\X_{n,m}}\leq \sum_{ 1\leq  i_1 <  \dots < i_m \leq n<\infty}c_1 c_2^n \big[ (p-1) \Gamma_t\big]^{\frac{n-m}{2}}    \frac{s_1^{\frac{1}{2}(n-i_m)}}{[(n-i_m) !]^{\frac{1}{2}}}D_N^{\frac{1}{2}(n-i_m)}  \\
   & \times      \prod_{j=1}^m   \frac{ 1}{[(i_j-i_{j-1}-1) !]^{\frac{1}{2}}}   [(s_{m-j+2}-s_{m-j+1})D_N]^{\frac{1}{2}(i_j-i_{j-1}-1)}f_{s,y,m}(\bs_m,\by_m)\\
   =&\sum_{i_1=1}^{\infty}\sum_{i_2=i_1+1}^{\infty}\cdots\sum_{ n=i_m+1}^{\infty} c_1 c_2^n \big[ (p-1) \Gamma_t\big]^{\frac{n-m}{2}}   \frac{s_1^{\frac{1}{2}(n-i_m)}}{[(n-i_m) !]^{\frac{1}{2}}}D_N^{\frac{1}{2}(n-i_m)}  \\
   & \times      \prod_{j=1}^m   \frac{ 1}{[(i_j-i_{j-1}-1) !]^{\frac{1}{2}}}   [(s_{m-j+2}-s_{m-j+1})D_N]^{\frac{1}{2}(i_j-i_{j-1}-1)}f_{s,y,m}(\bs_m,\by_m).
\end{align*}
Notice that, by using Lemma \ref{lmm_gmm} (ii) and Corollary \ref{coro_gmm}, we have
\begin{align*}
&\sum_{ n=i_m+1}^{\infty} c_1 c_2^n \big[ (p-1) \Gamma_t\big]^{\frac{n-m}{2}}   \frac{s_1^{\frac{1}{2}(n-i_m)}}{[(n-i_m) !]^{\frac{1}{2}}}D_N^{\frac{1}{2}(n-i_m)}\\
& \qquad  =c_1c_2^{i_m}\big[ (p-1) \Gamma_t\big]^{\frac{i_m-m}{2}}\sum_{ k=1}^{\infty}  c_2^{k}\big[ (p-1) \Gamma_t\big]^{\frac{k}{2}}   \frac{s_1^{\frac{1}{2}(k)}}{[\Gamma(k+1)]^{\frac{1}{2}}}D_N^{\frac{1}{2}k}\\
& \qquad   \leq c_1c_2^{i_m}\big[ (p-1) \Gamma_t\big]^{\frac{i_m-m}{2}}e^{c_3\Gamma_t t}.
\end{align*}
Thus, by iteration, we conclude that
\begin{align}\label{ine_sqt}
\sum_{n= m}^{\infty} \big[ (p-1) \Gamma_t\big]^{\frac{n-m}{2}}  \sqrt{\mathcal{Q}^\X_{n,m}}\leq c_1 e^{c_2\Gamma_t t}f_{s,y,m}(\bs_m,\by_m).
\end{align}
Inequality \eqref{M_D_reg_p} is a consequence of inequalities \eqref{Q_H12} and \eqref{ine_sqt}.

On the other hand, suppose that $C_N>0$ for all $N>0$. From inequality \eqref{Q_H121}, we deduce that
\begin{align*}
 \mathcal{Q}^\X_{n,m} \leq & c_1 c_2^n C_N^{n-m}  \sum_{\pmb{i}_m\in [n]_{<}}  \sum_{\ell =0}^{n-i_m}\frac{s_1^{\ell}}{\ell !}D_N^{\ell}C_N^{-\ell}  \nonumber\\
   & \times      \prod_{j=1}^m\sum_{\ell=0} ^{i_j-i_{j-1}-1}   \frac{ C_N^{- \ell}}{\ell !}   [(s_{m-j+2}-s_{m-j+1})D_N]^{\ell}f_{s,y,m}(\bs_m,\by_m)^2\\
   \leq &c_1 c_2^n \sum_{\pmb{i}_m\in [n]_{<}} C_N^{n-m} e^{\frac{D_N}{C_N}t}f_{s,y,m}(\bs_m,\by_m)^2\leq c_1c_2^nC_N^{n-m}e^{\frac{D_N}{C_N}t}f_{s,y,m}(\bs_m,\by_m)^2.
   \end{align*}
Notice that by definition $\lim_{N\uparrow \infty}C_N=0$.  Therefore, we can find $N$ large enough such that $c_2((p-1)\Gamma_t)^{\frac{1}{2}}C_N<1$, and thus,
\begin{align*}
\sum_{n= m}^{\infty} \big[ (p-1) \Gamma_t\big]^{\frac{n-m}{2}}  \sqrt{\mathcal{Q}^\X_{n,m}}\leq &c_1c_2^me^{\frac{D_N}{C_N}t}f_{s,y,m}(\bs_m,\by_m)\sum_{n= 0}^{\infty} \big[ (p-1) \Gamma_t\big]^{\frac{n}{2}}  c_2^nC_N^{n}\\
=&\frac{c_1c_2^me^{\frac{D_N}{C_N}t}}{1-c_2((p-1)\Gamma_t)^{\frac{1}{2}}C_N}f_{s,y,m}(\bs_m,\by_m).
\end{align*}
 This completes the proof of Theorem \ref{M_D_regular}.\hfill$\square$

\section{Rough case under Hypothesis \ref{h2}}  \label{sec_rough}
In this section, we will deal with the \textbf{rough case}. That is, we consider the parabolic Anderson model \eqref{pam} under Hypothesis \ref{h2} and, as already mentioned in the introduction,  extra effort will be poured into for the spatial roughness. Taking advantage of the Gagliardo representation  \eqref{def_gag} of the inner product on $\mathcal{H}_1$, we apply a modified version of the second-order Gaussian Poincar\'e inequality (see Proposition \ref{propA2}). In order to estimate the quantity $\cA$ in Proposition \ref{propA2}, we need the next proposition about the upper bounds of the Malliavin derivatives and their increments.

\begin{proposition}\label{corodu}
Assume Hypothesis \ref{h2} and  let $u$ be the solution to \eqref{pam}. Given $t\in(0,\infty)$,    for almost  any $0<r<s<t$, $x,y,y',z,z'\in \R$ and for every $p\geq 2$, the following inequalities hold:
\begin{align}\label{cdu}
\|D_{s,y+y'}u(t,x)-D_{s,y}u(t,x)\|_{p}\leq C_1(t) \big(\Phi_{t-s,y'}p_{4(t-s)} \big)(x-y)
\end{align}
and
\begin{align}\label{cddu}
\|D^2_{r,z+z',s,y+y'}u(t,x)&-D^2_{r,z+z',s,y}u(t,x)-D^2_{r,z,s,y+y'}u(t,x)+D^2_{r,z,s,y}u(t,x)\|_{p}\nonumber\\
\leq & C_1(t)\Lambda_{r,z',s,y'} (p_{4(t-s)},p_{4(s-r)})(x-y,y-z),
\end{align}
where we fix $\Phi=\Phi^{H_0-\frac{1}{4}}$, defined as in Lemma \ref{tl1}, $\Lambda$ is defined as in \eqref{def_lmd}, and $C_1(t)=c_1\exp(c_2 t^{\frac{2H_0+H_1-1}{2H_1}})$ for all $t>0$ with some constants $c_1$ and $c_2$ depending on $H_0$ and $H_1$.
\end{proposition}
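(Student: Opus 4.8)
The plan is to expand the solution into Wiener chaos and estimate the relevant kernels chaos by chaos, essentially refining the argument behind Theorem \ref{M_D_regular} so as to keep track of the \emph{increments} of the heat kernels through the operators $\Delta_t$ and $R_t$ introduced in \eqref{delta}--\eqref{r}. Since the Malliavin derivative is symmetric in its space--time arguments, we may assume $0<r<s<t$. Recalling \eqref{cos1}--\eqref{cos2}, we have $D_{s,y}u(t,x)=\sum_{n\ge 1}nI_{n-1}\big(f_{t,x,n}(s,y,\bullet)\big)$ and, analogously, $D^2_{r,z,s,y}u(t,x)=\sum_{n\ge 2}\tfrac{n!}{(n-2)!}I_{n-2}\big(f_{t,x,n}(s,y,r,z,\bullet)\big)$, both convergent in $L^2(\Omega)$; by hypercontractivity \eqref{hyper_ineq} it suffices to bound the $L^2$-norm of each chaos and sum with the weights $(p-1)^{(n-1)/2}$, respectively $(p-1)^{(n-2)/2}$. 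Using the decomposition \eqref{decomp-fjj} with $m=1$ (resp.\ $m=2$), each kernel $f_{t,x,n}(s,y,\bullet)$ (resp.\ $f_{t,x,n}(s,y,r,z,\bullet)$) is a finite sum, over $\pmb{i}_1\in[n]_{<}$ (resp.\ $\pmb{i}_2\in[n]_{<}$), of tensor products of ``propagator'' pieces linking $(t,x)$ down to $(s,y)$ (resp.\ $(t,x)\to(s,y)\to(r,z)$) and a ``solution'' piece equal to the chaos kernel of $u$ at the lowest base point. The crucial structural fact is that each spatial variable enters only finitely many heat-kernel factors: $y$ sits in the bottom factor of the $(t,x)\to(s,y)$ leg and in the top factor of the subsequent piece, and similarly for $z$. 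For the increments in \eqref{cdu}--\eqref{cddu} one then applies a product-rule telescoping so that in every resulting term the increment acts on a single heat-kernel factor inside one leg --- producing a factor $\Delta_{\tau}$ as in \eqref{delta} --- or, for the mixed second difference in \eqref{cddu} when both the $y$- and the $z$-increment land on the same middle leg, producing a factor $R_{s-r}$ as in \eqref{r}.

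Next one invokes Lemma \ref{tl1} with $\beta=H_0-\tfrac14$ (so that $\Phi^\beta$ coincides with the fixed $\Phi$ of the proposition): \eqref{delta0} replaces a factor $\Delta_{\tau}(\,\cdot\,,\pm y')$ by $c_\beta(\Phi_{\tau,y'}p_{4\tau})(\cdot)$, and \eqref{r0} replaces $R_{s-r}(\,\cdot\,,y',-z')$ by $c_\beta(\Phi_{s-r,y'}\Phi_{s-r,-z'}p_{4(s-r)})(\cdot)$. For the terms in which the $z$-increment falls on the top factor of the lowest solution kernel $f_{r,z,k}$, once the $\cH_1^{\otimes k}$-norm is taken and the innermost spatial variable is integrated out, the identity $\int_{\R}(\Phi_{r,z'}p_{4r})(w)\,dw=(\Phi_{r,z'}\1_{\R})(w)$ together with $(\Phi_{r,z'}\1_{\R})(0)\le 2N_r(z')$ (Remarks \ref{rem_0_1} and \ref{rem_Lambda}) generates precisely the factor $N_r(z')$. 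Collecting the (at most two, resp.\ four) telescoped terms and recording which building-block combination each one yields reproduces exactly the right-hand sides $\big(\Phi_{t-s,y'}p_{4(t-s)}\big)(x-y)$ of \eqref{cdu} and $\Lambda_{r,z',s,y'}(p_{4(t-s)},p_{4(s-r)})(x-y,y-z)$ of \eqref{cddu}, each multiplied by a chaos-dependent constant; the four summands in the definition \eqref{def_lmd} of $\Lambda$ correspond to the four ways the $y$-increment (on leg $1$ or not) combines with the $z$-increment (on the middle leg, or used up on $f_{r,z,k}$ and hence producing $N_r(z')$).

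It then remains to control, for each $n$, the $\kH^{\otimes(n-1)}$ (resp.\ $\kH^{\otimes(n-2)}$) norm of the telescoped kernels and to sum the series. When $H_0=\tfrac12$ one has $\kH=\X=L^2(\R_+;\cH_1)$; when $H_0\in(\tfrac12,1)$ one first passes to $L^{1/H_0}(\R_+^{\bullet};\cH_1^{\otimes\bullet})$ via the iterated embedding \eqref{emdn}. In both cases one is reduced to $\cH_1^{\otimes\bullet}$-norm estimates of products of heat kernels that are shifted and/or multiplied by scalar powers of $|y'|,|z'|$ coming from $\Phi$; these are handled exactly as in Section \ref{sec32_MD}, through the spectral representation \eqref{c_a_22}, the tilting identity \eqref{tilt}, and the time-simplex (Beta-function) integral bounds, noting that a $\Phi$-shift contributes only a unimodular phase in the Fourier variable and therefore does not affect the magnitude bound (this is why the shift $\theta_{y'}$ survives untouched inside $\Lambda$). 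Summing over $n$, the $n$-th summand carries a reciprocal Gamma factor, and Lemma \ref{lmm_gmm}(ii) together with Corollary \ref{coro_gmm} yields convergence to $c_1\exp\!\big(c_2 t^{(2H_0+H_1-1)/(2H_1)}\big)$; here the hypothesis $H_0+H_1>\tfrac34$ is exactly what guarantees finiteness of the $|z'|^{2H_1-2}$- and $|y'|^{2H_1-2}$-weighted integrals of the $\Phi$- and $N$-factors (equivalently $2(H_0-\tfrac14)+2H_1-2>-1$), the prototype being the computation \eqref{ineq_nn}.

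The main obstacle is the bookkeeping of the first two paragraphs: distributing the one (resp.\ four) increment combinations across the multi-leg chaos kernels and verifying that, after applying Lemma \ref{tl1} and integrating out the solution-kernel variable, the bounds match the precise expressions $\Phi_{t-s,y'}p_{4(t-s)}$ and $\Lambda_{r,z',s,y'}(\cdots)$ rather than something merely comparable. A secondary difficulty, which fixes the exponent $(2H_0+H_1-1)/(2H_1)$, is the uniform-in-$n$ estimate of the $\cH_1$-norms of the shifted/weighted heat-kernel products: one must track the interplay between the singular $\tau^{-\beta/2}$ gains hidden in $\Phi$, the $|y'|^{2H_1-2}$ and $|z'|^{2H_1-2}$ weights, and the Beta-function time integrals so that the resulting $n$-dependence collapses to a Mittag-Leffler-type series summing to the claimed exponential.
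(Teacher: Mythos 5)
Your high-level architecture (chaos expansion, hypercontractivity, four increment combinations assembling into $\Lambda$, Mittag--Leffler summation giving $C_1(t)$) matches the paper's, and your final assembly step via Lemma \ref{tl1} with $\beta=H_0-\tfrac14$ is indeed how the paper passes from $\Delta,R,N$ to $\Phi$ and $\Lambda$. But there is a genuine gap in the core of your argument: you telescope the increment \emph{inside} the chaos kernels, bound the affected heat-kernel factor pointwise by $\Phi_{\tau,y'}p_{4\tau}$ via \eqref{delta0}--\eqref{r0}, and then take $\cH_1^{\otimes n}$-norms of the dominating expression. Under Hypothesis \ref{h2} this step is not legitimate: the spatial covariance is a genuine distribution, and $\|\cdot\|_{\cH_1}$ is the Gagliardo/Fourier-weighted norm \eqref{c_a_22}, which is \emph{not} monotone under pointwise domination of nonnegative functions (unlike the regular case, where $\gamma_1\ge 0$ makes the quadratic form monotone). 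So a pointwise bound on the telescoped kernel gives no bound on the quantity you need. In the paper, Lemma \ref{tl1} is applied only \emph{after} Lemmas \ref{produ} and \ref{prod2u} have produced scalar prefactors $\Delta_{t-s}(y-x,y')$, $R_{s-r}(\cdot)$, $N_\cdot(\cdot)$ at the fixed macroscopic points, and those prefactors are obtained by genuine $\cH_1^{\otimes n}$-norm estimates of the kernel increments carried out on the Fourier side (Lemmas \ref{lmgn1} and \ref{lmgn2}): explicit Fourier transforms \eqref{ine_fcdt}, \eqref{hgk1}, a triangular change of variables with unit Jacobian, the subadditivity $|a+b|^{1-2H_1}\le|a|^{1-2H_1}+|b|^{1-2H_1}$ generating the multi-index sums, the splitting $|e^{-ix}-1|\le|x|\1_{\{|x|\le1\}}+2\cdot\1_{\{|x|>1\}}$ for the increments, and time-simplex integrals. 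That is the main technical content, and your proposal bypasses it.

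Two further points reinforce the gap. First, your claim that the shifted/weighted heat-kernel norm estimates are ``handled exactly as in Section \ref{sec32_MD}'' cannot work: Section \ref{sec32_MD} rests on the maximal principle, which requires $\gamma_1\ge0$ and fails under Hypothesis \ref{h2} (the paper states this explicitly in the remark preceding the proof of Lemma \ref{lmgn2}); it is precisely to replace it that the different Fourier-side computations are needed, and they are also what fixes the exponent in $C_1(t)$. Second, your mechanism for producing the factor $N_r(z')$ --- integrating $\Phi_{r,z'}p_{4r}$ in $L^1$ ``once the $\cH_1^{\otimes k}$-norm is taken'' --- conflates the $L^1$ spatial integrations of Remark \ref{rem_Lambda}, which the paper uses only later in Section \ref{sec41} when integrating the norm bounds over $B_R$ and $\R$, with $\cH_1$-norm computations; in the paper $N_r(z')$ arises from the increment estimate \eqref{gn12} for $g^1$ in its base spatial point, proved in Fourier variables. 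To repair your argument you would essentially have to prove Lemmas \ref{lmgn1}--\ref{lmgn2} (or equivalents), i.e., estimate the $L^{1/H_0}$-in-time $\cH_1^{\otimes n}$-norms of the kernel increments directly, rather than reducing to pointwise bounds.
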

The proof of Proposition \ref{corodu} is based on the following lemmas. We firstly show how these lemmas imply Proposition \ref{corodu}. The proofs of Lemmas \ref{produ} and \ref{prod2u},  which  heavily rely on the Wiener chaos expansion, are postponed to Section \ref{sec5}.
\begin{lemma}\label{produ}
Assume Hypothesis \ref{h2} and let $u$ be the solution to \eqref{pam}. Then, for almost every $(s, x,y,y')  \in(0,t) \times \R^3$ and for any $p\geq 2$,  the following inequalities hold:
\begin{align}\label{du}
\|D_{s,y}u(t,x)\|_{p}\leq C_1(t)p_{t-s}(x-y)
\end{align}
and
\begin{align}\label{ddu}
\|D_{s,y+y'}u(t,x)-D_{s,y}u(t,x)\|_{p}\leq C_1(t) \Big(  |\Delta_{t-s}(y-x,y')|+p_{t-s}(x-y)N_{t-s}(y')\Big),
\end{align}
where $\Delta_t$ and $N_t$ are defined as in \eqref{delta} and \eqref{n}, respectively,  and $C_1(t)$ is the same as in Proposition \ref{corodu}.
\end{lemma}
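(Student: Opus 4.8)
\textbf{Proof proposal for Lemma \ref{produ}.}

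The plan is to work from the Wiener chaos expansion \eqref{cos1}--\eqref{cos2} of $u(t,x)$ and the standard identity $D_{s,y}u(t,x)=\sum_{n\ge 1}n\,I_{n-1}\big(f_{t,x,n}(\bullet,s,y)\big)$, valid for a.e.\ $(s,y)$, where $f_{t,x,n}(\bullet,s,y)$ denotes the chaos kernel with its last space--time slot frozen at $(s,y)$. Applying Minkowski's inequality, the hypercontractivity bound \eqref{hyper_ineq}, and the estimate $\|I_{n-1}(g)\|_2\le\sqrt{(n-1)!}\,\|g\|_{\kH^{\otimes(n-1)}}$ (symmetrization is a contraction on $\kH^{\otimes(n-1)}$), one bounds $\|D_{s,y}u(t,x)\|_p$ by
\[
\sum_{n\ge 1}n\,(p-1)^{(n-1)/2}\sqrt{(n-1)!}\;\big\|f_{t,x,n}(\bullet,s,y)\big\|_{\kH^{\otimes(n-1)}},
\]
and bounds the left-hand side of \eqref{ddu} by the same sum with $f_{t,x,n}(\bullet,s,y)$ replaced by $f_{t,x,n}(\bullet,s,y+y')-f_{t,x,n}(\bullet,s,y)$. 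Thus both inequalities reduce to deterministic $\kH^{\otimes(n-1)}$-norm estimates of the chaos kernel, and of its increment in the last spatial variable, that are summable against the above weights.

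For \eqref{du}: decompose $f_{t,x,n}(\bullet,s,y)$ exactly as in the $m=1$ instance of \eqref{decomp-fjj}, i.e.\ as a sum over $i_1$ of a forward heat chain of total time-length $t-s$ joining $(s,y)$ to $(t,x)$ through $i_1-1$ intermediate nodes, tensored with the chaos kernel $f_{s,y,n-i_1}$ of $u(s,y)$. Iterating the Gaussian identity \eqref{tilt} rewrites the forward chain as $p_{t-s}(x-y)$ times a product of normalized Gaussian densities in the intermediate spatial variables whose Fourier transforms carry only phases in $x,y$; hence, computing the spatial norms through $\|\phi\|_{\cH_1}^2=c_{H_1}\int_{\R}|\xi|^{1-2H_1}|\widehat{\phi}(\xi)|^2\,d\xi$ (cf.\ \eqref{c_a_22}), the factor $p_{t-s}(x-y)$ separates and the remaining integrand no longer sees $x$ or $y$ --- just as in the computation leading to \eqref{bdd_fkk1}. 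Using that $\int_{\R}|\xi|^{1-2H_1}e^{-\tau|\xi|^2}\,d\xi$ is of order $\tau^{H_1-1}$, the spatial integrals collapse to a product over time layers; the embedding \eqref{emdn} converts the surviving $\kH$-norm in time into an $L^{1/H_0}$-norm over a time simplex, and a Dirichlet/Beta-function estimate of that simplex integral gives $\|f_{t,x,n}(\bullet,s,y)\|_{\kH^{\otimes(n-1)}}^2\le p_{t-s}(x-y)^2\,a_n$ for an explicit sequence $a_n$. Keeping track of the exponents in the simplex integral together with the $L^{1/H_0}$-power from \eqref{emdn}, Stirling's formula (Lemma \ref{lmm_gmm}(i)), Corollary \ref{coro_gmm} and the Mittag--Leffler bound (Lemma \ref{lmm_gmm}(ii)) show that $\sum_n n(p-1)^{(n-1)/2}\sqrt{(n-1)!\,a_n}\le C_1(t)$ with $C_1(t)=c_1\exp\!\big(c_2t^{(2H_0+H_1-1)/(2H_1)}\big)$, proving \eqref{du}; the extra sum over the position $i_1$ costs only a polynomial factor in $n$, absorbed into $c_2$.

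For \eqref{ddu}: run the same decomposition with the frozen spatial variable shifted to $y+y'$ and expand the kernel difference by a telescoping product rule. In the time-ordered chain the node $(s,y)$ is adjacent to at most two heat-kernel factors, so each telescoping term carries exactly one factor of the form $\Delta_\tau(\cdot,y')=p_\tau(\cdot+y')-p_\tau(\cdot)$ and otherwise ordinary heat kernels. The contribution in which $(s,y)$ is the latest time and is joined directly to $(t,x)$ produces the difference $p_{t-s}(x-y-y')-p_{t-s}(x-y)=\Delta_{t-s}(y-x,y')$, carried through by the same forward/backward factorization; this is the first term of \eqref{ddu}. In every other term the $\Delta$-factor sits on an internal kernel; bounding it by \eqref{delta0} of Lemma \ref{tl1} with exponent $\beta=H_0-\tfrac{1}{4}$, and re-running the \eqref{tilt}-factorization on the remaining chain, shows that contribution is at most $p_{t-s}(x-y)$ times a numerical series in which the spatial displacement is encoded through $N_{t-s}(y')$; summing over $n$ and over the position as before gives the second term $p_{t-s}(x-y)N_{t-s}(y')$ with the same $C_1(t)$. (The passage from \eqref{du}--\eqref{ddu} to \eqref{cdu}--\eqref{cddu} of Proposition \ref{corodu} is then immediate from Lemma \ref{tl1} and the elementary bound $p_\tau\le 2\,p_{4\tau}$.)

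The main obstacle is the increment estimate \eqref{ddu}: one must control the kernel difference simultaneously at all chaos levels --- locating which of the roughly $n$ heat-kernel factors are moved by the shift, isolating exactly one $\Delta$-factor per term, and verifying that after this $\Delta$-factor has been estimated via Lemma \ref{tl1} the remaining factors still admit the clean \eqref{tilt}-factorization with the full $p_{t-s}(x-y)$ prefactor --- so that the sum over $n$ still converges to a Mittag--Leffler-type constant and the only residual $y'$-dependence appears through $\Delta_{t-s}(y-x,y')$ and $N_{t-s}(y')$, with no loss of Gaussian decay in $x-y$. A secondary difficulty is making the Dirichlet/Beta-type time-simplex estimates compatible with the $L^{1/H_0}$ (rather than $L^2$) norm forced by \eqref{emdn}, so that the constant $C_1(t)$ comes out with the exponent $(2H_0+H_1-1)/(2H_1)$ as stated.
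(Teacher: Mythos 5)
Your treatment of \eqref{du} essentially reproduces the paper's argument: chaos expansion of $D_{s,y}u(t,x)$, Minkowski plus hypercontractivity, splitting of the kernel at time $s$ into the chaos kernel of $u(s,y)$ tensored with the forward chain from $(s,y)$ to $(t,x)$, extraction of $p_{t-s}(x-y)$ via \eqref{tilt}, Fourier computation of the $\cH_1$-norms, the embedding \eqref{emdn}, and Gamma/Mittag--Leffler summation. One caution there: the ``computation leading to \eqref{bdd_fkk1}'' relies on the maximal principle, which is unavailable under Hypothesis \ref{h2}; after the tilt change of variables the Riesz weight becomes $\prod_j|\eta_j-c_j\eta_{j+1}|^{1-2H_1}$, which couples the layers, and one needs the subadditivity $|a+b|^{1-2H_1}\le|a|^{1-2H_1}+|b|^{1-2H_1}$ and the resulting multi-index sums (as in Lemmas \ref{lmgn1} and \ref{lmgn2}) before the ``product over time layers'' you invoke is legitimate. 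This is fixable, but it is exactly the extra work the rough case demands.

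The genuine gap is in \eqref{ddu}. You telescope the kernel difference in physical space, so every term except the one where $(s,y)$ is linked directly to $(t,x)$ carries a factor $\Delta_\tau(\cdot,y')$ on an internal kernel whose time gap $\tau$ (either $s-s_{\sigma(k)}$ or $s_{\sigma(k+1)}-s$) is typically much smaller than $t-s$, and you then bound it pointwise by \eqref{delta0} with $\beta=H_0-\frac14$. But in the regime $\sqrt{\tau}<|y'|\le\sqrt{t-s}$ the bound \eqref{delta0} only gives $\theta_{y'}p_{4\tau}+p_{4\tau}$ with prefactor $1$, while the target $N_{t-s}(y')=(|y'|/\sqrt{t-s})^{H_0-\frac14}$ is strictly smaller than $1$ there; after re-running the \eqref{tilt}-factorization you therefore obtain (a possibly shifted) $p_{4(t-s)}(x-y)$ times a constant, not times $N_{t-s}(y')$. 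The missing smallness can only come from the time integration over the region $\{\tau<|y'|^2\}$ --- equivalently, from working on the Fourier side, where the phase increment is split via \eqref{eixz} and the constrained simplex integrals produce the indicator split at $\sqrt{t-s}$, cf.\ the estimate of $G_{21}^2$ and \eqref{jnmin} in the proof of Lemma \ref{lmgn1}, and \eqref{gn22} in Lemma \ref{lmgn2} --- and this step is entirely absent from your argument. Relatedly, your claim that $\Delta_{t-s}(y-x,y')$ arises only when $(s,y)$ is joined directly to $(t,x)$ presupposes that in all other configurations the full factor $p_{t-s}(x-y)$ survives with an $N_{t-s}(y')$ gain, which is precisely the unproven point; in the paper's proof the $\Delta_{t-s}$ term appears at every chaos level from differencing the extracted prefactor of the forward chain, and the $N_{t-s}(y')$ term comes from the phase increment only after the time integration. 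As written, your route does not deliver \eqref{ddu}.
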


\begin{lemma}\label{prod2u}
Assume Hypothesis \ref{h2}. Let $u$ be the solution to \eqref{pam}. Then, for almost any $0<r<s<t$, $x,y,y',z,z'\in \R$ and  for every $p\geq 2$,   the following inequality holds:
\begin{align}\label{dd2u}
&\|D^2_{r,z+z',s,y+y'}u(t,x)-D^2_{r,z+z',s,y}u(t,x)-D^2_{r,z,s,y+y'}u(t,x)+D^2_{r,z,s,y}u(t,x)\|_{p}   \notag  \\
\leq &C_1(t)\bigg\{p_{t-s}(x-y)N_r(z') \Big[  |\Delta_{s-r}(y-z,y')|+p_{s-r}(y-z)N_{s-r}(y')\Big]   \notag\\
&+p_{t-s}(x-y) \Big[  |R_{s-r}(y-z,y',z')|+|\Delta_{s-r}(y-z,y')| N_{s-r}(z')    \notag  \\
&\qquad\qquad+|\Delta_{s-r} (z-y,z')|   N_{s-r}(y')+p_{s-r}(y-z) N_{s-r}(y')  N_{s-r}(z')\Big]   \notag  \\
&+p_{s-r}(y+y'-z) N_r(z')  \Big[  |\Delta_{t-s}(y-x,y')|+p_{t-s}(x-y) N_{t-s}( y') \Big]  \notag \\
&+\Big[    |\Delta_{s-r}( z-y-y',z')|+p_{s-r}(y+y'-z) N_{s-r}(z') \Big]   \notag   \\
&\qquad\times\Big[  |\Delta_{t-s}( y-x,y')|+p_{t-s}(x-y)  N_{t-s}( y')\Big]\bigg\},
\end{align}
where $\Delta_t$, $R_t$ and $N_t$ are defined as in \eqref{delta} - \eqref{n}, respectively, and $C_1$ is the same as in  Proposition \ref{corodu}.
\end{lemma}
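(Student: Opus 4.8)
The plan is to work from the explicit Wiener chaos expansion \eqref{cos1}--\eqref{cos2} of $u(t,x)$. By symmetry of the Malliavin derivative we may assume $r<s$, and then $D^2_{r,z,s,y}u(t,x)=\sum_{n\ge 2}\tfrac{n!}{(n-2)!}I_{n-2}\big(f_{t,x,n}(r,z,s,y,\bullet)\big)$, whose $n$-th kernel, by the decomposition \eqref{decomp-fjj} with $m=2$, is a sum over $1\le i_1<i_2\le n$ of (symmetrizations of) three-block tensor products $A\otimes B\otimes C$. Here $A=f^{(i_1)}_{t,x,i_1}(s,y;\bullet)$ is the propagator from $(t,x)$ down to $(s,y)$ and depends on $y$ but not on $z$; $B=f^{(i_2-i_1)}_{s,y,i_2-i_1}(r,z;\bullet)$ is the propagator from $(s,y)$ down to $(r,z)$ and depends on both $y$ and $z$; and $C=f_{r,z,n-i_2}$ is the solution kernel rooted at $(r,z)$ and depends on $z$ but not on $y$. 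Since the three blocks live on the disjoint time intervals $[s,t]$, $[r,s]$ and $[0,r]$, Lemma \ref{lem_prod} (in particular \eqref{prod_eq}) makes the corresponding multiple integrals independent and multiplicative, so that $\|\,\cdot\,\|_2$ of each such integral factors as a product of three block-norms.

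Because $A$ only sees $y$, $C$ only sees $z$, and $B$ sees both, the fourth difference on the left-hand side of \eqref{dd2u} is exactly the image of the product $A\,B\,C$ under the discrete operator $\delta^{(y)}_{y'}\delta^{(z)}_{z'}$, with $\delta^{(a)}_{a'}g(a):=g(a+a')-g(a)$. The discrete Leibniz rule expands this into precisely four terms (writing the factors schematically, with internal arguments suppressed and each factor evaluated at the shifted or unshifted anchor as dictated by the differences),
\begin{align*}
&\big(\delta^{(y)}_{y'}A\big)\,\big(\delta^{(z)}_{z'}B\big)\,C \;+\; A\,\big(\delta^{(y)}_{y'}\delta^{(z)}_{z'}B\big)\,C\\
&\qquad+\;\big(\delta^{(y)}_{y'}A\big)\,B\,\big(\delta^{(z)}_{z'}C\big) \;+\; A\,\big(\delta^{(y)}_{y'}B\big)\,\big(\delta^{(z)}_{z'}C\big),
\end{align*}
and, after the block-norm estimates below, these four terms correspond respectively to the fourth, second, third and first of the four bracketed groups on the right-hand side of \eqref{dd2u}.

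Thus the core of the argument is an estimate of the $L^2$-norm of each block, together with each of its prescribed spatial increments. Using hypercontractivity \eqref{hyper_ineq} to reduce $\|\,\cdot\,\|_p$ to an $L^2$-series, the iterated embedding \eqref{emdn} of $L^{1/H_0}(\R_+^\bullet;\cH_1^{\otimes\bullet})$ into $\kH^{\otimes\bullet}$ to absorb the temporal correlation (covering $\gamma_0=\delta$ and $\gamma_0(t)=|t|^{2H_0-2}$ uniformly), and reducing Gaussian shifts by the maximal principle exactly as in Section \ref{sec32_MD}, one proceeds as follows. For the $A$-block: the undifferenced block contributes $p_{t-s}(x-y)$ and its $y$-increment contributes $|\Delta_{t-s}(y-x,y')|+p_{t-s}(x-y)N_{t-s}(y')$ — this is nothing but the single-derivative estimate already isolated in Lemma \ref{produ}, since $A$ summed over $i_1$ is the chaos kernel of $D_{s,y}u(t,x)$. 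For the $C$-block: the undifferenced block yields a convergent $\|f_{r,z,k}\|$-type series (bounded via \eqref{emdn} and the Gagliardo/Fourier representation of the $\cH_1$-norm, just as in \eqref{lem33_HHNT}), and its $z$-increment yields the factor $N_r(z')$. The new work is the $B$-block: one must control $\|B\|$, $\|\delta^{(y)}_{y'}B\|$, $\|\delta^{(z)}_{z'}B\|$ and $\|\delta^{(y)}_{y'}\delta^{(z)}_{z'}B\|$ in the rough norm, obtaining $p_{s-r}(y-z)$, the two single-$\Delta_{s-r}$-plus-$N_{s-r}$ expressions, and the full double-difference bracket containing $R_{s-r}$. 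For this I would write $B$ through the tilting identity \eqref{tilt}, as in \eqref{ine_cdt}--\eqref{ine_fcdt}, so that shifting an anchor of the propagator perturbs only one Gaussian factor (or, when $i_2-i_1=1$ so that $B=p_{s-r}(y-z)$, one and the same factor), and then propagate that heat-kernel increment through the remaining internal integrals; the elementary increment estimates \eqref{deltal} and \eqref{r4} from the proof of Lemma \ref{tl1}, applied with $\beta=H_0-\tfrac14\in[\tfrac14,\tfrac34)$, are what turn these perturbations into the $\Delta_{s-r}$, $R_{s-r}$ and $N_{s-r}$ terms. In particular the $R_{s-r}$-term is precisely the $i_2-i_1=1$ contribution, where $B$ is a single heat kernel receiving the full fourth difference, whereas the $\Delta\cdot N$ and $N\cdot N$ terms come from $i_2-i_1\ge 2$, where the two anchor-increments sit on distinct factors of $B$ and one of them is absorbed into an $N_{s-r}$ after integration.

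Finally one sums the resulting block-norm products over $n$, over $1\le i_1<i_2\le n$, and over the internal chaos indices. As in the proof of Theorem \ref{M_D_regular}, but with the Mittag-Leffler and Stirling bounds of Lemma \ref{lmm_gmm} and Corollary \ref{coro_gmm} replacing the elementary factorial estimates used there (in the rough case each internal time-integration produces a reciprocal $\Gamma(\,\cdot\,H_1+1)$ factor rather than a reciprocal factorial, and the $L^{1/H_0}$-embedding is what brings $H_0$ into the exponent), these triple series converge and assemble into the common prefactor $C_1(t)=c_1\exp\big(c_2 t^{(2H_0+H_1-1)/(2H_1)}\big)$, where the standing condition $H_0+H_1>\tfrac34$ enters. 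I expect the main obstacle to be exactly this $B$-block double-difference estimate: one must simultaneously difference the two anchors of a propagator whose internal structure is itself rough in space, keep every increment in the $\Phi/N$ form so that the subsequent integrations against the $|w-w'|^{2H_1-2}$ kernels stay summable, and do so with constants uniform in the number of internal variables so that the $n$-series still converges; the combinatorial bookkeeping for the symmetrization that mixes the three blocks is the secondary difficulty.
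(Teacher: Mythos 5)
Your skeleton is the paper's: expand $D^2u(t,x)$ in Wiener chaos, split each kernel into the three temporally disjoint blocks (top propagator in $y$, middle propagator in $(y,z)$, bottom solution kernel in $z$), expand the rectangular increment by the discrete Leibniz rule into four products matching the paper's terms $K_1,\dots,K_4$, estimate each block norm (plain, singly differenced, doubly differenced) via the tilting identity \eqref{tilt} and the Fourier form of the $\cH_1$-norm, absorb $\gamma_0$ by the embedding \eqref{emdn}, and sum with hypercontractivity plus the Stirling/Mittag--Leffler bounds. Two points in your execution, however, would not go through as written.

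First, you propose ``reducing Gaussian shifts by the maximal principle exactly as in Section \ref{sec32_MD}.'' Under Hypothesis \ref{h2} the spectral density $|\xi|^{1-2H_1}$ is increasing at infinity, so $\sup_z\int_\R \mu(d\xi)\,e^{-s|\xi+z|^2}=+\infty$ and the maximal principle of the regular case is simply unavailable; the paper flags this explicitly in the remark before the proof of Lemma \ref{lmgn2}. The correct substitute (which the rest of your plan is in fact compatible with) is the change of variables leading to \eqref{hgk1} followed by the sub-additivity $|a+b|^{1-2H_1}\le|a|^{1-2H_1}+|b|^{1-2H_1}$ and the multi-index decompositions of Lemmas \ref{lmgn1}--\ref{lmgn2}; in particular the bound for the undifferenced bottom block cannot be obtained ``just as in \eqref{lem33_HHNT}'', which is a maximal-principle computation.

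Second, your bookkeeping for the middle block is off: you attribute the $R_{s-r}$ term exclusively to $i_2-i_1=1$ and claim that for $i_2-i_1\ge 2$ ``the two anchor-increments sit on distinct factors of $B$.'' In the tilted representation \eqref{ine_cdt}/\eqref{hgk1} the block of any length carries the prefactor $p_{s-r}(y-z)$, in which \emph{both} anchors appear (and both also enter the oscillatory exponentials), so the rectangular increment of this prefactor produces an $R_{s-r}$-type contribution at every chaos order, not only at length one — this is exactly how \eqref{gn24} is organized in the paper, with the split $h=h_1h_2h_3$ and the four cross terms $\hbar_1,\dots,\hbar_4$. If you literally bounded the $k\ge 2$ double difference by only $\Delta\cdot N$ and $N\cdot N$ terms, that estimate would be false; once you restore the $R$ contribution for all $k$ (which your final target accommodates anyway), and replace the maximal-principle step as above, your argument coincides with the paper's proof.
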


\begin{proof}[Proof of Proposition \ref{corodu}] Let us first recall  from  \eqref{n} that
\[
N_t(x) = t^{\frac{1}{8} - \frac{1}{2}H_0}|x|^{H_0-\frac{1}{4}}\1_{\{|x|\leq \sqrt{t}\}}+\1_{\{|x|>\sqrt{t}\}}.
\]
Then   applying    Lemma \ref{tl1} with  $\beta=H_0-\frac{1}{4}$ yields immediately    that
\begin{align}
\Phi_{t,x'}g(x) \equiv \Phi_{t,x'}^{H_0 - \frac14}g(x)    =\theta_{x'}g(x)\1_{\{  |x'|>\sqrt{t} \} }+N_t(x')g(x) \geq N_{t}(x')g(x) \label{use0_1}
\end{align}
for any nonnegative  function $g\in\cM(\R)$. Now combining inequalities \eqref{ddu} and \eqref{delta0}, we get \eqref{cdu} immediately. 

In the next step, we prove inequality \eqref{cddu} that contains  more   terms. This is because the bound in \eqref{dd2u} is a sum of 4 terms, which we denote by $R_1, R_2, R_3$ and  $R_4$.

\medskip
Firstly, we estimate $R_1$ and $R_2$ by using inequality \eqref{use0_1} and Lemma \ref{tl1} as follows,
\begin{align*}
R_1:=&p_{t-s}(x-y)N_r(z') \big(|\Delta_{s-r}(y-z,y')|+p_{s-r}(y-z)N_{s-r}(y')\big)   \\
\leq &c \,  p_{4(t-s)}(x-y)N_{r}(z')   \big(   \Phi_{s-r,y'} p_{4s-4r}(y-z) +p_{4s-4r}(y-z)N_{s-r}(y')\big)   \\
\leq & c \,  p_{4(t-s)}(x-y)N_{r}(z')  \big(  \Phi_{s-r,y'} p_{4s-4r}\big)(y-z),
\end{align*}
and  taking  into account Remark \ref{rem_0_1} and the inequality $p_t(x) \leq 2 p_{4t}(x)$, we can write
\begin{align*}
R_2:&= p_{t-s}(x-y) \big[  |R_{s-r}(y-z,y',z')|+|\Delta_{s-r}(y-z,y')| N_{s-r}(z')       \\
&\qquad\qquad+|\Delta_{s-r} (z-y,z')|   N_{s-r}(y')+p_{s-r}(y-z) N_{s-r}(y')  N_{s-r}(z')\big] \\
\leq &c\, p_{t-s}(x-y) \big[ (\Phi_{s-r,y'}\Phi_{s-r,-z'}p_{4(s-r)})(y-z)+(\Phi_{s-r,y'}p_{4(s-r)})(y-z) N_{s-r}(z') \\
&\qquad\qquad \qquad +(\Phi_{s-r,z'}p_{4(s-r)})(z-y)N_{s-r}(y')+p_{4s-4r}(y-z)N_{s-r}(y')N_{s-r}(z')  \big]  \\
\leq &c\, p_{t-s}(x-y) \big[  (\Phi_{s-r,y'}\Phi_{s-r,-z'}p_{4(s-r)})(y-z)+(\Phi_{s-r,-z'}\Phi_{s-r,y'}p_{4(s-r)})(y-z)  \\
&\qquad\qquad\qquad +(\Phi_{s-r,-y'}\Phi_{s-r,z'}p_{4(s-r)})(z-y)+ (\Phi_{s-r,y'}\Phi_{s-r,-z'}p_{4s-4r})(y-z)\big]  \\
\leq &c\, p_{4(t-s)}(x-y) \big(\Phi_{s-r,y'}\Phi_{s-r,-z'}p_{4(s-r)}\big)(y-z).
\end{align*}
 Following a similar argument, we also get
\begin{align*}
R_3:=&p_{s-r}(y+y'-z) N_r(z')\big[  |\Delta_{t-s}(y-x,y')| + p_{t-s}(x-y) N_{t-s}( y')\big]  \\
\leq &c\, p_{4(s-r)}(y+y'-z) N_r(z') \big( \Phi_{t-s,-y'}p_{4(t-s)} \big)(x-y)   \\
=&c\, (\theta_{y'}p_{4(s-r)})(y-z) N_r(z') \big( \Phi_{t-s,-y'}p_{4(t-s)} \big)(x-y)
\end{align*}
and
\begin{align*}   
R_4:=&\big[ |\Delta_{s-r}( z-y-y',z')|+p_{s-r}(y+y'-z) N_{s-r}(z')\big]  \\
 &\qquad \times \big[ |\Delta_{t-s}( y-x,y')|+p_{t-s}(x-y)N_{t-s}( y')\big]  \\
\leq &c\, (\Phi_{s-r, -z'}p_{4s-4r})(z-y-y')(\Phi_{t-s,-y'}p_{4(t-s)})(x-y) \\
=&c\,  (\theta_{y'}\Phi_{s-r,-z'}p_{4s-4r})(y-z) \big( \Phi_{t-s,-y'}p_{4(t-s)} \big)(x-y).
\end{align*}
Using  the above estimates, inequality  \eqref{cddu}  follows immediately.
\end{proof}

In what follows, we first give  the remaining proof of  \eqref{main_TV} in Section \ref{sec41}.  Later, in Section \ref{sec5} provides proofs of several auxiliary results.

\subsection{Proof of  \texorpdfstring{\eqref{main_TV}}{} in the rough case}\label{sec41}

According to Proposition \ref{propA2}, we need to estimate the quantity $\cA$ defined as in \eqref{def_ca2} with $F=F_R$ given as in \eqref{def_FRT}.  Our goal is to show $\mathcal{A} \lesssim R$ for large $R$. Indeed, we already know from Theorem \ref{thm_quality} that $\sigma^2_R(t) \sim R$, then the desired bound \eqref{main_TV} (in the \textbf{rough case}) follows.

\medskip

In what follows,  we only provide a detailed proof assuming $\gamma_0(s) = |s|^{2H_0-2}$ for $H_0\in(1/2,1)$, while the other case   ($\gamma_0 = \delta_0$) can be dealt with in the same way.  We first write
\begin{align*}
\cA \leq \int_{[0,t]^6} dsds'  drdr' d\theta  d\theta'    |s-s'|^{2H_0-2}  |r-r'|^{2H_0-2}  |\theta - \theta'|^{2H_0-2}  \cA_0,
\end{align*}
where, using a changing of variables in space,
\begin{align*}
 \cA_0& := \int_{\R^6}    dy dy'  dz dz'  dw dw'     \int_{[-R,R]^4}dx_1dx_2dx_3dx_4   |  y'|^{2H_1-2} |  z'|^{2H_1-2}  |  w'|^{2H_1-2}  \\
& \times \big\| D_{r', z+z'}u(t,x_1) - D_{r', z}u(t,x_1)\big\|_4 \big\| D_{\theta', w+w'} u(t,x_2) - D_{\theta', w} u(t,x_2)\big\|_4  \\
& \times \big\| D_{s,y+y'}D_{r,z+z'} u(t,x_3) - D_{s,y+y'}D_{r,z} u(t,x_3)    -D_{s,y}D_{r,z+z'} u(t,x_3) + D_{s,y}D_{r,z} u(t,x_3)     \big\|_4 \\
&   \times  \big\|  D_{s',y+y'}D_{\theta,w+w'} u(t,x_4) - D_{s',y+y'}D_{\theta, w} u(t,x_4)    -D_{s',y}D_{\theta,w+w'} u(t,x_4) \\
 &\qquad+ D_{s',y}D_{\theta,w}u(t,x_4)\big\|_4.
\end{align*}

Next,  we will estimate  $\cA_0$. Suppose $0<r<s<t$, and $0<\theta< s'<t$. We deduce from Proposition \ref{corodu} that 
\begin{align*}
\cA_0 \lesssim & \int_{[-R,R]^4}dx_1dx_2dx_3dx_4\int_{\R^6} dy dy' dz dz'  dw dw'  | y'|^{2H_1-2} |z'|^{2H_1-2}| w'|^{2H_1-2}\\
&\times  \big(\Phi_{t-r',z'}p_{4(t-r')}\big) (x_1-z) \big( \Phi_{t-\theta',w'}p_{4(t-\theta')} \big)(x_2-w) \\
&\times \Lambda_{r,z',s,y'} (p_{4(t-s)},p_{4(s-r)})(x_3-y,y-z)\Lambda_{\theta,w',s',y'} (p_{4(t-s')},p_{4(s'-\theta)})(x_4-y,y-w).
\end{align*}
Now we first  integrate  out $x_3$, $x_4$ and $x_2$ one by one (see Remark \ref{rem_Lambda}):
\begin{align*}
\cA_0 \lesssim & \int_{-R}^Rdx_1\int_{\R^6} dy dy' dz dz'   dw dw'  | y'|^{2H_1-2} |z'|^{2H_1-2}| w'|^{2H_1-2}\\
&\times  \big(\Phi_{t-r',z'}p_{4(t-r')}\big) (x_1-z) \big( \Phi_{t-\theta',w'}  \1_\R \big)(0) \\
&\times \Lambda_{r,z',s,y'} ( \1_\R,p_{4(s-r)})(0,y-z)\Lambda_{\theta,w',s',y'} ( \1_\R,p_{4(s'-\theta)})(0,y-w),
\end{align*}
and then integrate  out $w$, $y$ and $z$ to get
\begin{align*}
\cA_0 \lesssim & \int_{-R}^Rdx_1 \int_{\R^3} dy' dz'    dw'  | y'|^{2H_1-2} |z'|^{2H_1-2}| w'|^{2H_1-2}\\
&\times  \big(\Phi_{t-r',z'} \1_\R\big) (0) \big( \Phi_{t-\theta',w'}  \1_\R \big)(0)   \Lambda_{r,z',s,y'} ( \1_\R, \1_\R)(0, 0)\Lambda_{\theta,w',s',y'} ( \1_\R, \1_\R)(0, 0).
\end{align*}
Applying Cauchy-Schwarz inequality, we can further deduce that
\begin{align*}
\cA_0\lesssim &\,\, R \int_{\R} dy' | y'|^{2H_1-2}  \Big( \int_{\R^2}  dz'    dw'   |z'|^{2H_1-2}| w'|^{2H_1-2}  \big\vert \big(\Phi_{t-r',z'} \1_\R\big) (0) \big( \Phi_{t-\theta',w'}  \1_\R \big)(0) \big\vert^2 \Big)^{1/2}  \\
&\times  \Big( \int_{\R^2} dz'    dw'   |z'|^{2H_1-2}| w'|^{2H_1-2}  \big\vert \Lambda_{r,z',s,y'} ( \1_\R, \1_\R)(0, 0)\Lambda_{\theta,w',s',y'} ( \1_\R, \1_\R)(0, 0) \big\vert^2 \Big)^{1/2}\\
\lesssim &\,\,R  \Big( \int_{\R^2}  dz'    dw'   |z'|^{2H_1-2}| w'|^{2H_1-2}  \big\vert \big(\Phi_{t-r',z'} \1_\R\big) (0) \big( \Phi_{t-\theta',w'}  \1_\R \big)(0) \big\vert^2 \Big)^{1/2}  \\
&\times \Big( \int_{\R^2} dy'  dz'    | y'|^{2H_1-2}       |z'|^{2H_1-2}    \big\vert \Lambda_{r,z',s,y'} ( \1_\R, \1_\R)(0, 0)  \big\vert^2  \Big)^{1/2}  \\
&  \times  \Big( \int_{\R^2} dy'  dw'| y'|^{2H_1-2}     w'|^{2H_1-2}  \big\vert  \Lambda_{\theta,w',s',y'} ( \1_\R, \1_\R)(0, 0) \big\vert^2 \Big)^{1/2}.
\end{align*}
Due to the fact that $2H_1+2H_0-\frac{5}{2}>- 1$ and $2H_1 <1$, we have 
\begin{align}
&\quad \int_{\R}dz' |z'|^{2H_1-2}\big|(\Phi_{t-r',z'}\1_{\R})(0)\big|^2  \notag  \\
& = 8\int_{\sqrt{t-r'}}^{\infty}dz' |z'|^{2H_1-2}+ 2 (t-r')^{\frac{1}{4}-H_0}\int_{0}^{\sqrt{t-r'}}|z'|^{2H_1+2H_0-\frac{5}{2}}dz' \lesssim (t-r')^{H_1-\frac{1}{2}}.\label{ineq_PHI}
\end{align}
We can also deduce the next inequality by definition of $\Lambda$ and Remark \ref{rem_0_1},
 \begin{align*}
 &\quad  \big\vert \Lambda_{r,z',s,y'} (\1_{\R},\1_{\R})(0,0) \big\vert^2 \\
 &\lesssim \big\vert  N_{s-r}(y')N_r(z')+N_{s-r}(y')N_{s-r}(z')  +N_{t-s}(y')N_r(z')+N_{t-s}(y')N_{s-r}(z')\big|^2 \\
 &\lesssim N_{s-r}(y')^2 N_r(z')^2+N_{s-r}(y')^2N_{s-r}(z')^2  +N_{t-s}(y')^2N_r(z')^2+N_{t-s}(y')^2N_{s-r}(z')^2,
 \end{align*}
 which, together with \eqref{ineq_nn}, implies 
\begin{align} 
&\int_{\R^2} dy' dz' |y'|^{2H_1-2}|z'|^{2H_1-2} |\Lambda_{r,z',s,y'} (\1_{\R},\1_{\R})(0,0)|^2 \notag  \\
\lesssim&   (s-r)^{H_1-\frac{1}{2}}r^{H_1-\frac{1}{2}}+(s-r)^{2H_1-1}+(t-s)^{H_1-\frac{1}{2}}r^{H_1-\frac{1}{2}}+(t-s)^{H_1-\frac{1}{2}}(s-r)^{H_1-\frac{1}{2}}.\label{ineq_LAM}
\end{align}
 As a consequence of \eqref{ineq_PHI} and \eqref{ineq_LAM}, we get $\cA_0 \lesssim \cB_0 R $ with
\begin{align*}
& \cB_0:=   (t-r')^{\frac{H_1}{2} - \frac{1}{4}}(t-\theta')^{\frac{H_1}{2} - \frac{1}{4}}  \\
&\times \big[(s-r)^{\frac{H_1}{2} - \frac{1}{4}}r^{\frac{H_1}{2} - \frac{1}{4}}+(s-r)^{H_1-\frac{1}{2}}+(t-s)^{\frac{H_1}{2} - \frac{1}{4}}r^{\frac{H_1}{2} - \frac{1}{4}}+(t-s)^{\frac{H_1}{2} - \frac{1}{4}}(s-r)^{\frac{H_1}{2} - \frac{1}{4}}\big]\\
&\times \big[(s'-\theta)^{\frac{H_1}{2} - \frac{1}{4}}\theta^{\frac{H_1}{2} - \frac{1}{4}}+(s'-\theta)^{H_1-\frac{1}{2}}+(t-s')^{\frac{H_1}{2} - \frac{1}{4}}\theta^{\frac{H_1}{2} - \frac{1}{4}}+(t-s')^{\frac{H_1}{2} - \frac{1}{4}}(s'-\theta)^{\frac{H_1}{2} - \frac{1}{4}}\big].
\end{align*}
Notice that with $\gamma_0(s) = |s|^{2H_0-2}$ for  $H_0\in(1/2, 1)$. This allows us to apply the embedding inequality \eqref{embd} and get
\begin{align*}
& \quad \int_{\substack{0<r<s<t \\ 0<\theta<s'<t}  }drds d\theta ds'\int_{[0,t]^2}dr'd\theta'  \gamma_0(s-s') \gamma_0(r-r') \gamma_0(\theta - \theta' )   \cB_0 \\
& \lesssim \Big\{ \int_{0<r<s<t} drds \int_0^t d\theta' (t-\theta')^{\frac{H_1}{2H_0} - \frac{1}{4H_0}} \big((s-r)^{\frac{H_1}{2} - \frac{1}{4}}r^{\frac{H_1}{2} - \frac{1}{4}}+(s-r)^{H_1-\frac{1}{2}} \\
 &\qquad\qquad\qquad +(t-s)^{\frac{H_1}{2} - \frac{1}{4}}r^{\frac{H_1}{2} - \frac{1}{4}}+(t-s)^{\frac{H_1}{2} - \frac{1}{4}}(s-r)^{\frac{H_1}{2} - \frac{1}{4}}\big)^{1/H_0} \Big\}^{2H_0} < +\infty.
\end{align*}
Therefore, 
\begin{align*}
 \int_{\substack{0<r<s<t \\ 0<\theta<s'<t } }drds d\theta ds'\int_{[0,t]^2}dr'd\theta'    \gamma_0(s-s') \gamma_0(r-r') \gamma_0(\theta - \theta' )      \cA_0    \lesssim R.
\end{align*}
For the case $\gamma_0=\delta_0$, the expression for $\cB_0$ reduces to 
\begin{align*}
& \cB_0=   (t-r)^{\frac{H_1}{2} - \frac{1}{4}}(t-\theta)^{\frac{H_1}{2} - \frac{1}{4}}\\
&\times \big[(s-r)^{\frac{H_1}{2} - \frac{1}{4}}r^{\frac{H_1}{2} - \frac{1}{4}}+(s-r)^{H_1-\frac{1}{2}}+(t-s)^{\frac{H_1}{2} - \frac{1}{4}}r^{\frac{H_1}{2} - \frac{1}{4}}+(t-s)^{\frac{H_1}{2} - \frac{1}{4}}(s-r)^{\frac{H_1}{2} - \frac{1}{4}}\big]\\
&\times \big[(s-\theta)^{\frac{H_1}{2} - \frac{1}{4}}\theta^{\frac{H_1}{2} - \frac{1}{4}}+(s-\theta)^{H_1-\frac{1}{2}}+(t-s)^{\frac{H_1}{2} - \frac{1}{4}}\theta^{\frac{H_1}{2} - \frac{1}{4}}+(t-s)^{\frac{H_1}{2} - \frac{1}{4}}(s-\theta)^{\frac{H_1}{2} - \frac{1}{4}}\big].
\end{align*}
and for the same reason as above, 
\[
 \int_{\substack{0<r<s<t \\ 0<\theta<s<t } }drds d\theta    \cA_0  \lesssim R \int_{\substack{0<r<s<t \\ 0<\theta<s<t } }drds d\theta    \cB_0  \lesssim R.
\]
The remaining three cases
\[
\begin{cases}
\text{$0<s<r<t$ and $0<\theta< s'<t$}\\
\text{$0<r<s<t$ and $0<s'<\theta<t$}\\
    \text{$0<s<r<t$ and $0<s'<\theta<t$}
\end{cases}
\]
can be   estimated in an almost same  way and finally we can  get the same upper bounds. That is, we obtain the desired bound  $\mathcal{A}\lesssim R$ and hence conclude the proof of \eqref{main_TV} under Hypothesis \ref{h2}. \hfill $\square$

\subsection{Proof of some auxiliary results}\label{sec5}
In this subsection, we introduce some auxiliary results and provide the proof of Lemma \ref{produ} and \ref{prod2u} in Section \ref{ss.pfp1}.

\subsubsection{Estimates for fixed Wiener chaoses}\label{ss.estgk}
Fix $0<s<t<\infty$ and $x,y\in \R$. For any $n=1,2,\dots$, let $g_{s,t,x,n}^1$ and $g_{s,y,t,x,n}^2$ be functions on $[s,t]^n\times \R^n$ given by
\begin{align}\label{gn1}
g_{s, t,x,n}^1(\bs_n,\by_n)=n!f_{t,x,n}(\bs_n,\by_n)\1_{[s,t]^n}(\bs_n),
\end{align}
and
\begin{align}\label{gn2}
g_{s,y,t,x,n}^2(\bs_n,\by_n)=p_{s_{\sigma(1)}-s}(y_{\sigma(1)}-y)g_{s, t,x,n}^1(\bs_n,\by_n),
\end{align}
where $f_{t,x,n}$ is defined as in \eqref{cos2} and $\sigma$, because of the indicator function $\1_{[s,t]^n}$, is now a permutation on $\{1,\dots, n\}$ such that $s<s_{\sigma(1)}<\dots <s_{\sigma(n)}<t$. 

In Section \ref{ss.pfp1} below, we will see that $g^1$ and $g^2$ are closely related to the chaos coefficients of the Mallivain derivatives of $u$. In fact, the next lemmas, which give some estimates for $g_n^1$ and $g_n^2$, are essential to the proofs of Lemmas \ref{produ} and \ref{prod2u}.
\begin{lemma}\label{lmgn1}
Let $0<s<t<\infty$ and let $x\in \R$. Fix a positive integer $n$, and let $g_n^1$ be given as in \eqref{gn1}. Then, the following equalities hold.
\begin{align}\label{gn11}
\Big(\int_{[s,t]^n}d\bs_n \|g^{1}_{s,t,x,n}(\bs_n,\bullet)\|_{\cH_1^{\otimes n}}^{\frac{1}{H_0}}\Big)^{2H_0}\leq C_2(n,t-s) 
\end{align}
and
\begin{align}\label{gn12}
&\Big(\int_{[s,t]^n}d\bs_n\|g_{s, t,x+x',n}^1(\bs_n,\bullet)-g_{s, t,x,n}^1(\bs_n,\bullet)\|_{\cH_1^{\otimes n}}^{\frac{1}{H_0}}\Big)^{2H_0}\leq  C_2(n,t-s)N_{t-s}( x'),
\end{align}
where $N_t$ is defined as in \eqref{n} and $ C_2(n,t)=c_1c_2^n\Gamma((1-H_1)n+1)t^{(2H_0+H_1-1)n}$ for all positive integers $n$ and real numbers  $t>0$ with some constants $c_1, c_2$ depending on $H_0$ and $H_1$.
\end{lemma}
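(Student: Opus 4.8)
The plan is to pass to the Fourier side in the spatial variable, exploit the chaining structure of the kernels $g^{1}_{s,t,x,n}$, and reduce both estimates to an iterated Beta integral over a simplex, which is then closed up by the Gamma-function asymptotics of Lemma~\ref{lmm_gmm} and Corollary~\ref{coro_gmm}. Concretely, since $\|g^{1}_{s,t,x,n}(\bs_n,\bullet)\|_{\cH_1^{\otimes n}}$ is symmetric in $\bs_n$, one replaces $\int_{[s,t]^n}d\bs_n$ by $n!\int_{\dT_n^{s,t}}d\bs_n$; on the simplex $\dT_n^{s,t}$ the kernel \eqref{gn1} is the chain $g^{1}_{s,t,x,n}(\bs_n,\by_n)=p_{t-s_n}(x-y_n)\prod_{j=1}^{n-1}p_{s_{j+1}-s_j}(y_{j+1}-y_j)$, whose spatial Fourier transform computes explicitly to $e^{-i\eta_n x}\prod_{j=1}^{n}e^{-(s_{j+1}-s_j)|\eta_j|^2/2}$, where $\eta_j=\xi_1+\dots+\xi_j$ and $s_{n+1}:=t$. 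Using \eqref{c_a_22}, then the unimodular substitution $\xi_j\mapsto\eta_j$, then subadditivity of $r\mapsto r^{1-2H_1}$ applied to the weight $\prod_{j=1}^{n}|\eta_j-\eta_{j-1}|^{1-2H_1}$ (with $\eta_0:=0$), and finally the $n$ Gaussian integrals, one reaches a master bound
\[
\|g^{1}_{s,t,x,n}(\bs_n,\bullet)\|_{\cH_1^{\otimes n}}^2\ \le\ c^n\sum_{(\le\, 2^n\ \text{terms})}\ \prod_{j=1}^{n}(s_{j+1}-s_j)^{-\alpha_j},
\]
where in each term $\alpha_j\in\{\tfrac12,\,1-H_1,\,\tfrac32-2H_1\}$ with $\sum_{j=1}^{n}\alpha_j=n(1-H_1)$, and --- the crucial structural point --- $\alpha_n\in\{\tfrac12,\,1-H_1\}$, since the top frequency $\eta_n$ enters only one of the factors $|\eta_j-\eta_{j-1}|^{1-2H_1}$. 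Hypothesis~\ref{h2} ($H_0+H_1>\tfrac34$) is exactly what forces $\max_j\alpha_j\le\tfrac32-2H_1<2H_0$.

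To deduce \eqref{gn11}, I would apply subadditivity of $r\mapsto r^{1/(2H_0)}$ (legitimate as $H_0\ge\tfrac12$) to the master bound, integrate term by term over $\dT_n^{s,t}$ by the iterated Beta integral $\int_{\dT_n^{s,t}}\prod_j(s_{j+1}-s_j)^{-\gamma_j}\,d\bs_n=(t-s)^{n-\sum_j\gamma_j}\,\frac{\prod_j\Gamma(1-\gamma_j)}{\Gamma(n+1-\sum_j\gamma_j)}$ with $\gamma_j=\alpha_j/(2H_0)<1$, bound the at most $2^n$ numerators $\prod_j\Gamma(1-\gamma_j)$ by $c^n$, multiply by $n!$, and raise to the power $2H_0$. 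Since $\sum_j\alpha_j=n(1-H_1)$, the power of $t-s$ produced is $2H_0n-n(1-H_1)=(2H_0+H_1-1)n$, and the Gamma contribution is $c^n\big(\Gamma(n+1)/\Gamma(\kappa n+1)\big)^{2H_0}$ with $\kappa:=1-(1-H_1)/(2H_0)\in(0,1)$; Stirling's formula (Lemma~\ref{lmm_gmm}(i)) and Corollary~\ref{coro_gmm} turn this into $c_1c_2^n\Gamma((1-H_1)n+1)$, giving exactly $C_2(n,t-s)$.

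For \eqref{gn12}, put $\phi:=g^{1}_{s,t,x+x',n}-g^{1}_{s,t,x,n}$; on $\dT_n^{s,t}$ only the top kernel depends on $x$, so $\phi(\bs_n,\by_n)=\Delta_{t-s_n}(x-y_n,x')\prod_{j=1}^{n-1}p_{s_{j+1}-s_j}(y_{j+1}-y_j)$ with $\Delta$ as in \eqref{delta}. If $|x'|>\sqrt{t-s}$, then $N_{t-s}(x')=1$ and, by translation invariance of $\|\cdot\|_{\cH_1^{\otimes n}}$ together with \eqref{gn11}, the left side of \eqref{gn12} is $\le 4\,C_2(n,t-s)$, which suffices. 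If $|x'|\le\sqrt{t-s}$ (so $N_{t-s}(x')=(t-s)^{1/8-H_0/2}|x'|^{H_0-1/4}$), I would split again according to whether $|x'|\le\sqrt{t-s_n}$, using Lemma~\ref{tl1} with $\beta=H_0-\tfrac14$ (so $\Phi=\Phi^{\beta}$ as in Proposition~\ref{corodu}). When $|x'|\le\sqrt{t-s_n}$, Lemma~\ref{tl1} gives $|\Delta_{t-s_n}(x-y_n,x')|\le c_\beta(t-s_n)^{-\beta/2}|x'|^{\beta}p_{4(t-s_n)}(x-y_n)$; running the Fourier computation of the first paragraph on the resulting chain produces the master bound with an extra factor $|x'|^{2\beta}(t-s_n)^{-\beta}$ on the factor associated with $t-s_n$, and the combined exponent there, $\alpha_n+\beta\le(1-H_1)+(H_0-\tfrac14)<2H_0$, is again precisely what $H_0+H_1>\tfrac34$ and $\alpha_n\le1-H_1$ provide; so the Beta-integral argument goes through verbatim, and using $|x'|^{2\beta}\le|x'|^{\beta}(t-s)^{\beta/2}$ and $(t-s)^{-\beta/2}|x'|^{\beta}=N_{t-s}(x')$ one recovers $C_2(n,t-s)N_{t-s}(x')$. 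When $\sqrt{t-s_n}<|x'|\le\sqrt{t-s}$, one has $s_n\in(t-|x'|^2,t)$ and $|\Delta_{t-s_n}(x-y_n,x')|\le c_\beta\big(p_{4(t-s_n)}(x-y_n+x')+p_{4(t-s_n)}(x-y_n)\big)$, so $\phi$ obeys the plain master bound (with $\alpha_n\le1-H_1$); integrating $s_1<\dots<s_{n-1}$ as before and then $s_n$ over the short interval $(t-|x'|^2,t)$ generates the missing power of $|x'|$, and $\alpha_n\le1-H_1\le\tfrac32H_0+\tfrac18$ (a consequence of $H_0\ge\tfrac12$ and $H_0+H_1>\tfrac34$) allows one to trade the surplus power of $|x'|$ for a power of $t-s$ via $|x'|\le\sqrt{t-s}$, landing once more on $C_2(n,t-s)N_{t-s}(x')$.

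The main obstacle is \eqref{gn12}. Two points demand care: first, keeping every combined singularity exponent strictly below the integrability threshold $2H_0$, which forces the observation that the top frequency $\eta_n$ carries only a single weight $|\cdot|^{1-2H_1}$, so that $\alpha_n\le1-H_1$ rather than merely $\le\tfrac32-2H_1$ --- and this is exactly where the constraint $H_0+H_1>\tfrac34$ is consumed; second, the bookkeeping in the case split $|x'|$ versus $\sqrt{t-s}$ and versus $\sqrt{t-s_n}$, where powers of $t-s$ and of $|x'|$ must be matched so as to reproduce exactly the weight $N_{t-s}(x')$, the short-interval integral $\int_{t-|x'|^2}^{t}(t-s_n)^{-\alpha_n/(2H_0)}\,ds_n$ and the elementary estimate $|x'|^a\le(t-s)^{a/2}$ (for $a\ge0$ and $|x'|\le\sqrt{t-s}$) being the two mechanisms that produce the factor $|x'|^{H_0-1/4}$ and absorb the remainder into $(t-s)^{1/8-H_0/2}$. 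The remaining ingredients --- the explicit Fourier transform of the chain, the iterated Beta integral, and the Gamma asymptotics --- are routine.
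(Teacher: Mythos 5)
Your argument for \eqref{gn11} is sound and follows essentially the paper's route: Fourier transform of the chain, subadditivity of $r\mapsto r^{1-2H_1}$, the Gaussian integrals, the Dirichlet--Beta integral over the simplex, and the Gamma asymptotics of Corollary \ref{coro_gmm}; in particular your observation that the exponent attached to $t-s_n$ never exceeds $1-H_1$ is the same structural fact the paper exploits.

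The proof of \eqref{gn12}, however, has a genuine gap at its central step. In the regime $|x'|\le\sqrt{t-s_n}$ you bound $|\Delta_{t-s_n}(x-y_n,x')|$ pointwise by $c\,(t-s_n)^{-\beta/2}|x'|^{\beta}p_{4(t-s_n)}(x-y_n)$ via Lemma \ref{tl1} and then compute the $\cH_1^{\otimes n}$-norm of the majorizing chain. Under Hypothesis \ref{h2} this transfer is not legitimate: $\|\cdot\|_{\cH_1}$ is a homogeneous Sobolev norm of positive order $\tfrac12-H_1$ (spectral weight $|\xi|^{1-2H_1}$, equivalently the Gagliardo form \eqref{def_gag}), and such a norm is not monotone under pointwise domination of the kernel — $|\phi|\le\psi$ gives no control of $|\phi(x)-\phi(y)|$ by $|\psi(x)-\psi(y)|$, nor of $\int|\xi|^{1-2H_1}|\widehat{\phi}|^2$ by $\int|\xi|^{1-2H_1}|\widehat{\psi}|^2$. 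This is precisely where the rough case differs from the regular one (where $\gamma_1\ge 0$ makes such comparisons valid); in the paper, Lemma \ref{tl1} is used only later, in physical space, inside the proof of Proposition \ref{corodu}, never inside an $\cH_1$-norm. The same objection applies to your sub-case $\sqrt{t-s_n}<|x'|$, though there the exact difference structure $\Delta_{t-s_n}(\cdot,x')=p_{t-s_n}(\cdot+x')-p_{t-s_n}(\cdot)$ lets you repair it trivially by the triangle inequality and translation invariance, as in your $|x'|>\sqrt{t-s}$ case. The delicate sub-case must instead be handled on the Fourier side, which is what the paper does: the increment produces the factor $|e^{-ix'\eta_n}-1|$, bounded via \eqref{eixz} by $|x'\eta_n|\1_{\{|x'\eta_n|\le1\}}+2\,\1_{\{|x'\eta_n|>1\}}$ (or, in the spirit of your interpolation, by $2|x'\eta_n|^{\theta}$ with $\theta=H_0-\tfrac14$), and the resulting $\eta_n$-integrals are evaluated in the two regimes; the integrability requirement becomes $\theta<2H_0+H_1-1$, i.e.\ exactly $H_0+H_1>\tfrac34$, and with this replacement your Beta-integral bookkeeping and the final matching with $N_{t-s}(x')$ do go through.
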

\begin{proof}
Fix $s<s_1<s_2<\dots<s_n<t$. Denote by $\hg_n^1$ the Fourier transformation of $g_n^1$ with respect to the spatial arguments. Following  the idea in  \cite[Theorem 3.4]{abel-18-hu-huang-le-nualart-tindel}, we can show that
\begin{align}\label{gn111}
\|g^{1}_{s, t,x,n}(\bs_n,\bullet)&\|_{\cH_1^{\otimes n}}^2=c_{H_1}^n\int_{\R^n}d\bxi_n \prod_{j=1}^n|\xi_j|^{1-2H_1} |\hg^{1}_{s,t,x,n}(\bs_n,\bxi_n)|^2\nonumber\\
\leq &c_{H_1}^n\sum_{\balpha_k\in D_n}\int_{\R^n}d\pmb{\eta}_n |\eta_1|^{1-2H_1}\prod_{j=1}^n|\eta_j|^{\alpha_j}\times \prod_{j=1}^{n-1}e^{-(s_{j+1}-s_j)|\eta_j|^2}\times e^{-(t-s_n)|\eta_n|^2}\nonumber\\
\leq & c_1c_2^n\sum_{\balpha_n\in D_n}(s_2-s_1)^{-\frac{2-2H_1+\alpha_1}{2}}\times \prod_{j=2}^{n}(s_{j+1}-s_j)^{-\frac{1+\alpha_j}{2}},
\end{align}
where $D_n$ is a collection of multi-indexes $\balpha_k=(\alpha_1,\dots,\alpha_n)$ with
\begin{align}\label{dk1}
\alpha_1,\alpha_n\in\{0,1-2H_1\},\ \alpha_i\in \{0,1-2H_1,2(1-2H_1)\},\ \forall i=2,\dots, n-1,
\end{align}
and
\begin{align}\label{dk2}
\sum_{i=1}^n\alpha_i=(n-1)(1-2H_1).
\end{align}
Applying \cite[Lemma 4.5]{HHNT15}, and the fact that
\[
\Big(\sum_{i=1}^m x_i\Big)^{\gamma}\leq m^{\gamma}\sum_{i=1}^mx_i^{\gamma}
\]
for all $m=1,2,\dots$, and $x_1,\dots,x_m,\gamma>0$, we get from \eqref{gn111} that
\begin{align}\label{gk2}
\int_{[s,t]^n}d\bs_n \|g^{1}_{s,t,x,n}(\bs_n,\bullet)\|_{\cH_1^{\otimes n}}^{\frac{1}{H_0}}=&n!\int_{\dT^{s,t}_n}d\bs_n\|g^{1}_{s,t,x,n}(\bs_n,\bullet)\|_{\cH_1^{\otimes n}}^{\frac{1}{H_0}}\nonumber\\
\leq &c_1c_2^nn!(t-s)^{\frac{2H_0+H_1-1}{2H_0}n}\Gamma\Big(\frac{2H_0+H_1-1}{2H_0}n+1\Big)^{-1}.
\end{align}
Inequality \eqref{gn11} is thus a consequence of inequality \eqref{gk2} and Corollary \ref{coro_gmm}. 

The proof of \eqref{gn12} is quite similar. Fix $s<s_1<\dots<s_n<t$. By the Fourier transformation, we can write
\begin{align*}
J_n:=&\|g^1_{s,t,x+x',n}(\bs_n,\bullet)-g^1_{s, t,x,n}(\bs_n,\bullet)\|_{\cH_1^{\otimes n}}^2\\
=&c_{H_1}^n\int_{\R^n}d\bxi_n \prod_{j=1}^n|\xi_j|^{1-2H_1} |\hg_{s,t,x+x',n}^{1}(\bs_{n}, \bxi_{n})-\hg_{s,t,x,n}^{1}(\bs_{n}, \bxi_{n})|^2\nonumber\\
\leq &c_{H_1}^n\sum_{\balpha\in D_n}\int_{\R^n}d\pmb{\eta}_n |\eta_1|^{1-2H_1}\prod_{j=1}^n|\eta_j|^{\alpha_j}\times \prod_{j=1}^n e^{-(s_{j+1}-s_j)|\eta_j|^2}|e^{-i(x+x')\eta_n}-e^{-ix\eta_n}|^2,
\end{align*}
where $D_n$ is a set of multi-indexes defined as in \eqref{dk1} and \eqref{dk2}. Using the elementary calculus, we can show that for all $x\in \R$,
\begin{align}\label{eixz}
|e^{-ix}-1|\leq |x|\1_{\{|x|\leq 1\}}+2\times \1_{\{|x|> 1\}},
\end{align}
and thus,
\begin{align*}
J_n\leq &c_1c_2^n\sum_{\balpha\in D_n}\Big[|x'|^2\int_{\R^n}d\pmb{\eta}_n |\eta_1|^{1-2H_1}\prod_{j=1}^{n}|\eta_j|^{\alpha_j}|\eta_n|^{2} \prod_{j=1}^ne^{-(s_{j+1}-s_j)|\eta_j|^2}\1_{\{|x'\eta_n|\leq 1\}}\\
&+\int_{\R^n}d\pmb{\eta}_n |\eta_1|^{1-2H_1}\prod_{j=1}^{n}|\eta_j|^{\alpha_j}\times \prod_{j=1}^ne^{-(s_{j+1}-s_j)|\eta_j|^2}\1_{\{|x'\eta_n|> 1\}}\Big]\\
\leq &c_1c_2^n\sum_{\balpha_n\in D_n}(s_2-s_1)^{-\frac{2-2H_1+\alpha_1}{2}}\times \prod_{j=2}^{n-1}(s_{j+1}-s_j)^{-\frac{1+\alpha_j}{2}}\\
&\times \Big[|x'|^2\int_0^{|x'|^{-1}}d\eta_n|\eta_n|^{2+\alpha_n}e^{-(t-s_n)\eta_n^2}+\int_{|x'|^{-1}}^{\infty}d\eta_n|\eta_n|^{\alpha_n}e^{-(t-s_n)\eta_n^2}\Big].
\end{align*}
Preforming the changing of variable $\sqrt{t-s_n}\eta_n=\eta$, we can write
\begin{align*}
\int_0^{|x'|^{-1}}&d\eta_n|\eta_n|^{2+\alpha_n}e^{-(t-s_n)\eta_n^2}=(t-s_n)^{-\frac{3}{2} - \frac{1}{2}\alpha_n}\int_{0}^{\frac{\sqrt{t-s_n}}{|x'|}}d\eta|\eta|^{2+\alpha_n}e^{-\eta^2}\\
\leq &(t-s_n)^{-\frac{3}{2} - \frac{1}{2}\alpha_n}\Big(\int_{0}^{\infty}d\eta|\eta|^{2+\alpha_n}e^{-\eta^2}\1_{\{|x'|\leq \sqrt{t-s_n}\}}\\
&\quad+\int_{0}^{\frac{\sqrt{t-s_n}}{|x'|}}d\eta|\eta|^{2+\alpha_n}\1_{\{|x'|> \sqrt{t-s_n}\}}\Big)\\
\leq &c_1(t-s_n)^{-\frac{3}{2} - \frac{1}{2}\alpha_n}\big(\1_{\{|x'|\leq \sqrt{t-s_n}\}}+(t-s_n)^{\frac{3+\alpha_n}{2}}|x'|^{-3-\alpha_n}\1_{\{|x'|> \sqrt{t-s_n}\}}\big)\\
\leq &c_1\big((t-s_n)^{-\frac{1}{4} - \frac{1}{2}\alpha_n-H_0}|x'|^{-\frac{5}{2}+2H_0}\1_{\{|x'|\leq \sqrt{t-s_n}\}}+(t-s_n)^{-\frac{1+\alpha_n}{2}}|x'|^{-2}\1_{\{|x'|> \sqrt{t-s_n}\}}\big).
\end{align*}
Similarly, we can also show that
\begin{align*}
\int_{|x'|^{-1}}^{\infty}&d\eta_n|\eta_n|^{\alpha_n}e^{-(t-s_n)\eta_n^2}=(t-s_n)^{-\frac{1}{2} - \frac{1}{2}\alpha_n}\int_{\frac{\sqrt{t-s_n}}{|x'|}}^{\infty}d\eta_n|\eta_n|^{\alpha_n}e^{-\eta_n^2}\\
\leq &c_1(t-s_n)^{-\frac{1}{2} - \frac{1}{2}\alpha_n}\bigg(\Big(\frac{|x'|}{t-s_n}\Big)^{2}\1_{\{|x'|\leq \sqrt{t-s_n}\}}+\1_{\{|x'|> \sqrt{t-s_n}\}}\bigg)\\
\leq &c_1\big((t-s_n)^{-\frac{1}{4} - \frac{1}{2}\alpha_n-H_0}|x'|^{-\frac{5}{2}+2H_0}\1_{\{|x'|\leq \sqrt{t-s_n}\}}+(t-s_n)^{-\frac{1+\alpha_n}{2}}|x'|^{-2}\1_{\{|x'|> \sqrt{t-s_n}\}}\big).
\end{align*}
Therefore,
\begin{align*}
J_n\leq & c_1c_2^n\sum_{\balpha_n\in D_n}(s_2-s_1)^{-\frac{2-2H_1+\alpha_1}{2}}\times \prod_{j=2}^{n-1}(s_{j+1}-s_j)^{-\frac{1+\alpha_j}{2}}\\
&\times \big((t-s_n)^{-\frac{1}{4} - \frac{1}{2}\alpha_n-H_0}|x'|^{-\frac{1}{2}+2H_0}\1_{\{|x'|\leq \sqrt{t-s_n}\}}+(t-s_n)^{-\frac{1+\alpha_n}{2}}\1_{\{|x'|> \sqrt{t-s_n}\}}\big).
\end{align*}
By using \cite[Lemma 4.5]{HHNT15} again, 
we get the following inequality,
\begin{align}\label{jnmin}
\Big(n!\int_{\dT_n^{s,t}}d\bs_n J_n^{\frac{1}{2H_0}}\Big)^{2H_0}\leq &c_1c_2^n\Big[n!\Gamma\Big(\frac{2H_0+H_1-1}{2H_0}n+1\Big)^{-1}(t-s)^{\frac{2H_0+H_1-1}{2H_0}n}\nonumber\\
&\times \big(|x'|^{\frac{-1+4H_0}{4H_0}}(t-s)^{\frac{1-4H_0}{8H_0}}\1_{\{|x'|\leq \sqrt{t-s}\}}+\1_{\{|x'|>\sqrt{t-s}\}}\big)\Big]^{2H_0}.
\end{align}
Hence, inequality \eqref{gn12} follows from inequality \eqref{jnmin} and Corollary \ref{coro_gmm}. The proof of Lemma \ref{lmgn1} is competed.
\end{proof}

\begin{lemma}\label{lmgn2}
Let $0<s<t<\infty$ and let $x,y\in \R$. For any $n=1,2,\dots$, let $g_n$ be given in \eqref{gn2}. Then, the following equalities hold.
\begin{align}\label{gn21}
&\Big(\int_{[s,t]^n}d\bs_{n}\|g_{s,y,t,x,n}^2(\bs_{n}, \bullet)\|_{\cH_1^{\otimes{n}}}^{\frac{1}{H_0}}\Big)^{2H_0}\leq  C_2(n, t-s)p_{t-s}(x-y)^2,
\end{align}
\begin{align}\label{gn22}
&\Big(\int_{[s,t]^n}d\bs_n\|g_{s,y+y', t,x,n}^2(\bs_n,\bullet)-g_{s, y, t,x,n}^2(\bs_n,\bullet)\|_{\cH_1^{\otimes n}}^{\frac{1}{H_0}}\Big)^{2H_0}\nonumber\\
\leq  &  C_2(n,t-s) \big(\big|\Delta_{t-s}( y-x, y')\big|+p_{t-s}(x-y) N_{t-s}(y')\big)^2,
\end{align}
\begin{align}\label{gn23}
&\Big(\int_{[s,t]^n}d\bs_n\|g_{s,y, t,x+x',n}^2(\bs_n,\bullet)-g_{s,y, t,x,n}^2(\bs_n,\bullet)\|_{\cH_1^{\otimes n}}^{\frac{1}{H_0}}\Big)^{2H_0}\nonumber\\
\leq &  C_2(n,t-s)  \big(\big|\Delta_{t-s}( x-y, x')\big|+p_{t-s}(x-y) N_{t-s}(x')\big)^2, 
\end{align}
and
\begin{align}\label{gn24}
&\Big(\int_{[s,t]^n}d\bs_n\|g_{s,y+y',t,x+x',n}^2(\bs_n, \bullet)-g_{s,y,t,x+x',n}^2(\bs_n, \bullet)\nonumber\\
&\quad-g_{s,y+y',t,x,n}^2(\bs_n, \bullet)+g_{s,y,t,x,n}^2(\bs_n, \bullet)\|_{\cH^{\otimes n}}^{\frac{1}{H_0}}\Big)^{2H_0}\nonumber\\
\leq &  C_2(n,t-s) \big(\big| R_{t-s}( x-y, x', y')\big|+\big|\Delta_{t-s}(x-y,x')\big|N_{t-s}(y')\nonumber\\
&+\big|\Delta_{t-s}(y-x,y')\big|N_{t-s}(x')+p_{t-s}(x-y)N_{t-s}(x')N_{t-s}(y')\big)^2,
\end{align}
where $\Delta_t$, $R_t$ and $N_t$ are defined as in \eqref{delta} - \eqref{n} and $C_2(n,t-s)$ are the same as in Lemma \ref{lmgn1}.
\end{lemma}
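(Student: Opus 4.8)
The plan is to prove all four inequalities by the Fourier-analytic scheme already used for Lemma~\ref{lmgn1} and, in the \textbf{regular case}, for the chain kernels $f^{(k)}_{s,y,k}$ in \eqref{ine_cdt}--\eqref{bdd_fkk1}; the point is that $g^2_{s,y,t,x,n}$ is, on the simplex $s<s_1<\dots<s_n<t$, the chain $p_{s_1-s}(y_1-y)p_{s_2-s_1}(y_2-y_1)\cdots p_{t-s_n}(x-y_n)$ from $(s,y)$ to $(t,x)$ with $n$ intermediate points, so the total propagation $(s,y)\rightsquigarrow(t,x)$ over time $t-s$ can be split off as the prefactor $p_{t-s}(x-y)$. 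Fixing $s<s_1<\dots<s_n<t$ and iterating the tilting identity \eqref{tilt} along this chain while integrating successively in $y_1,\dots,y_n$ exactly as in the derivation of \eqref{ine_fcdt}, one obtains
\[
\widehat{g^2_{s,y,t,x,n}}(\bs_n,\bxi_n)=p_{t-s}(x-y)\,e^{-iyL_1(\bxi_n)}\,e^{-ixL_2(\bxi_n)}\prod_{j=1}^{n}e^{-\frac12 w_j|M_j(\bxi_n)|^2},
\]
where $L_1,L_2,M_1,\dots,M_n$ are explicit linear forms ($M_j$ a weighted partial sum of $\xi_1,\dots,\xi_j$ with coefficients in $(0,1]$) and each $w_j$ is a product of ratios of the gaps $s_i-s_{i-1}$ (with $s_0=s$, $s_{n+1}=t$), with $w_j\le s_j-s_{j-1}$. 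For \eqref{gn21} the two phases disappear from $|\widehat{g^2}|^2$, the factor $p_{t-s}(x-y)^2$ comes out, and, using the Fourier representation \eqref{c_a_22} of the $\cH_1$-norm, the remaining integral is estimated exactly as in Lemma~\ref{lmgn1}: after passing to the $M_j$'s as new variables, $\prod_j|\xi_j|^{1-2H_1}$ is dominated by a sum over the multi-index set $D_n$ of \eqref{dk1}--\eqref{dk2} of products of powers of the $|M_j|$; the $\bxi_n$-integral yields products of powers of the $w_j$, which are controlled by powers of the gaps; the integral over $\dT_n^{s,t}$ is handled by \cite[Lemma~4.5]{HHNT15}; and Corollary~\ref{coro_gmm} yields the bound $C_2(n,t-s)\,p_{t-s}(x-y)^2$.

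For the difference estimates \eqref{gn22}--\eqref{gn24} the crucial observation is that $y$ and $x$ enter $\widehat{g^2}$ only through the prefactor $p_{t-s}(x-y)$ and the two phases $e^{-iyL_1}$ and $e^{-ixL_2}$. Using $p_t(a+b)-p_t(a)=\Delta_t(a,b)$ and, for the mixed difference, the analogous second-difference identity producing $R_t$ (see \eqref{delta}--\eqref{r}), one writes for instance
\[
\widehat{g^2_{s,y+y',t,x,n}}-\widehat{g^2_{s,y,t,x,n}}=\Big[\{p_{t-s}(x-y-y')-p_{t-s}(x-y)\}e^{-i(y+y')L_1}+p_{t-s}(x-y)e^{-iyL_1}\{1-e^{-iy'L_1}\}\Big]e^{-ixL_2}\prod_{j}e^{-\frac12 w_j|M_j|^2};
\]
the $x$-difference is treated the same way, and the four-term mixed difference splits into four pieces whose prefactors are $R_{t-s}(x-y,x',y')$, $\Delta_{t-s}(\cdot,y')(1-e^{-ix'L_2})$, $\Delta_{t-s}(\cdot,x')(1-e^{-iy'L_1})$ and $p_{t-s}(x-y)(1-e^{-iy'L_1})(1-e^{-ix'L_2})$. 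Squaring, using linearity of $\|\cdot\|^2_{\cH_1^{\otimes n}}$ together with $\big(\sum_{i\le m}a_i\big)^2\le m\sum_{i\le m}a_i^2$, and bounding $|1-e^{-i\theta}|\le|\theta|\1_{\{|\theta|\le1\}}+2\,\1_{\{|\theta|>1\}}$ by \eqref{eixz}, each piece is reduced to a Fourier integral of exactly the type appearing in Lemma~\ref{lmgn1}; the pieces carrying a factor $1-e^{-iy'L_1}$ (resp. $1-e^{-ix'L_2}$) produce, after the $\bxi_n$-integration and the integration over $\dT_n^{s,t}$, precisely the weight $N_{t-s}(y')$ (resp. $N_{t-s}(x')$), just as the factor $1-e^{-ix'\eta_n}$ produced $N_{t-s}(x')$ in the proof of \eqref{gn12}. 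Since $1/(2H_0)\le1$, the subadditivity $(\sum a_i)^{1/(2H_0)}\le\sum a_i^{1/(2H_0)}$ lets the integral over $\dT_n^{s,t}$ split over these pieces, and taking the power $2H_0$ recombines them, up to harmless constants, into the squared sums on the right-hand sides of \eqref{gn22}--\eqref{gn24}.

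I expect the main obstacle to be organizational rather than conceptual: pinning down the weighted linear forms $L_1,L_2,M_j$ and the time coefficients $w_j$ that come out of the iterated tilting, and --- above all for \eqref{gn24} --- performing the expansion of the four-term mixed difference and regrouping the resulting terms into the four displayed pieces, while checking that each surviving factor $1-e^{-i\theta}$ sits on the linear form conjugate to the correct spatial variable, so that the resulting weight $N_{t-s}(\cdot)$ carries the right argument. Once this bookkeeping is in place, no analytic ingredient beyond Lemma~\ref{lmgn1}, the tilting identity \eqref{tilt}, the elementary bound \eqref{eixz}, \cite[Lemma~4.5]{HHNT15} and Corollary~\ref{coro_gmm} is needed.
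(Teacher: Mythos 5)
Your scheme is the paper's own: the Fourier representation \eqref{hgk1} obtained by iterating the tilting identity \eqref{tilt} (prefactor $p_{t-s}(x-y)$, one phase in $y$, one phase in $x$, Gaussian factors), the change of variables to the weighted partial sums $\eta_j$, subadditivity of $|\cdot|^{1-2H_1}$, the splitting of each increment into a heat-kernel difference (producing $\Delta_{t-s}$ or $R_{t-s}$) plus phase differences bounded via \eqref{eixz} (producing the $N_{t-s}$ weights), and then the simplex time integration and Corollary \ref{coro_gmm}. For \eqref{gn21}, \eqref{gn22}, and those pieces of \eqref{gn24} whose phase increment sits on the $y$-variable, your plan reproduces Steps 1--2 of the paper's proof essentially verbatim (the paper works with multi-indices $\pmb{\beta}\in\{0,1\}^{n-1}$ after the change of variables rather than with $D_n$, and notes that \cite[Lemma 4.5]{HHNT15} cannot be applied literally but only in spirit; these are cosmetic differences).

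The one place where ``treated the same way'' hides a genuine issue is the $x$-increment. In \eqref{hgk1} the $y$-phase is $\exp\big(-iy\frac{t-s_1}{t-s}\eta_1\big)$, a function of the single variable $\eta_1$, which is why the computation from the proof of \eqref{gn12} (phase in the single variable $\eta_n$) transfers: the indicator $\{|y'c\,\eta_1|\le 1\}$ from \eqref{eixz} only affects a one-dimensional integral. The $x$-phase, however, is $\exp\big(-ix\sum_{j=1}^n\frac{s_j-s_{j-1}}{t-s_{j-1}}\eta_j\big)$, a linear form in all of $\eta_1,\dots,\eta_n$; after \eqref{eixz} the indicators $\{|x'\sum_j c_j\eta_j|\le 1\}$ couple every variable, and the single-variable argument does not carry over verbatim. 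The paper sidesteps this entirely: \eqref{gn23} is deduced from \eqref{gn22} by the time-reversal substitution $s_i=t+s-u_i$, which exchanges the roles of $(s,y)$ and $(t,x)$ and turns the $x$-increment into a $y$-type increment with a one-variable phase; the same device (or an explicit multivariable argument, e.g.\ Cauchy--Schwarz against the Gaussian weight, using that the linear form $\sum_j\frac{s_j-s_{j-1}}{t-s_{j-1}}\eta_j$ corresponds to a Gaussian variable of variance $\frac{s_n-s}{2(t-s_n)(t-s)}$) is what is needed for the pieces of \eqref{gn24} carrying the factor coming from the $x'$-phase. So your outline is structurally correct, but as written that step is asserted rather than proved; you should either add the time-reversal reduction or carry out the multivariable phase estimate explicitly.
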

\begin{remark}
In what follows, one may find some structures that are almost the same as in Section \ref{sec32_MD}. However, because of the rough dependence in space, the maximal principle is not valid under Hypothesis \ref{h2}. Therefore, we provide the estimates via a different approach, which involves more careful computations.
\end{remark}
\begin{proof}[Proof of Lemma \ref{lmgn2}]
We divide the proof of this lemma into three steps. In Step 1, we prove inequality \eqref{gn21}, and then inequalities \eqref{gn22} and \eqref{gn23} in Step 2. Finally, the proof of inequality \eqref{gn24} is left in Step 3.

{\bf Step 1.} Fix $s<s_1<\dots <s_n<t$. Taking into account  formulas \eqref{ine_cdt} and \eqref{ine_fcdt}, we can write 
\begin{align}\label{hgk1}
\hg_{s,y,t,x,n}^{2}(\bs_n,& \bxi_n)=p_{t-s}(x-y)\exp\Big(-iy\frac{t-s_{1}}{t-s}\sum_{k_1=1}^n\Big(\xi_{k_1}\prod_{k_2=2}^{k_1}\frac{t-s_{k_2}}{t-s_{k_2-1}}\Big)\Big)\nonumber\\
&\times \exp\bigg[-ix\sum_{j=1}^n\frac{s_{j}-s_{j-1}}{t-s_{j-1}}\sum_{k_1=j}^n\Big(\xi_{k_1}\prod_{k_2=j+1}^{k_1}\frac{t-s_{k_2}}{t-s_{k_2-1}}\Big)\bigg]\nonumber\\
&\times \prod_{j=1}^n\exp\bigg[-\frac{(t-s_j)(s_j-s_{j-1})}{2(t-s_{j-1})}\Big[\sum_{k_1=j}^n\Big(\xi_{k_1}\prod_{k_2=j+1}^{k_1}\frac{t-s_{k_2}}{t-s_{k_2-1}}\Big)\Big]^2\bigg],
\end{align}
where by convention $s_0=s$ and $y_0=y$. Recall that the maximal inequality is not applicable in this situation and we need to apply another method. Notice that the spectral measure $\mu(d\xi)$ of Hilbert space $\cH_1$ has a density $|\xi|^{1-2H_1}$ under Hypothesis \ref{h2}. This allows us to perform a change of variables. For any $j=1,\dots,n$, let 
\[
\eta_j=\sum_{k_1=j}^n\xi_{k_1}\prod_{k_2=j+1}^{k_1}\frac{t-s_{k_2}}{t-s_{k_2-1}}.
\]
Then, it is clear that $\xi_n=\eta_n$ and 
\[
\xi_j=\eta_j-\Big(\frac{t-s_{j+1}}{t-s_{j}}\Big)\eta_{j+1},
\]
for all $j=k+2,\dots,n$. Denote by $\Sigma=\Sigma_n=\frac{\partial \bxi_n}{\partial \pmb{\eta}_n}$ the Jacobian matrix of the transformation $\bxi_n\to \pmb{\eta}_n$. Then, we have
 $\det (\Sigma)=1$ and thus
\begin{align}\label{gk11}
\|g_{s,y,t,x,n}^2(\bs_n, \bullet)\|_{\cH_1^{\otimes{n}}}^2=&c_{H_1}^{n}\int_{\R^{n}}d\bxi_n\prod_{i=1}^n|\xi_i|^{1-2H_1}|\hg_{s,y,t,x,n}^2(\bs_n, \bxi_n)|^2\nonumber\\
=& c_{H_1}^{n}p_{t-s}(x-y)^2\int_{\R^{n}}d\pmb{\eta}_n  \prod_{i=1}^{n-1}\Big|\eta_i-\frac{t-s_{i+1}}{t-s_{i}}\eta_{i+1}\Big|^{1-2H_1}|\eta_n|^{1-2H_1}\nonumber\\
&\times \prod_{i=1}^n\exp\Big(-\frac{(t-s_i)(s_i-s_{i-1})}{(t-s_{i-1})}\eta_i^2\Big).
\end{align}
Using the trivial inequality that $|a+b|^{1-2H_1}\leq |a|^{1-2H_1}+|b|^{1-2H_1}$ for all $H_1\in (0,\frac{1}{2})$ and $a,b\in \R$, we get
\begin{align}\label{gk12}
&\int_{\R^n}d\pmb{\eta}_n  \prod_{i=1}^{n-1}\Big|\eta_i-\frac{t-s_{i+1}}{t-s_{i}}\eta_{i+1}\Big|^{1-2H_1}|\eta_n|^{1-2H_1} \prod_{i=1}^n\exp\Big(-\frac{(t-s_i)(s_i-s_{i-1})}{(t-s_{i-1})}\eta_i^2\Big)\nonumber\\
\leq &\sum_{\bbeta_{n-1}=(\beta_1,\dots,\beta_{n-1})\in \{0,1\}^{n-1}}J_{\bbeta_{n-1}},
\end{align}
where
\begin{align*}
J_{\bbeta_n}:=&\int_{\R^{n}}d\pmb{\eta}_{n} \prod_{i=1}^{n-1}\Big(|\eta_i|^{\beta_i(1-2H_1)}\Big|\frac{t-s_{i+1}}{t-s_{i}}\eta_{i+1}\Big|^{(1-\beta_i)(1-2H_1)} \Big) \times |\eta_n|^{1-2H_1}\nonumber\\
&\times \prod_{i=1}^n\exp\Big(-\frac{(t-s_i)(s_i-s_{i-1})}{(t-s_{i-1})}\eta_i^2\Big)\nonumber\\
=&\int_{\R^{n}}d\pmb{\eta}_n  |\eta_{1}|^{\beta_{1}(1-2H_1)}\prod_{i=2}^{n-1}|\eta_i|^{(1-\beta_{i-1}+\beta_i)(1-2H_1)} |\eta_n|^{(2-\beta_{n-1})(1-2H_1)} \nonumber\\
&\times \prod_{i=1}^{n-1}\Big(\frac{t-s_{i+1}}{t-s_{i}}\Big)^{(1-\beta_i)(1-2H_1)}\prod_{i=1}^n\exp\Big(-\frac{(t-s_i)(s_i-s_{i-1})}{(t-s_{i-1})}\eta_i^2\Big).
\end{align*}
Fix $\bbeta_{n-1}\in \{0,1\}^{n-1}$. Then, we can show that
\begin{align*}
J_{\bbeta_n}\leq &c_1c_2^{n}\Big(\frac{(t-s)}{(t-s_{1})(s_{1}-s)}\Big)^{\frac{1}{2}+\frac{1}{2}\beta_{1}(1-2H_1)}\prod_{i=2}^{n-1}\Big(\frac{(t-s_{i-1})}{(t-s_i)(s_i-s_{i-1})}\Big)^{\frac{1}{2}+\frac{1}{2}(1-\beta_{i-1}+\beta_i)(1-2H_1)}\nonumber\\
&\times\Big(\frac{(t-s_{n-1})}{(t-s_n)(s_n-s_{n-1})}\Big)^{\frac{1}{2}+\frac{1}{2}(2-\beta_{n-1})(1-2H_1)}\prod_{i=2}^{n}\Big(\frac{t-s_{i}}{t-s_{i-1}}\Big)^{(1-\beta_{i-1})(1-2H_1)}.
\end{align*}
After simplification, we get
\begin{align}\label{gk13}
&J_{\bbeta_n}\leq c_1c_2^{n}(t-s)^{\frac{1}{2}+\frac{1}{2}\beta_{1}(1-2H_1)}(t-s_{1})^{\frac{1}{2}(\beta_{2}-1)(1-2H_1)}\prod_{i=2}^{n-1}(t-s_{i})^{\frac{1}{2}(\beta_{i+1} - \beta_{i-1})(1-2H_1)}\\
&\quad\times (t-s_n)^{-\frac{1}{2}+\frac{1}{2}\beta_{n-1}(1-2H_1)}(s_{1}-s)^{-\frac{1}{2} - \frac{1}{2}\beta_{1}(1-2H_1)}\prod_{i=2}^{n}(s_i-s_{i-1})^{-\frac{1}{2} - \frac{1}{2}(1-\beta_{i-1}+\beta_i)(1-2H_1)}.\nonumber
\end{align}
Recalling  that $s<s_{1}<\dots<s_n<t$, we can write
\begin{align}\label{gk14}
&(t-s)^{\frac{1}{2}+\frac{1}{2}\beta_{1}(1-2H_1)}(t-s_{1})^{\frac{1}{2}(\beta_{2}-1)(1-2H_1)}\nonumber\\
&\times \prod_{i=2}^{n-1}(t-s_{i})^{\frac{1}{2}(\beta_{i+1} - \beta_{i-1})(1-2H_1)}(t-s_n)^{-\frac{1}{2}+\frac{1}{2}\beta_{n-1}(1-2H_1)}\nonumber\\
= &(t-s)^{\frac{1}{2}}\prod_{i=1}^{n-1}\Big(\frac{t-s_{i-1}}{t-s_{i+1}}\Big)^{\frac{1}{2}\beta_{i}(1-2H_1)}(t-s_n)^{-\frac{1}{2}}\nonumber\\
\leq &(t-s)^{\frac{1}{2}}\prod_{i=1}^{n-1}\Big(\frac{t-s_{i-1}}{t-s_{i+1}}\Big)^{\frac{1}{2}(1-2H_1)}(t-s_n)^{-\frac{1}{2}}=\Big(\frac{t-s}{t-s_n}\Big)^{1-H_1}.
\end{align}
Therefore, combining \eqref{gk11} - \eqref{gk14}, we have
\begin{align*}
\|g_n^2(\bs_n, \bullet,s,y,&t,x)\|_{\cH_1^{\otimes{n}}}^2\leq c_1c_2^{n}p_{t-s}(x-z)^2(t-s)^{1-H_1}(t-s_n)^{H_1-1}\nonumber\\
&\times \sum_{\bbeta_{n-1}\in \{0,1\}^{n-1}}(s_{1}-s)^{-\frac{1}{2} - \frac{1}{2}\beta_{1}(1-2H_1)}\prod_{i=2}^{n}(s_i-s_{i-1})^{-\frac{1}{2} - \frac{1}{2}(1-\beta_{i-1}+\beta_i)(1-2H_1)}.
\end{align*}
To estimate the time integral of $\|g_n^2(\bs_n, \bullet,s,y,t,x)\|_{\cH_1^{\otimes{n}}}^2$, we cannot use  \cite[Lemma 4.5]{HHNT15} directly. But following the same idea, we also find that
\begin{align}\label{gk1} 
&\int_{[s,t]^n}d\bs_{n}\|g_n^2(\bs_{n}, \bullet,s,y,t,x)\|_{\cH_1^{\otimes{n}}}^{\frac{1}{H_0}}\nonumber\\
\leq &c_1c_2^{n}n!p_{t-s}(x-y)^{\frac{1}{H_0}}(t-s)^{\frac{2H_0+H_1-1}{2H_0}n}\Gamma\Big(\frac{2H_0+H_1-1}{2H_0}n+\frac{2H_0+H_1-1}{2H_0}\Big)^{-1}.
\end{align}
Thus, inequality \eqref{gn21} follows from inequality \eqref{gk1} and Corollary \ref{coro_gmm}.

{\bf Step 2.} Fix $s<s_1<\dots<s_n<t$. Let 
\[
J_n:=\|g_n^{2}(\bs_{n}, \bullet,s,y+y',t,x)-g_n^{2}(\bs_n, \bullet,s,y,t,x)\|_{\cH_1^{\otimes{n}}}^2.
\]
Taking into account  formula \eqref{hgk1}, we obtain the next  equality analogously to \eqref{gk11},
\begin{align}\label{dgnk1}
J_n=&c_{H_1}^{n}\int_{\R^{n}}d\pmb{\eta}_{n}\bigg|p_{t-s}(x-y-y')\exp\Big(-i(y+y')\frac{t-s_{1}}{t-s}\eta_{1}\Big)\nonumber\\
&\quad-p_{t-s}(x-y)\exp\Big(-iy\frac{t-s_{1}}{t-s}\eta_{1}\Big)\bigg|^2\prod_{i=1}^{n-1}|\eta_i-\frac{t-s_{i+1}}{t-s_{i}}\eta_{i+1}|^{1-2H_1}|\eta_n|^{1-2H_1} \nonumber\\
&\times\prod_{i=1}^n\exp\Big(-\frac{(t-s_i)(s_i-s_{i-1})}{(t-s_{i-1})}\eta_i^2\Big)\leq  c_1c_2^{n} (G_1+G_2),
\end{align}
where
\begin{align}\label{g1}
G_1=&|p_{t-s}(x-y-y')-p_{t-s}(x-y)|^{2}\int_{\R^{n}}d\pmb{\eta}_{n}\prod_{i=1}^{n-1}|\eta_i-\frac{t-s_{i+1}}{t-s_{i}}\eta_{i+1}|^{1-2H_1}|\eta_n|^{1-2H_1} \nonumber\\
&\times   \prod_{i=1}^n\exp\Big(-\frac{(t-s_i)(s_i-s_{i-1})}{(t-s_{i-1})}\eta_i^2\Big),
\end{align}
and
\begin{align}\label{g2}
G_2=&p_{t-s}(x-y)^2\int_{\R^{n}}d\pmb{\eta}_{n}\Big|\exp\Big(-i(y+y')\frac{t-s_{1}}{t-s}\eta_{1}\Big)-\exp\Big(-iy\frac{t-s_{1}}{t-s}\eta_{1}\Big)\Big|^2\nonumber\\
&\times  \prod_{i=1}^{n-1}\Big|\eta_i-\frac{t-s_{i+1}}{t-s_{i}}\eta_{i+1}\Big|^{1-2H_1} |\eta_n|^{1-2H_1} \prod_{i=1}^n\exp\Big(-\frac{(t-s_i)(s_i-s_{i-1})}{(t-s_{i-1})}\eta_i^2\Big).
\end{align}
Using inequalities \eqref{gk11} and \eqref{gk1}, and Corollary \ref{coro_gmm}, we can write
\begin{align}\label{ing1}
&\Big(n!\int_{\dT_n^{s,t}}d\bs_n G_1^{\frac{1}{2H_0}}\Big)^{2H_0}\leq C_2(n,t-s)\big(p_{t-s}(x-y-y')-p_{t-s}(x-y)\big)^{2}.
\end{align}
To estimate $G_2$, we apply inequality \eqref{eixz} and get
\begin{align*}
G_2\leq &p_{t-s}(x-y)^2\int_{\R^{n}}d\pmb{\eta}_{n}\Big(|y'|^2\Big|\frac{t-s_{1}}{t-s}\eta_{1}\Big|^2\1_{ \{|\frac{t-s_{1}}{t-s}\eta_{1}y'|\leq	 1\}}+2\times \1_{\{|\frac{t-s_{1}}{t-s}\eta_{1}y'|>1\}}\Big)\nonumber\\
&\times  \prod_{i=1}^{n-1}\Big|\eta_i-\frac{t-s_{i+1}}{t-s_{i}}\eta_{i+1}\Big|^{1-2H_1} |\eta_n|^{1-2H_1} \prod_{i=1}^n\exp\Big(-\frac{(t-s_i)(s_i-s_{i-1})}{(t-s_{i-1})}\eta_i^2\Big)
\end{align*}
Using the same idea as in \eqref{gk12} and \eqref{gk13}, we  deduce that
\begin{align}\label{dg2}
G_2\leq c_1c_2^{n} p_{t-s}(x-y)^2(G_{21}+G_{22}),
\end{align}
where
\begin{align*}
G_{21}=&|y'|^2\sum_{\bbeta_{n-1}\in \{0,1\}^{n-1}}\Big[\int_{\R}d\eta_1|\eta_{1}|^{2+\beta_1(1-2H_1)}\1_{ \{|\frac{t-s_{1}}{t-s}\eta_{1}y'|\leq 1\}}\exp\Big(-\frac{(t-s_1)(s_1-s)}{t-s}\eta_1^2\Big)\Big]\nonumber\\
&\times (t-s)^{-2}(t-s_{1})^{\frac{5}{2}+\frac{1}{2}(\beta_1+\beta_{2}-1)(1-2H_1)}\prod_{i=2}^{n-1}(t-s_{i})^{\frac{1}{2}(\beta_{i+1} - \beta_{i-1})(1-2H_1)}\\
&\times (t-s_n)^{-\frac{1}{2}+\frac{1}{2}\beta_{n-1}(1-2H_1)}\prod_{i=2}^{n}(s_i-s_{i-1})^{-\frac{1}{2} - \frac{1}{2}(1-\beta_{i-1}+\beta_i)(1-2H_1)}
\end{align*}
and
\begin{align*}
G_{22}=&\sum_{\bbeta_{n-1}\in \{0,1\}^{n-1}}\Big[\int_{\R}d\eta_1|\eta_{1}|^{\beta_1(1-2H_1)}\1_{ \{|\frac{t-s_{1}}{t-s}\eta_{1}y'|> 1\}}\exp\Big(-\frac{(t-s_1)(s_1-s)}{t-s}\eta_1^2\Big)\Big]\nonumber\\
&\times (t-s)^{-2}(t-s_{1})^{\frac{5}{2}+\frac{1}{2}(\beta_1+\beta_{2}-1)(1-2H_1)}\prod_{i=2}^{n-1}(t-s_{i})^{\frac{1}{2}(\beta_{i+1} - \beta_{i-1})(1-2H_1)}\\
&\times (t-s_n)^{-\frac{1}{2}+\frac{1}{2}\beta_{n-1}(1-2H_1)}\prod_{i=2}^{n}(s_i-s_{i-1})^{-\frac{1}{2} - \frac{1}{2}(1-\beta_{i-1}+\beta_i)(1-2H_1)}.
\end{align*}
Preforming the change of variable $(\frac{(t-s_1)(s_1-s)}{t-s})^{\frac{1}{2}}\eta_1=\eta$, we can show that
\begin{align*}
&\int_{\R}d\eta_1|\eta_{1}|^{2+\beta_1(1-2H_1)}\1_{ \{|\frac{t-s_{1}}{t-s}\eta_{1}y'|\leq 1\}}\exp\Big(-\frac{(t-s_1)(s_1-s)}{t-s}\eta_1^2\Big)\\
=&2\Big(\frac{t-s}{(t-s_1)(s_1-s)}\Big)^{\frac{3}{2}+\frac{1}{2}\beta_1(1-2H_1)}\int_0^{\big(\frac{(t-s)(s_1-s)}{t-s_1}\big)^{\frac{1}{2}}|y'|^{-1}}d\eta|\eta|^{2+\beta_1(1-2H_1)}e^{-\eta^2}\\
\leq &2\Big(\frac{t-s}{(t-s_1)(s_1-s)}\Big)^{\frac{3}{2}+\frac{1}{2}\beta_1(1-2H_1)}\Big(\int_0^{\infty}d\eta|\eta|^{2+\beta_1(1-2H_1)}e^{-\eta^2}\1_{\{|y'|\leq \sqrt{s-s_1}\}}\\
&+\int_0^{\big(\frac{(t-s)(s_1-s)}{t-s_1}\big)^{\frac{1}{2}}|y'|^{-1}}d\eta|\eta|^{2+\beta_1(1-2H_1)}\1_{\{|y'|>\sqrt{s_1-s}\}}\Big)\\
\leq &c_1\Big[\Big(\frac{t-s}{t-s_1}\Big)^{\frac{3}{2}+\frac{1}{2}\beta_1(1-2H_1)}(s_1-s)^{-\frac{3}{2} - \frac{1}{2}\beta_1(1-2H_1)}\1_{\{|y'|\leq \sqrt{s_1-s}\}}\\
&+\Big(\frac{t-s}{t-s_1}\Big)^{3+\beta_1(1-2H_1)}|y'|^{-3-\beta_1(1-2H_1)}\1_{\{|y'|>\sqrt{s_1-s}\}}\Big].
\end{align*}
It follows that
$
G_{21}\leq c_1c_2^n (G_{21}^1+G_{21}^2),
$
where
\begin{align*}
G_{21}^1= &\sum_{\bbeta_{n-1}\in \{0,1\}^{n-1}}|y'|^2(s_1-s)^{-\frac{3}{2} - \frac{1}{2}\beta_1(1-2H_1)}\1_{\{|y'|\leq \sqrt{s_1-s}\}}(t-s)^{-\frac{1}{2}+\frac{1}{2}\beta_1(1-2H_1)}\\
&\times (t-s_{1})^{1+\frac{1}{2}(\beta_{2}-1)(1-2H_1)}\prod_{i=2}^{n-1}(t-s_{i})^{\frac{1}{2}(\beta_{i+1} - \beta_{i-1})(1-2H_1)} (t-s_n)^{-\frac{1}{2}+\frac{1}{2}\beta_{n-1}(1-2H_1)}\\
&\times\prod_{i=2}^{n}(s_i-s_{i-1})^{-\frac{1}{2} - \frac{1}{2}(1-\beta_{i-1}+\beta_i)(1-2H_1)}
\end{align*}
and 
\begin{align*}
G_{21}^2=&\sum_{\bbeta_{n-1}\in \{0,1\}^{n-1}}|y'|^{-1-\beta_1(1-2H_1)}\1_{\{|y'|>\sqrt{s_1-s}\}}(t-s)^{1+\beta_1(1-2H_1)}\nonumber\\
&\times (t-s_{1})^{-\frac{1}{2}+\frac{1}{2}(\beta_{2} - \beta_1-1)(1-2H_1)}\prod_{i=2}^{n-1}(t-s_{i})^{\frac{1}{2}(\beta_{i+1} - \beta_{i-1})(1-2H_1)}\\
&\times (t-s_n)^{-\frac{1}{2}+\frac{1}{2}\beta_{n-1}(1-2H_1)}\prod_{i=2}^{n}(s_i-s_{i-1})^{-\frac{1}{2} - \frac{1}{2}(1-\beta_{i-1}+\beta_i)(1-2H_1)}.
\end{align*}
Notice that on the set $\{|y'|\leq \sqrt{s_1-s}\}$, 
\[
|y'|^2(s_1-s)^{-1-\frac{1}{2}\beta_1(1-2H_1)}\leq |y'|^{2H_0-\frac{1}{2}}(s_1-s)^{-\frac{1}{4} - \frac{1}{2}\beta_1(1-2H_1)-H_0}.
\]
Combining this fact with inequalities \eqref{gk14} and \eqref{gk1}, and Corollary \ref{coro_gmm}, we get
\begin{align}
\Big(n!\int_{\dT_n^{s,t}}d\bs_n\big|G_{21}^1\big|^\frac{1}{2H_0}&\Big)^{2H_0}\leq c_1c_2^{n}\sum_{\beta_1\in \{0,1\}}|y'|^{2H_0-\frac{1}{2}}\bigg[\int_{[s,t]^n}d\bs_n\Big((s_1-s)^{-\frac{1}{4} - \frac{1}{2}\beta_1(1-2H_1)-H_0}\nonumber\\
&\times \frac{t-s_1}{t-s}\prod_{i=2}^{n}(s_i-s_{i-1})^{-\frac{1}{2} - \frac{1}{2}(1-\beta_{i-1}+\beta_i)(1-2H_1)}\Big)^{\frac{1}{2H_0}}\bigg]^{2H_0}\1_{\{|y'|\leq \sqrt{t-s}\}}\nonumber\\
\leq &C_2(n,t-s)(t-s)^{\frac{1}{4}-H_0}|y'|^{2H_0-\frac{1}{2}}\1_{\{|y'|\leq \sqrt{t-s}\}}.
\end{align}
Following the same arguments, we can also deduce that 
\begin{align}
\Big(n!\int_{\dT_n^{s,t}}d\bs_n&\big|G_{21}^2\big|^\frac{1}{2H_0}\Big)^{2H_0}\leq C_2(n,t-s) \sum_{\beta_1\in\{0,1\}}(t-s)^{\frac{1}{2}-2H_0+\frac{1}{2}\beta_1(1-2H_1)}|y'|^{-1-\beta_1(1-2H_1)}\nonumber\\
&\qquad\times [(t-s)\wedge |y'|^2]^{2H_0}\nonumber\\
\leq &C_2(n,t-s)\big((t-s)^{\frac{1}{4}-H_0}|y'|^{2H_0-\frac{1}{2}}\1_{\{|y'|\leq \sqrt{t-s}\}}+\1_{\{|y'|>\sqrt{t-s}\}}\big)
\end{align}
and
\begin{align}\label{ing22}
\Big(n!\int_{\dT_n^{s,t}}d\bs_n G_{22}^{\frac{1}{2H_0}}\Big)^{2H_0}\leq C_2(n,t-s)\big((t-s)^{\frac{1}{4}-H_0}|y'|^{2H_0-\frac{1}{2}}\1_{\{|y'|\leq \sqrt{t-s}\}}+\1_{\{|y'|>\sqrt{t-s}\}}\big).
\end{align}
Therefore, inequality \eqref{gn22} is a consequence of inequalities \eqref{dgnk1} and \eqref{ing1} - \eqref{ing22}.

Inequality \eqref{gn23} is just another version of \eqref{gn22}, if one make the change of variable $s_i=t+s-u_i$ for all $i=1,\dots, n$. Thus we can conclude that \eqref{gn23} holds  true.

{\bf Step 3.} Due to formulas \eqref{hgk1} and \eqref{gk11}, we can write
\begin{align}\label{d4gn}
&\big\| g_n^2(\bs_n, \bullet, s,y+y',t,x+x')-g_n^2(\bs_n, \bullet,s,y,t,x+x')\nonumber\\
&-g_n^2(\bs_n, \bullet,s,y+y',t,x)+g_n^2(\bs_n, \bullet,s,y,t,x)\big\|_{\cH_1^n}^2\nonumber\\
= &c_{H_1}^n\int_{\R^n}d\pmb{\eta}_n\big|h(x+x',y+y')-h(x+x',y)-h(x,y+y')+h(x,y)\big|^2\nonumber\\
&\times   \prod_{i=1}^{n-1}|\eta_i-\frac{t-s_{i+1}}{t-s_{i}}\eta_{i+1}|^{1-2H_1}|\eta_{n}|^{1-2H_1} \prod_{i=1}^{n}\exp\Big(-\frac{(t-s_i)(s_i-s_{i-1})}{(t-s_{i-1})}\eta_i^2\Big),
\end{align}
where
$
h(x,y)=h_1(x,y)h_2(y)h_3(x),
$
with
\begin{align*}
h_1(x,y)=p_{t-s}(x-y),\quad h_2(y)=\exp\left(-iy\frac{t-s_{1}}{t-s}\eta_{1}\right)
\end{align*}
and
\begin{align*} 
h_3(x)= \exp\left(-ix\sum_{j=1}^{n}\frac{s_{j}-s_{j-1}}{t-s_{j-1}}\eta_j\right).
\end{align*}
Notice that we can bound the rectangular increment as follows:
\begin{align*}
&\big|h(x+x',y+y')-h(x+x',y)-h(x,y+y')+h(x,y)\big|\\
\leq &|h_1(x+x',y+y')-h_1(x+x',y)-h_1(x,y+y')+h_1(x,y)||h_2(y+y')||h_3(x+x')|\\
&+|h_1(x+x',y)-h_1(x,y)||h_2(y+y')-h_2(y)||h_3(x+x')|\\
&+|h_1(x,y+y')-h(x,y)||h_2(y+y')||h_3(x+x')-h_3(x)|\\
&+|h_1(x,y)||h_2(y+y')-h_2(y)||h_3(x+x')-h_3(x)|  :=\hbar_1+\hbar_2+\hbar_3+\hbar_4.
\end{align*}
Following similar arguments as in Step 2, we can estimate the expressions
\begin{align*}
&\bigg[\int_{\dT_n^{s,t}}d\bs_n\bigg(\int_{\R^n}d\pmb{\eta}_{n} \hbar_k^2  \prod_{i=1}^{n-1}|\eta_i-\frac{t-s_{i+1}}{t-s_{i}}\eta_{i+1}|^{1-2H_1}|\eta_n|^{1-2H_1}\nonumber\\
&\times  \prod_{i=1}^{k_2}\exp\Big(-\frac{(t-s_i)(s_i-s_{i-1})}{(t-s_{i-1})}\eta_i^2\Big)\bigg)^{\frac{1}{2H_0}}\bigg]^{2H_0}
\end{align*}
for all $k=1,\dots, 4$ and obtain inequality \eqref{gn24}. The proof of this lemma is completed.
\end{proof}

\subsubsection{Proof of Lemmas \ref{produ} and \ref{prod2u}}\label{ss.pfp1}
Having Lemmas \ref{lmgn1} and \ref{lmgn2}, we are ready to present the proof of Lemmas \ref{produ} and \ref{prod2u}. By \cite[Proposition 1.2.7]{Nualart06}, we can write the chaos expansion for the Malliavin derivatives of $u$ as follows. Fix $(t,x)\in \R_+\times \R$, then for all $s,s_1,s_2\in [s,t]$ and $y,y_1,y_2\in \R$,
\begin{align}\label{cosd1}
D_{s,y}u(t,x)=\sum_{n=0}^{\infty}I_n\big((n+1)f_{t,x,n+1}(\bullet,s,y)\big)
\end{align}
and
\begin{align}\label{cosd2}
D^2_{r,z,s,y}u(t,x)=\sum_{n=0}^{\infty}I_n\big((n+2)(n+1)f_{t,x,n+2}(\bullet,r,s,z,y)\big),
\end{align} 
where $f_{t,x,n}$ is defined as in \eqref{cos2}.
\begin{proof}[Proof of Lemma \ref{produ}]
Suppose that $p=2$. By the chaos expansion \eqref{cosd1} of $D_{s,y}u(t,x)$, in order to prove inequality \eqref{du},  we need to estimate the following expression
\[
\|I_n((n+1)f_{t,x,n+1}(\bullet,s,y))\|_2^2=(n+1)^2n!\|f_{t,x,n+1}(\bullet,s,y)\|_{\kH^{\otimes n}}^2.
\]
Due to the embedding inequality \eqref{emdn}, we know that
\begin{align*}
\|f_{t,x,n+1}(\bullet, s,y)\|_{\kH^{\otimes n}}^2\leq c_{H_0}^n\Big(\int_{[0,t]^n}d\bs_n\|f_{t,x,n+1}(\bs_n,\bullet,s,y)\|_{\cH_1^{\otimes n}}^\frac{1}{H_0}\Big)^{2H_0}.
\end{align*}
Notice that we can decompose the integral region in time as  follows,
\begin{align*}
[0,t]^n=\bigcup_{k=0}^n\{\bs_n\in [0,t]^n, 0<s_{\sigma(1)}<\dots <s_{\sigma(k)}<s<s_{\sigma(k+1)}<\dots s_{\sigma(n)}<t\}\bigcup \cN,
\end{align*}
where $\cN$ is a subset included in $[0,t]^n$ of zero Lebesgue measure. On the other hand, freezing  $0<s_1<s_{k}<s<s_{k+1}\dots<s_n<t$, we have
\begin{align}\label{ftxn+1}
f_{t,x,n+1}(\bs_n,\by_n,s,y)=&\frac{1}{(n+1)!}g_{0, s,y,k}^{1}(\bs_k, \by_k)g_{s,y,t,z,n-k}^2(\bs_{k:n},\by_{k:n})
\end{align}
where $g^1_{k}$ and $g^2_{n-k}$ are defined as in \eqref{gn1} and \eqref{gn2}. If follows that
\begin{align*}
\|f_{t,x,n+1}&(\bullet,s,y)\|_{\kH^{\otimes n}}^2\leq  c_{H_0}^n\Big(\sum_{k=0}^n\binom{n}{k}\int_{[0,r]^n}d\bs_k\int_{[r,t]^{n-k}}d\bs_{k:n}\|f_{t,x,n+1}(\bs_n,\bullet,s,y)\|_{\cH_1^{\otimes n}}^\frac{1}{H_0}\Big)^{2H_0}\\
\leq &\frac{c_1c_2^n}{[(n+1)!]^2}\sum_{k=0}^n{\binom{n}{k}}^{2H_0}\Big(\int_{[0,s]^k}d\bs_k\|g^1_{0,s,y,k}(\bs_{k},\bullet)\|_{\cH_1^{\otimes k}}^\frac{1}{H_0}\Big)^{2H_0}\\
& \times \Big(\int_{[s,t]^{n-k}}d\bs_{k:n}\|g^2_{s,y,t,x,n-k}(\bs_{k:n},\bullet)\|_{\cH_1^{\otimes (n-k)}}^\frac{1}{H_0}\Big)^{2H_0}
\end{align*}
As a consequence of Lemmas \ref{lmgn1} and \ref{lmgn2} and Corollary \ref{coro_gmm}, we deduce that 
\begin{align}\label{ngn}
(n+1)^2n!\|f_{t,x,n+1}(\bullet,s,y)\|_{\kH^{\otimes n}}^2\leq \frac{c_1c_2^nt^{(2H_0+H_1-1)n}p_{t-s}(x-y)^2}{\Gamma(H_1n+1)}.
\end{align}
Finally, it follows from the asymptotic bound of the Mittag-Leffler function (see Lemma \ref{lmm_gmm}-(ii)) that
\begin{align*}
\|D_{r,z}u(t,x)\|_2^2=&\sum_{n=0}^{\infty}\E \big|I_n\big((n+1)f_{t,x,n+1}(\bullet,r,z)\big)\big|^2=\sum_{n=0}^{\infty}(n+1)^2n!\|f_{t,x,n+1}(\bullet,s,y)\|_{\kH^{\otimes n}}^2\\
\leq&  p_{t-r}(x-z)^2\sum_{n=0}^{\infty}\frac{c_1c_2^n t^{(2H_0+H_1-1)n}}{\Gamma(H_1n+1)} \leq c_1\exp\Big(c_2 t^{\frac{2H_0+H_1-1}{2H_1}}\Big)p_{t-r}(x-z)^2.
\end{align*}
This proves inequality \eqref{du} in the case $p=2$. For $p>2$, using \eqref{hyper_ineq}, we can write
\begin{align*}
\|D_{r,z}&u(t,x)\|_p^2\leq \Big(\sum_{n=0}^{\infty}\big\|I_n\big((n+1)f_{t,x,n+1}(\bullet,r,z)\big)\big\|_p\Big)^2\\
\leq &\Big(\sum_{n=0}^{\infty}(p-1)^{\frac{n}{2}}\big\|I_n\big((n+1)f_{t,x,n+1}(\bullet,r,z)\big)\big\|_2\Big)^2\\
\leq & \Big(c_1p_{t-s}(x-y)\sum_{n=0}^{\infty}\frac{c_2^n(p-1)^{\frac{n}{2}}t^{\frac{2H_0+H_1-1}{2}n}}{\Gamma(H_1n+1)^{\frac{1}{2}}}\Big)^2\leq C_1(t)p_{t-s}(x-y)^2.
\end{align*}
The proof of inequality \eqref{du} is completed. 

In the next step, we provide the proof of inequality \eqref{ddu}. Firstly, by chaos expansion \eqref{cosd1} and embedding inequality \eqref{emdn}, we have
\begin{align}\label{djn0}
&\|D_{s,y+y'}u(t,x)-D_{s,y}u(t,x)\|_2^2\nonumber\\
\leq &\sum_{n=0}^{\infty}\frac{c_1c_2^n}{n!}\sum_{k=0}^n {\binom{n}{k}}^{2H_0}\bigg[\Big(\int_{[s,t]^{n-k}}d\bs_{k:n}\|g^2_{s,y,t,x,n-k}(\bs_{k:n},\bullet)\|_{\cH_1^{\otimes (n-k)}}^{\frac{1}{H_0}}\nonumber\\
&\times \int_{[0,s]^k}d\bs_k\|g_{0,s,y+y',k}^1(\bs_{k}, \bullet)-g_{0,s,y,k}^1(\bs_{k}, \bullet)\|_{\cH^{\otimes k}}^{\frac{1}{H_0}}\Big)^{2H_0}\nonumber\\
&+\Big(\int_{[0,s]^{n-k}}d\bs_{k:n}\|g_{s,y+y',t,x,n-k}^2(\bs_{k:n}, \bullet)-g_{s,y,t,x,n-k}^2(\bs_{k:n}, \bullet)\|_{\cH_1^{\otimes (n-k)}}^{\frac{1}{H_0}}\Big)^{2H_0}\nonumber\\
&+\Big(\int_{[0,s]^k}d\bs_k\sup_{z\in \R}\|g^{1}_{0,s,z,k}(\bs_{k},\bullet)\|_{\cH_1^{\otimes k}}^{\frac{1}{H_0}}\Big)^{2H_0}\bigg].
\end{align}
As a consequence of Lemmas \ref{lmgn1}, \ref{lmgn2} and \ref{lmm_gmm} (ii), we obtain inequality \eqref{ddu} for $p=2$ and thus for all $p\geq 2$ due to inequality \eqref{hyper_ineq}. The proof of this lemma is completed.
\end{proof}

\begin{proof}[Proof of Lemma \ref{prod2u}]
It suffices to show this lemma for $p=2$. Denote by $LHS$ the left hand side of \eqref{dd2u}. Then, by the chaos expansion \eqref{cosd2}, we get the following inequality, in the same way as for  \eqref{djn0},
\begin{align}\label{dd2u1}
LHS\leq &\sum_{n=0}^{\infty}\frac{c_1c_2^n}{n!}\sum_{k_2=0}^n\sum_{k_1=0}^{k_2} {\binom{n}{k_2}}^{2H_0}{\binom{k_2}{k_1}}^{2H_0}\\
&\times\Big(\int_{[0,r]^{k_1}}d\bs_{k_1}\int_{[r,s]^{k_2-k_1}}d\bs_{k_1:k_2}\int_{[s,t]^{n-k_2}}d\bs_{k_2:n}(K_1+K_2+K_3+K_4)^{\frac{1}{H_0}}\Big)^{2H_0},\nonumber
\end{align}
where
\begin{align*}
K_1 =&\|g_{s,y,t,x,n-k_2}^2(\bs_{k_2:n},\bullet)\|_{\cH_1^{\otimes (n-k_2)}}\|g_{0,r,z+z',k_1}^{1}(\bs_{k_1},\bullet)-g_{0,r,z,k_1}^1(\bs_{k_1},\bullet)\|_{\cH_1^{\otimes k_1}}\nonumber\\
&\times\| g_{r,z,s,y+y',k_2-k_1}^2(\bs_{k_1:k_2}, \bullet)-g_{r,z,s,y,k_2-k_1}^2(\bs_{k_1:k_2}, \bullet)\|_{\cH_1^{\otimes (k_2-k_1)}},
\end{align*}
\begin{align*}
K_2=&\|g_{s,y,t,x,n-k_2}^2(\bs_{k_2:n},\bullet)\|_{\cH_1^{\otimes (n-k_2)}}\|g_{0,r,z+z',k_1}^{1}(\bs_{k_1},\bullet)\|_{\cH_1^{\otimes k_1}}\nonumber\\
&\times \| g_{r,z+z',s,y+y',k_2-k_1}^2(\bs_{k_1:k_2}, \bullet)-g_{r,z+z',s,y,k_2-k_1}^2(\bs_{k_1:k_2}, \bullet)\nonumber\\
&\quad-g_{r,z,s,y+y',k_2-k_1}^2(\bs_{k_1:k_2}, \bullet)+g_{r,z,s,y,k_2-k_1}^2(\bs_{k_1:k_2}, \bullet)\|_{\cH_1^{\otimes (k_2-k_1)}},
\end{align*}
\begin{align*}
K_3=& \|g_{s,y+y',t,x,n-k_2}^2(\bs_{k_2:n},\bullet)-g_{s,y,t,x,n-k_2}^2(\bs_{k_2:n},\bullet)\|_{\cH_1^{\otimes (n-k_2)}}\\
&\times \| g_{r,z,s,y+y',k_2-k_1}^2(\bs_{k_1:k_2}, \bullet)\|_{\cH_1^{\otimes (k_2-k_1)}}\|g_{0,r,z+z',k_1}^{1}(\bs_{k_1},\bullet)-g_{0,r,z,k_1}^{1}(\bs_{k_1},\bullet)\|_{\cH_1^{\otimes k_1}},\nonumber
\end{align*}
and
\begin{align*}
K_4=& \|g_{s,y+y',t,x,n-k_2}^2(\bs_{k_2:n},\bullet)-g_{s,y,t,x,n-k_2}^2(\bs_{k_2:n},\bullet)\|_{\cH_1^{\otimes (n-k_2)}} \|g_{0,r,z+z',k_1}^{1}(\bs_{k_1},\bullet)\|_{\cH_1^{\otimes k_1}}\nonumber\\
&\times \| g_{r,z+z',s,y+y',k_2-k_1}^2(\bs_{k_1:k_2}, \bullet)- g_{r,z,s,y+y',k_2-k_1}^2(\bs_{k_1:k_2}, \bullet)\|_{\cH_1^{\otimes (k_2-k_1)}}.
\end{align*}
By Lemmas \ref{lmgn1} - \ref{lmgn2} and Corollary \ref{coro_gmm}, we get
\begin{align}
&\Big(\int_{[0,r]^{k_1}}d\bs_{k_1}\int_{[r,s]^{k_2-k_1}}d\bs_{k_1:k_2}\int_{[s,t]^{n-k_2}}d\bs_{k_2:n}K_1^{\frac{1}{H_0}}\Big)^{2H_0}\nonumber\\
\leq & C_2(n,t)p_{t-s}(x-y)^2N_r(z')^2 \big(\big|\Delta_{s-r}(y-z,y')\big|+p_{s-r}(y-z)N_{s-r}(y')\big)^2,
\end{align}
\begin{align}
&\Big(\int_{[0,r]^{k_1}}d\bs_{k_1}\int_{[r,s]^{k_2-k_1}}d\bs_{k_1:k_2}\int_{[s,t]^{n-k_2}}d\bs_{k_2:n}K_2^{\frac{1}{H_0}}\Big)^{2H_0}\nonumber\\
\leq & C_2(n,t)p_{t-s}(x-y)^2 \big(\big|R_{s-r}(y-z,y',z')\big|+\big|\Delta_{s-r}(y-z,y')\big| N_{s-r}(z')\nonumber\\
&+\big|\Delta_{s-r}(z-y,z')\big|N_{s-r}(y')+p_{s-r}(y-z)N_{s-r}(y')N_{s-r}(z')\big)^2,
\end{align}
\begin{align}
&\Big(\int_{[0,r]^{k_1}}d\bs_{k_1}\int_{[r,s]^{k_2-k_1}}d\bs_{k_1:k_2}\int_{[s,t]^{n-k_2}}d\bs_{k_2:n}K_3^{\frac{1}{H_0}}\Big)^{2H_0}\nonumber\\
\leq & C_2(n,t)p_{s-r}(y+y'-z)^2 N_r(z')\big|\big(\big|\Delta_{t-s}(y-x,y')\big|+p_{t-s}(x-y)^2 N_{t-s}( y')\big)^2
\end{align}
and
\begin{align}\label{k4}
&\Big(\int_{[0,r]^{k_1}}d\bs_{k_1}\int_{[r,s]^{k_2-k_1}}d\bs_{k_1:k_2}\int_{[s,t]^{n-k_2}}d\bs_{k_2:n}K_4^{\frac{1}{H_0}}\Big)^{2H_0}\nonumber\\
\leq & C_2(n,t) \big(\big|\Delta_{s-r}( z-y-y',z')\big|+p_{s-r}(y+y'-z) N_{s-r}(z')\big)^2\nonumber\\
&\times\big(\big|\Delta_{t-s}( y-x,y')\big|+p_{t-s}(x-y)N_{t-s}( y')\big)^2.
\end{align}
Therefore, inequality \eqref{dd2u} is a consequence of inequalities \eqref{dd2u1} - \eqref{k4}, Lemma \ref{lmm_gmm} and Corollary \ref{coro_gmm}. This completes the proof of Lemma \ref{prod2u}. 
\end{proof}

\end{document}